\newcommand\R{\mathbb R}
\newcommand\C{\mathcal C}
\newcommand\K{\mathcal K}
\renewcommand\O{\mathcal O}
\newcommand\T{\mathcal T}
\newcommand\E{\mathcal E}
\newcommand\M{\mathcal M}
\newcommand\N{\mathcal N}
\newcommand\V{\mathcal V}
\renewcommand\H{\mathcal H}
\renewcommand\div{\operatorname{div}}
\newcommand\eps{\operatorname{\epsilon}}
\newcommand\x{\times}
\renewcommand\t{\tilde}
\newcommand\lbra{[\![}
\newcommand\rbra{]\!]}
\newcommand\lbrac{\,[\!\!\!\{}
\newcommand\rbrac{\}\!\!\!]\,}
\renewcommand\ll{|\kern-2pt|\kern-2pt|}
\newcommand\e{^\epsilon}
\numberwithin{equation}{section}
\theoremstyle{plain}
\newtheorem{thm}{Theorem}
\newtheorem{lem}[thm]{Lemma}
\numberwithin{thm}{section}
\theoremstyle{remark}
\def\underput#1#2#3{
\mathchoice
{\vtop{\ialign{##\crcr\hfil$#2$\vrule width0pt height0pt depth#3\hfil\crcr
\noalign{\nointerlineskip}\hfil$\scriptstyle#1$\hfil\crcr}}}
{\vtop{\ialign{##\crcr\hfil$#2$\vrule width0pt height0pt depth#3\hfil\crcr
\noalign{\nointerlineskip}\hfil$\scriptstyle#1$\hfil\crcr}}}
{\vtop{\ialign{##\crcr\hfil$\scriptstyle#2$\vrule width0pt height0pt
depth#3\hfil\crcr
\noalign{\nointerlineskip}\hfil$\scriptscriptstyle#1$\hfil\crcr}}}
{\vtop{\ialign{##\crcr\hfil$\scriptscriptstyle#2$\vrule width0pt height0pt
depth#3\hfil\crcr
\noalign{\nointerlineskip}\hfil$\scriptscriptstyle#1$\hfil\crcr}}}}
\def\stack#1#2#3{\rlap{#1}\lower#3\hbox{#2}}
\def\twiddlespace{1.2truept}
\def\dtwiddle{\displaystyle\sim}
\def\ttwiddle{\textstyle\sim}
\def\stwiddle{\scriptstyle\sim}
\def\sstwiddle{\scriptscriptstyle\sim}
\def\doubledtwiddle{\stack{$\dtwiddle$}{$\dtwiddle$}{\twiddlespace}}
\def\doublettwiddle{\stack{$\ttwiddle$}{$\ttwiddle$}{\twiddlespace}}
\def\doublestwiddle{\stack{$\stwiddle$}{$\stwiddle$}{\twiddlespace}}
\def\doublesstwiddle{\stack{$\sstwiddle$}{$\sstwiddle$}{\twiddlespace}}
\def\tripledtwiddle{\stack{$\dtwiddle$}{$\doubledtwiddle$}{\twiddlespace}}
\def\triplettwiddle{\stack{$\ttwiddle$}{$\doublettwiddle$}{\twiddlespace}}
\def\triplestwiddle{\stack{$\stwiddle$}{$\doublestwiddle$}{\twiddlespace}}
\def\triplesstwiddle{\stack{$\sstwiddle$}{$\doublesstwiddle$}{\twiddlespace}}
\def\quadrupledtwiddle{\stack{$\dtwiddle$}{$\tripledtwiddle$}{\twiddlespace}}
\def\quadruplettwiddle{\stack{$\ttwiddle$}{$\triplettwiddle$}{\twiddlespace}}
\def\quadruplestwiddle{\stack{$\stwiddle$}{$\triplestwiddle$}{\twiddlespace}}
\def\quadruplesstwiddle{\stack{$\sstwiddle$}{$\triplesstwiddle$}{\twiddlespace}}\def\quadru
\def\strikedist{3pt}
\def\dstrike{\vrule width7pt height0pt depth.4pt}
\def\tstrike{\vrule width7pt height0pt depth.4pt}
\def\sstrike{\hbox{\vrule width5pt height0pt depth.4pt}}
\def\ssstrike{\hbox{\vrule width3pt height0pt depth.4pt}}
\def\strike{{\mathchoice{\dstrike}{\tstrike}{\sstrike}{\ssstrike}}}
\def\ub#1{\underput\strike{#1}{\strikedist}}
\newcommand\!a{{\boldsymbol a}}
\newcommand\!b{{\boldsymbol b}}
\newcommand\!c{{\boldsymbol c}}
\newcommand\!d{{\boldsymbol d}}
\newcommand\!e{{\boldsymbol e}}
\newcommand\!f{{\boldsymbol f}}
\newcommand\!g{{\boldsymbol g}}
\newcommand\!h{{\boldsymbol h}}
\newcommand\!i{{\boldsymbol i}}
\newcommand\!j{{\boldsymbol j}}
\newcommand\!k{{\boldsymbol k}}
\newcommand\!l{{\boldsymbol l}}
\newcommand\!m{{\boldsymbol m}}
\newcommand\!n{{\boldsymbol n}}
\newcommand\!o{{\boldsymbol o}}
\newcommand\!p{{\boldsymbol p}}
\newcommand\!q{{\boldsymbol q}}
\newcommand\!r{{\boldsymbol r}}
\newcommand\!s{{\boldsymbol s}}
\newcommand\!t{{\boldsymbol t}}
\newcommand\!u{{\boldsymbol u}}
\newcommand\!v{{\boldsymbol v}}
\newcommand\!w{{\boldsymbol w}}
\newcommand\!x{{\boldsymbol x}}
\newcommand\!y{{\boldsymbol y}}
\newcommand\!z{{\boldsymbol z}}
\newcommand\!A{{\boldsymbol A}}
\newcommand\!B{{\boldsymbol B}}
\newcommand\!C{{\boldsymbol C}}
\newcommand\!D{{\boldsymbol D}}
\newcommand\!E{{\boldsymbol E}}
\newcommand\!F{{\boldsymbol F}}
\newcommand\!G{{\boldsymbol G}}
\newcommand\!H{{\boldsymbol H}}
\newcommand\!I{{\boldsymbol I}}
\newcommand\!J{{\boldsymbol J}}
\newcommand\!K{{\boldsymbol K}}
\newcommand\!L{{\boldsymbol L}}
\newcommand\!M{{\boldsymbol M}}
\newcommand\!N{{\boldsymbol N}}
\newcommand\!O{{\boldsymbol O}}
\newcommand\!P{{\boldsymbol P}}
\newcommand\!Q{{\boldsymbol Q}}
\newcommand\!R{{\boldsymbol R}}
\newcommand\!S{{\boldsymbol S}}
\newcommand\!T{{\boldsymbol T}}
\newcommand\!U{{\boldsymbol U}}
\newcommand\!V{{\boldsymbol V}}
\newcommand\!W{{\boldsymbol W}}
\newcommand\!X{{\boldsymbol X}}
\newcommand\!Y{{\boldsymbol Y}}
\newcommand\!Z{{\boldsymbol Z}}
\newcommand\!alpha{{\boldsymbol\alpha}}
\newcommand\!beta{{\boldsymbol\beta}}
\newcommand\!gamma{{\boldsymbol\gamma}}
\newcommand\!delta{{\boldsymbol\delta}}
\newcommand\!epsilon{{\boldsymbol\epsilon}}
\newcommand\!zeta{{\boldsymbol\zeta}}
\newcommand\!eta{{\boldsymbol\eta}}
\newcommand\!theta{{\boldsymbol\theta}}
\newcommand\!iota{{\boldsymbol\iota}}
\newcommand\!kappa{{\boldsymbol\kappa}}
\newcommand\!lambda{{\boldsymbol\lambda}}
\newcommand\!mu{{\boldsymbol\mu}}
\newcommand\!nu{{\boldsymbol\nu}}
\newcommand\!xi{{\boldsymbol\xi}}
\newcommand\!pi{{\boldsymbol\pi}}
\newcommand\!rho{{\boldsymbol\rho}}
\newcommand\!sigma{{\boldsymbol\sigma}}
\newcommand\!tau{{\boldsymbol\tau}}
\newcommand\!upsilon{{\boldsymbol\upsilon}}
\newcommand\!phi{{\boldsymbol\phi}}
\newcommand\!chi{{\boldsymbol\chi}}
\newcommand\!psi{{\boldsymbol\psi}}
\newcommand\!omega{{\boldsymbol\omega}}
\newcommand\!varepsilon{{\boldsymbol\varepsilon}}
\newcommand\!vartheta{{\boldsymbol\vartheta}}
\newcommand\!varpi{{\boldsymbol\varpi}}
\newcommand\!varrho{{\boldsymbol\varrho}}
\newcommand\!varsigma{{\boldsymbol\varsigma}}
\newcommand\!varphi{{\boldsymbol\varphi}}
\newcommand\!Gamma{{\boldsymbol\Gamma}}
\newcommand\!Delta{{\boldsymbol\Delta}}
\newcommand\!Theta{{\boldsymbol\Theta}}
\newcommand\!Lambda{{\boldsymbol\Lambda}}
\newcommand\!Xi{{\boldsymbol\Xi}}
\newcommand\!Pi{{\boldsymbol\Pi}}
\newcommand\!Sigma{{\boldsymbol\Omega\eigma}}
\newcommand\!Upsilon{{\boldsymbol\Upsilon}}
\newcommand\!Phi{{\boldsymbol\Phi}}
\newcommand\!Psi{{\boldsymbol\Psi}}
\newcommand\!Omega{{\boldsymbol\Omega}}
\begin{document}

\title [Finite element for Naghdi shell]
{A discontinuous Galerkin method\\ for the Naghdi shell model}


\author{Sheng Zhang}

\thanks{Department of Mathematics, Wayne State University, Detroit, MI 48202 (\texttt{szhang@wayne.edu})}

\begin{abstract}
We propose a mixed discontinuous Galerkin method for the bending problem of Naghdi shell, and present an 
analysis for its accuracy.
The error estimate shows that when components of the curvature tensor and Christoffel symbols 
are piecewise linear functions,
the finite element method has the optimal order of accuracy, which is uniform with respect to the shell
thickness. Generally, the error estimate shows how the accuracy  is affected by the shell geometry and thickness.
It suggests that  to achieve optimal rate of convergence, the triangulation should be
properly refined in regions where the shell geometry changes dramatically.
These are the results 
for a balanced method in which the primary displacement components and rotation components 
are approximated by discontinuous
piecewise quadratic  polynomials, while components of the scaled membrane stress tensor and shear stress vector
are approximated by continuous piecewise linear functions. On elements that have edges on the free boundary 
of the shell, finite element space for displacement components needs to be 
enriched slightly, for stability purpose. 
Results on higher order finite elements are also included.

\vspace{12pt}

\noindent{\sc Key words.} Naghdi shell model, membrane/shear locking, mixed finite element, discontinuous Galerkin method.
\newline \noindent{\sc Subject classification.} 65N30, 65N12, 74K25.
\end{abstract}
\maketitle


\section{Introduction}

We propose a mixed finite element method for the Naghdi shell model and present an analysis for its accuracy.
In the method, the midsurface displacement and normal fiber rotation  are
approximated by discontinuous piecewise polynomials, while the scaled membrane stress and transverse shear stress 
are approximated by
continuous piecewise polynomials.
This is a discontinuous Galerkin (DG) method in terms of the primary variables of the Naghdi shell model.
The finite elements for various variables  form a balanced combination in the sense that
except for some minor displacement enrichments required by stability on the free edge of the shell,
every degree of freedom contributes to the accuracy of the finite element solution.
DG method provides a more general approach
and offers  more flexibilities in choosing finite element spaces and degree of freedoms.
It is believed to have a potential to help resolve some difficult problems in numerical computation of elastic shells \cite{AF-RM-DG, A-RM-DG, Feng}.
In this paper, we show that DG method indeed has advantages in reducing the troublesome
membrane and shear locking in computation of shell bending problems.
We prove an error estimate showing that when the geometry of a shell satisfies certain conditions the method yields
a finite element solution that has the
optimal order of accuracy that could be achieved by the best approximation from the finite element functions.
Thus it is free of membrane/shear locking.
When such condition is not satisfied, the estimate shows how the accuracy is affected
by the geometrical coefficients and how to adjust the finite element mesh to accommodate the curved shell mid-surface such that
the finite element solution
achieves the optimal order of accuracy.
Particularly, the estimate suggests that some refinements for the finite element mesh should be done
where a shell changes geometry abruptly.

We consider a thin shell of thickness $2\eps$.
Its middle surface $\t\Omega\subset\R^3$ is the image of a two-dimensional coordinate domain $\Omega\subset\R^2$ through
the parameterization mapping $\!phi:\Omega\to\t\Omega$. This mapping furnishes the curvilinear coordinates on the surface $\t\Omega$.
Subject to loading forces and boundary conditions, the shell would be deformed to a stressed state.
The Naghdi shell model uses displacement of the shell mid-surface and rotation of normal fibers as the primary variables.
The tangential displacement is represented by its covariant components $u\e_{\alpha}$ ($\alpha\in\{1,2\}$), normal displacement is a scalar $w\e$, 
and the rotation is a vector with covariant components $\theta\e_\alpha$.
(The superscript $\eps$ indicates dependence on the shell thickness.)
To deal with membrane and transverse shear locking, we also introduce
the transverse shear stress vector and symmetric membrane stress tensor both scaled by multiplying the factor $\eps^{-2}$  as independent variables,
which are given in terms their  contravariant components $\xi^{\eps\alpha}$ and $\M^{\eps\alpha\beta}$ ($\alpha, \beta\in\{1,2\}$).
All the ten functions are two-variable functions defined on $\Omega$.
For a bending dominated shell problem, under a suitable scaling on the loading force, these functions converge to finite limits when $\eps\to 0$.
This justifies our choice of approximating them as independent variables.
For a curved shell deformation to be bending dominated,  the shell needs to have a portion of its boundary free, 
or subject to
force conditions. It is known that a totally clamped or simply supported  elliptic, parabolic, or hyperbolic
shell does not allow bending dominated behavior \cite{CiarletIII}.
We assume the shell boundary is divided into three parts, on which the shell is clamped, simply supported, and free of displacement constraint,
respectively, and the free part is not empty.

We assume that the coordinate domain $\Omega$ is a polygon. On $\Omega$, we introduce a triangulation $\T_h$
that is shape regular but not necessarily quasi-uniform. The shape regularity of a triangle is defined as the ratio of the diameter
of its smallest circumscribed circle and the diameter of its largest inscribed circle. The shape regularity of a triangulation is the maximum of
shape regularities of all its triangular elements. When we say a $\T_h$ is shape regular we mean that the shape regularity
of $\T_h$ is bounded by an absolute constant $\K$.  Shape regular meshes allow local refinements, and thus have the potential to
more efficiently resolve the ever increasing singularities in solutions of the shell model.
We use $\T_h$ to
denote the set of all the (open) triangular
elements, and let $\Omega_h=\cup_{\tau\in\T_h}\tau$.
We use $h_\tau$ to denote the diameter of the element $\tau$.
We analyze a particular combination of finite elements for various variables.
We use totally discontinuous piecewise quadratic polynomials to approximate the
displacement components and rotation components  $u\e_\alpha$, $w\e$, and $\theta\e_\alpha$,  
and use continuous piecewise linear functions to
approximate components of the scaled membrane stress tensor $\M^{\eps\alpha\beta}$ and scaled shear stress vector $\xi^{\eps\alpha}$ .
If an element $\tau$ has one edge that lies on the free boundary of the shell, we need to enrich the space of quadratic
polynomials for the displacement by adding two cubic polynomials. If an element has two edges on the free 
boundary, we need to use the the full cubic polynomials for the displacements. 
The finite element model
yields an approximation $(\theta^h_\alpha, u^h_{\alpha}, w^h)$, $(\M^{h \alpha\beta}, \xi^{h\alpha})$,
and we have the error estimate that 
\begin{multline}\label{estimate1}
\|(\!theta\e-\!theta^h, \!u\e-\!u^h, w\e-w^h)\|_{\H_h}\\
\le C
\left[1+\eps^{-1}
\max_{\tau\in\T_h}h^3_\tau\left(\sum_{\alpha,\beta,\lambda=1,2}|\Gamma^{\lambda}_{\alpha\beta}|_{2,\infty,\tau}+
\sum_{\alpha,\beta=1,2}|b_{\alpha\beta}|_{2,\infty,\tau}+\sum_{\alpha,\beta=1,2}|b^\beta_\alpha|_{2,\infty,\tau}\right)
\right]
\\
\left\{
\sum_{\tau\in\T_h}h^4_{\tau}\left[\sum_{\alpha=1,2}\left(\|\theta\e_\alpha\|^2_{3,\tau}+\|u\e_\alpha\|^2_{3,\tau}\right)+\|w\e\|^2_{3,\tau}+
\sum_{\alpha, \beta=1,2}\|\M^{\eps\alpha\beta}\|^2_{2,\tau}+\sum_{\alpha=1,2}\|\xi^{\eps\alpha}\|^2_{2,\tau}
\right]\right\}^{1/2}.
\end{multline}
Here, $C$ is a constant that could be dependent on the shape regularity $\K$  of $\T_h$ and the shell mid-surface, but otherwise
it is independent of the finite element mesh, the shell thickness, and the shell model solution.
For a subdomain $\tau\subset\Omega$, we use 
$\|\cdot\|_{k,\tau}$ and $|\cdot|_{k,\tau}$ to denote the norm and semi norm of the Sobolev space $H^k(\tau)$,
and use $\|\cdot\|_{k,\infty,\tau}$ and $|\cdot|_{k,\infty,\tau}$ to denote that of $W^{k,\infty}(\tau)$.
When $\tau=\Omega$, the space $H^k(\Omega)$ will be simply written as $H^k$.
The functions $\Gamma^{\lambda}_{\alpha\beta}$ are
the Christoffel symbols and $b_{\alpha\beta}$ and $b^\alpha_\beta$ the covariant and mixed components of curvature tensor of 
the parameterized shell middle surface $\t\Omega$. 
These will be called geometrical coefficients of the shell.
The left hand side norm is the  piecewise $H^1$ norm for the error $\theta\e_\alpha-\theta^h_\alpha$, $u\e_\alpha-u^h_{\alpha}$, and $w\e-w^h$,
plus penalties on discontinuity and violation of the essential boundary conditions by the finite element
approximation, see \eqref{Hh-norm} below.
It is noted that we have no estimate in for the errors $\!xi^{\eps\alpha}-\!xi^{h\alpha}$ and  $\M^{\eps\alpha\beta}-\M^{h\alpha\beta}$ in \eqref{estimate1},
while $\!xi^{\eps\alpha}$ and $\M^{\eps\alpha\beta}$ are  involved in the
right hand side, which usually has very strong internal and boundary layers. Some weaker estimate for this error will be given below.

The quantity in the first bracket in the right hand side of \eqref{estimate1}
is independent of the shell model solution. It, however, involves the geometrical coefficients,
the triangulation $\T_h$, and the shell thickness $\eps$.
If the geometric coefficients of the shell midsurface
are piecewise linear functions, then the quantity is completely
independent of $\eps$.
Generally, $\eps$ has some negative effect. To keep the quantity bounded,
the finite element mesh needs to be relatively fine
where the geometric coefficients has greater second order derivatives.
Where the shell is flat, the thickness $\eps$
does not impose much restriction on the mesh size.
In any case, the quantity in the first bracket is bounded if $h^3=\O(\eps)$, with $h$ being the maximum size of finite elements.
The finite element mesh, the shell shape, and its thickness together should satisfy a condition
such that the quantity in the first bracket is bounded.
It seems reasonable to say that the method reduces locking
quite significantly, which otherwise would amplify the error by a factor of the magnitude $\eps^{-1}$.

To assess the accuracy of the finite element solution,
we scale the loading forces on the shell by multiplying them with the factor $\eps^2$. (Such scaling will not affect relative errors
of numerical solutions.)
Then we have the limiting behaviors that
when $\eps\to 0$,
$\theta\e_\alpha\to\theta^0_\alpha$, $u\e_\alpha\to u^0_\alpha$,  and $w\e\to w^0$ in $H^1$, and $\xi^{\eps\alpha}$ and  $\M^{\eps\alpha\beta}$ 
converge to finite limits in a weaker norm.
The shell problem is bending dominated if and only if $(\theta^0_\alpha, u^0_\alpha, w^0)\neq 0$.
In this case, the smallness of the error in the left hand side of \eqref{estimate1} means small relative error
of the approximation of the primary variables, thus accuracy of the finite element model.
The asymptotic behaviors of $\theta\e_\alpha$, $u\e_\alpha$, $w\e$, $\xi^{\eps\alpha}$, and $\M^{\eps\alpha\beta}$ , in terms of convergence in strong or weak norms,
mean that they
tend to limiting functions in major  part of the domain, while
may exhibit boundary or internal layers that occur in slimmer and slimmer portions of the domain.
If the finite element functions are capable of
resolving such  singular layers,
the finite element solution would be accurate and free of membrane and shear locking.
It is noted that the quantity in the brace in the right hand side of \eqref{estimate1}
is the error estimate of the best approximations of $\theta\e_\alpha$, $u\e_\alpha$, $w\e$, $\xi^{\eps\alpha}$ and $\M^{\eps\alpha\beta}$
from their finite element functions in the piecewise  $H^1$-norm and $L^2$-norm, respectively.

If the limit $(\theta^0_\alpha, u^0_\alpha, w^0)$ is zero,
the shell deformation is not bending dominated. In this case
we do not have the accuracy of the finite element solution measured in the aforementioned relative error.
In computation, one would obtain finite element solutions that converge to zero  in the norm
in the left hand side of \eqref{estimate1} when $\eps\to 0$.
Our theory implies that
such smallness must not be due to numerical membrane or shear locking.
But rather, it indicates that the shell problem is not bending dominated, and
needs to be treated differently, in which case standard finite element methods could be better.
Whether a shell problem is bending dominated, membrane/shear  dominated, or intermediate
is determined by the shell shape, loading force, and boundary conditions \cite{CiarletIII, Bathe-book}.
Membrane/shear  locking is the most critical issue in bending dominated problems \cite{ABrezzi2}.

There is a huge literature on scientific computing of shell models.
Despite great success in numerical computation  in shell mechanics, the mathematical theory of numerical analysis is much less developed,
see the books \cite{Bernadou, CiarletIII, Hughes-book, Bathe-book} for reviews. 
There are several theories on locking free finite elements that are relevant to this paper.
In \cite{ABrezzi2}, a locking free estimate was established for Naghdi shell under the assumption that the geometrical coefficients  are piecewise constants.
In \cite{Suri}, similar result was proved for some higher order finite elements under the assumption that
the geometrical coefficients are higher order piecewise polynomials.
These papers did not say how the finite element accuracy would be affected had the assumptions on the geometrical coefficients  not been met.
In \cite{Bramble-Sun2}, a uniform accuracy of a finite element method was proved for Naghdi shell model
under a condition of the form $h^2\le \O(\eps)$, with some bubble functions introduced to enhance the stability.
Our result seems more general and our method seems simpler.
We have made an effort not to assume the finite mesh to be quasi-uniform.  This is important for
the shell model for which layers of singularities are very common in its solution and quasi-uniform meshes
are not practical.
The stability achieved in this paper are mainly due to the flexibility of discontinuous approximations. 
A theory for Koiter shell that is related to this paper can be found in \cite{Koiter-shell}.

The paper is organized as follows. In Section~\ref{SHELL} we recall the shell model in the standard variational form, 
and write it in a mixed form by introducing the scaled transverse shear stress vector and scaled membrane stress tensor as new variables. 
An asymptotic estimate on the model solution, and an equivalent estimate on the solution of the mixed model
are given in an abstract setting. The latter will  be used in analysis of the finite element model.
In Section~\ref{principle}, we introduce the finite element model that is consistent with the mixed 
form of the Naghdi shell model. The consistency is verified in the appendix. The appendix  also includes proofs for the abstract 
results.
In Section~\ref{KornOnShell}, we prove a discrete version of Korn's inequality that is suitable for the Naghdi shell.
This inequality plays a fundamental role in the error analysis. The error analysis is carried out in Section~\ref{ErrorAnalysis}.
In the last section, we briefly report  the results for higher order finite elements.

For a fixed $\eps$, the shell model solution $\theta\e_\alpha, u\e_\alpha, w\e$ will be assumed to have the $H^3$ regularity.
Of course, when $\eps\to 0$ these functions could  go to 
infinity in this norm.
Throughout the paper, $C$ is a constant that could be dependent on the geometrical coefficients of the shell mid-surface, 
the Lam\'e coefficients of the elastic material,  and shape regularity $\K$
of the triangulation  $\T_h$. It is otherwise independent of the triangulation and shell thickness. We shall simply say 
that the constant $C$ is independent of $\T_h$ and $\eps$.
For such a constant $C$, we use $A\lesssim B$ to denote $A\le CB$. If $A\lesssim B$ and $A\lesssim B$, we write $A\simeq B$.
Superscripts indicate contravariant components of vectors and tensors, and subscripts indicate covariant components.
Greek sub and super scripts, except $\eps$,  
take their values in $\{1,2\}$. Latin scripts take their values in $\{1,2,3\}$. Summation rules with respect to repeated sub and
super scripts will also be used. Vectors with covariant  components $u_\alpha$ or contravariant components $\xi^\alpha$ 
will be  represented by the bold face letter $\!u$ or $\!xi$, respectively. 
A tensor with components $\M^{\alpha\beta}$ will be simply called $\M$.


\section{The shell model}
\label{SHELL}
Let $\t\Omega\subset\R^3$
be the middle surface of a shell of thickness $2\eps$.
It is  the image of a domain
$\Omega\subset\R^2$ through a mapping $\!phi$.
The coordinates $x_\alpha\in\Omega$
then furnish the curvilinear coordinates on $\t\Omega$.
We assume that at any point on the surface,
along the coordinate lines,
the two tangential vectors
$\!a_{\alpha}={\partial\!phi}/{\partial x_{\alpha}}$
are linearly independent.
The unit vector
$\!a_3=(\!a_1\x\!a_2)/|\!a_1\x\!a_2|$ is normal to $\t\Omega$.
The triple $\!a_i$ furnishes the covariant basis on $\t\Omega$.
The contravariant basis
$\!a^i$ is defined by the relations
$\!a^{\alpha}\cdot\!a_{\beta}=\delta^{\alpha}_{\beta}$ and $\!a^3=\!a_3$,
in which $\delta^{\alpha}_{\beta}$ is the Kronecker delta.
It is obvious that $\!a^{\alpha}$ are also tangent to the surface.
The metric tensor has the covariant components
$a_{\alpha\beta}=\!a_{\alpha}\cdot\!a_{\beta}$,  the determinant of
which is denoted by $a$. The contravariant components
are given by
$a^{\alpha\beta}=\!a^{\alpha}\cdot\!a^{\beta}$.
The curvature tensor
has covariant components
$b_{\alpha\beta}=\!a_3\cdot\partial_{\beta}\!a_{\alpha}$, whose
mixed components are $b^{\alpha}_{\beta}=a^{\alpha\gamma}b_{\gamma\beta}$.
The symmetric tensor $c_{\alpha\beta}=b^\gamma_\alpha b_{\gamma\beta}$ is called the third 
fundamental form of the surface.
The Christoffel symbols
are defined by
$\Gamma^{\gamma}_{\alpha\beta}
=\!a^{\gamma}\cdot\partial_{\beta}\!a_{\alpha}$,
which are symmetric with respect to the subscripts. The derivative of a scalar is a covariant vector.
The covariant derivative of a vector or tensor is a higher order tensor.
The formulas below will be used in the following.
\begin{equation}\label{covariant-derivative}
\begin{gathered}
u_{\alpha|\beta}=\partial_{\beta}u_{\alpha}-\Gamma^{\gamma}_{\alpha\beta}
u_{\gamma},\quad
\eta^\alpha|_\beta=\partial_\beta\eta^\alpha+\Gamma^\alpha_{\beta\delta}\eta^\delta,\\
\sigma^{\alpha\beta}|_{\gamma}=\partial_{\gamma}\sigma^{\alpha\beta}
+\Gamma^{\alpha}_{\gamma\lambda}\sigma^{\lambda\beta}
+\Gamma^{\beta}_{\gamma\tau}\sigma^{\alpha\tau}.
\end{gathered}
\end{equation}
Product rules for differentiations, like
$(\sigma^{\alpha\lambda}u_{\lambda})|_{\beta}=
\sigma^{\alpha\lambda}|_{\beta}u_{\lambda}
+\sigma^{\alpha\lambda}u_{\lambda|\beta}$,
are valid. For more information see \cite{GZ}.

The mapping $\!phi$ is a one-to-one correspondence between $\Omega$ and $\t\Omega$. It maps a subdomain $\tau\subset\Omega$
to a subregion $\t\tau=\!phi(\tau)\subset\t\Omega$. 
A function $f$ defined on the shell middle surface will be identified with a function defined on $\Omega$ through the mapping $\!phi$ and denoted 
by the same notation. Thus $f(\!phi(x_\alpha))=f(x_\alpha)$. The integral over $\t\tau$ with respect to the surface area element 
is related to the double integral on $\tau$ by
\begin{equation*}
\int_{\t\tau}fd\t S=\int_\tau f\sqrt a dx_1dx_2.
\end{equation*}
We will ignore the area element $d\t S$ in the integral over the surface $\t\tau$, and 
simply write the left hand side integral as  $\displaystyle\int_{\t\tau}f$, and ignore the $dx_1dx_2$ in integral on the subdomain $\tau$, 
and write the right hand side integral as $\displaystyle\int_{\tau}f\sqrt a$.
The mapping $\!phi$ maps a curve $e\subset\overline\Omega$ to a curve $\t e=\!phi(e)$ contained in the closure of 
$\t\Omega$. Let $x_\alpha(s)$ be the arc length parameterization
of $e$, then $\!phi(x_\alpha(s))$ is a parameterization of $\t e$, but not in terms of the arc length of $\t e$. Let $\t s$ be the arc length 
parameter of $\t e$,  then the line integrals are related by
\begin{equation*}
\int_{\t e}fd\t s=\int_ef\sqrt{\sum_{\alpha,\beta=1, 2}a_{\alpha\beta}\frac{dx_\alpha}{ds}\frac{dx_\beta}{ds}}ds.
\end{equation*}
Similar to surface integrals, we will ignore the $d\t s$ in the left hand side line integral and the $ds$ in the right hand side line  integral.
Without further explanation, a tilde indicates operations on the curved surface $\t\Omega$, and no tilde means the operations are on the flat
domain $\Omega$. 
For any line element $e$, area element $\tau$, and function $f$ that 
make the following integrals meaningful, we have 
\begin{equation*}
\int_{\t\tau}|f|\simeq\int_\tau|f|,\quad
\int_{\t e}|f|\simeq\int_e|f|.
\end{equation*}
We need to repeatedly do integration by parts on the shell mid-surface, by using the Green's theorem on surfaces.
Let $\tau\subset\Omega$ be a subdomain, which is mapped to
the subregion $\tilde\tau\subset\t\Omega$ by $\!phi$.
Let $\!n=n_{\alpha}\!a^{\alpha}$ 
be the unit outward normal to the boundary $\partial\tilde\tau=\!phi(\partial\tau)$
which is tangent to the surface $\t\Omega$.
Let $\bar n_\alpha\!e^\alpha$ be the  unit outward normal vector to $\partial\tau$ in $\R^2$. Here $\!e^\alpha$ is the basis vector in $\R^2$.
The Green's theorem \cite{GZ} says that for a vector field $f^\alpha$,
\begin{equation}\label{Green}
\int_{\tilde\tau}f^{\alpha}|_{\alpha}=
\int_{\partial\tilde\tau}f^{\alpha}n_{\alpha}=
\int_{\partial\tau}f^{\alpha}\bar n_{\alpha}\sqrt a.
\end{equation}

\subsection{The Naghdi shell model}
The Naghdi shell model \cite{Naghdi, thesis} uses displacement $\!u, w$ of the shell mid-surface and normal fiber rotation $\!theta$ as the primary variables.
The bending strain, membrane strain, and transverse shear strain due to the deformation represented by 
such a set of primary variables are   
\begin{equation}\label{N-bending}
\rho_{\alpha\beta}(\!theta, \!u, w)=
\frac12(\theta_{\alpha|\beta}+\theta_{\beta|\alpha})-\frac12(b^\gamma_\alpha u_{\gamma|\beta}+b^\gamma_\beta u_{\gamma|\alpha})+c_{\alpha\beta}w,
\end{equation}
\begin{equation}\label{N-metric}
\gamma_{\alpha\beta}(\!u,w)=
\frac12(u_{\alpha|\beta}+u_{\beta|\alpha})
-b_{\alpha\beta}w,
\end{equation}
\begin{equation}\label{N-shear}
\tau_\alpha(\!theta, \!u, w)=\partial_\alpha w+b^\gamma_\alpha u_\gamma+\theta_\alpha.
\end{equation}

The loading forces on the shell body and upper and lower surfaces
enter the shell model as resultant loading forces per unit area  on the shell middle surface,
of which the tangential force density is
$p^{\alpha}\!a_{\alpha}$ and transverse force density $p^3\!a_3$.
Let the boundary $\partial\t\Omega$ be divided to $\partial^D\t\Omega\cup\partial^S\t\Omega\cup\partial^F\t\Omega$.
On $\partial^D\t\Omega$ the shell is clamped, on $\partial^S\t\Omega$ the shell is soft-simply supported, and
on $\partial^F\t\Omega$ the shell is free of displacement constraint and subject to force or moment  only.
(There are $32$ different ways to specify boundary conditions at any point on the shell boundary, of which we consider the three most typical.)
The shell model is
defined in the Hilbert space
\begin{multline}\label{N-space}
H=\{(\!phi, \!v, z)\in \!H^1\x\!H^1\x H^1;\  v_\alpha \text{ and } z \text{ are }0\  \text{on}\ \partial^D\Omega\cup\partial^S\Omega, \\
\text{ and  }\theta_\alpha \text{ is }0\
\text{on}\ \partial^D\Omega\}.
\end{multline}
The model determines a unique $(\!theta\e, \!u\e, w\e)\in H$
such that
\begin{multline}\label{N-model}
\frac13\int_{\t\Omega}
a^{\alpha\beta\lambda\gamma}\rho_{\lambda\gamma}(\!theta\e, \!u\e, w\e)
\rho_{\alpha\beta}
(\!phi, \!v, z)\\
+\eps^{-2}\int_{\t\Omega}
a^{\alpha\beta\lambda\gamma}\gamma_{\lambda\gamma}(\!u\e,w\e)
\gamma_{\alpha\beta}(\!v,z)+\kappa\mu\eps^{-2}\int_{\t\Omega}a^{\alpha\beta}\tau_\alpha(\!theta\e, \!u\e, w)\tau_\beta(\!phi, \!v, z)
\\
=
\int_{\t\Omega}
(p^{\alpha}v_{\alpha}+
p^3z)
+\int_{\partial^S\t\Omega}r^\alpha\phi_\alpha
+\int_{\partial^F\t\Omega}\left(q^\alpha v_\alpha+q^3z+r^\alpha\phi_\alpha\right)
\ \ \forall\
(\!phi, \!v,z) \in H.
\end{multline}
Here, $q^i$ and $r^\alpha$ are the force resultant and
moment resultant on the shell edge \cite{Naghdi}.
The factor $\kappa$ is a shear correction factor, often assumed to be $5/6$, which we think should be $1$ \cite{Naghdi-arch}.
The fourth order contravariant tensor
$a^{\alpha\beta\gamma\delta}$ is the elastic tensor of the shell,
defined by
\begin{equation*}
a^{\alpha\beta\gamma\delta}=\mu (a^{\alpha\gamma}a^{\beta\delta}+a^{\beta\gamma}a^{\alpha\delta})+
\frac{2\mu\lambda}{2\mu+\lambda}
a^{\alpha\beta}a^{\gamma\delta}.
\end{equation*}
Here, $\lambda$ and $\mu$ are the Lam\'e coefficients of the elastic material, which we assume to be constant.
The compliance tensor of the shell defines the inverse operator of the elastic tensor, given by
\begin{equation*}
a_{\alpha\beta\gamma\delta}=\frac{1}{2\mu}\left[
\frac12(a_{\alpha\delta}a_{\beta\gamma}+
a_{\beta\delta}a_{\alpha\gamma})-\frac{\lambda}{2\mu+3\lambda}a_{\alpha\beta}a_{\gamma\delta}
\right]
\end{equation*}
For symmetric tensors $\sigma^{\alpha\beta}$ and $\gamma_{\alpha\beta}$,
$\sigma^{\alpha\beta}=a^{\alpha\beta\gamma\delta}\gamma_{\gamma\delta}$ if and only if
$\gamma_{\alpha\beta}=a_{\alpha\beta\gamma\delta}\sigma^{\gamma\delta}$.
The elastic tensor is a continuous and positive definite operator in the sense that there is a positive constant $C$ depending
on the shell surface and shell material such that for any covariant tensors $\gamma_{\alpha\beta}$ and $\rho_{\alpha\beta}$,
\begin{equation}\label{elastic-tensor-equiv}
\begin{gathered}
a^{\alpha\beta\gamma\delta}\gamma_{\alpha\beta}\rho_{\gamma\delta}\le C\left(\sum_{\alpha, \beta=1,2}\gamma_{\alpha\beta}^2\right)^{1/2}
\left(\sum_{\alpha, \beta=1,2}\rho_{\alpha\beta}^2\right)^{1/2},\\
\sum_{\alpha, \beta=1,2}\gamma_{\alpha\beta}^2\le C a^{\alpha\beta\gamma\delta}\gamma_{\alpha\beta}\gamma_{\gamma\delta}.
\end{gathered}
\end{equation}
The compliance tensor has the similar property that for any contravariant tensors $\M^{\alpha\beta}$ and $\N^{\alpha\beta}$,
\begin{equation}\label{compliance-tensor-equiv}
\begin{gathered}
a_{\alpha\beta\gamma\delta}\M^{\alpha\beta}\N^{\gamma\delta}\le C\left(\sum_{\alpha, \beta=1,2}{\M^{\alpha\beta}}^2\right)^{1/2}
\left(\sum_{\alpha, \beta=1,2}{\N^{\alpha\beta}}^2\right)^{1/2},\\
\sum_{\alpha, \beta=1,2}{\M^{\alpha\beta}}^2\le C a_{\alpha\beta\gamma\delta}\M^{\alpha\beta}\M^{\gamma\delta}.
\end{gathered}
\end{equation}
The model  \eqref{N-model} has a unique solution in the space $H$ \cite{CiarletIII, BCM}. When $\eps\to 0$, its solution
behaves in very different manners, depending on whether it is bending dominated, membrane/shear dominated, or intermediate.
For bending dominated shell problems, when the resultant loading functions $p^i$, $q^i$, and $r^\alpha$ are independent of $\eps$,
the  model solution converges to a nonzero limit that solves the limiting bending model.
We show below that the scaled membrane stress and scaled shear stress
also converge to  finite limits.

As did in \cite{ABrezzi2, Bramble-Sun2, Suri} for Naghdi shell and in \cite{Koiter-shell} for Koiter shell, 
we split a small portion of the membrane and shear parts and add them to the bending part,
replace $\eps^{-2}-\frac13$  by $\eps^{-2}$,  introduce the scaled membrane stress tensor 
$\M^{\eps\alpha\beta}=
\eps^{-2}a^{\alpha\beta\lambda\gamma}\gamma_{\lambda\gamma}(\!u\e, w\e)$
and the scaled shear stress vector $\xi^{\eps\alpha}=\eps^{-2}\kappa\mu a^{\alpha\beta}\tau_\beta(\!theta\e, \!u\e, w\e)$ 
as new variables, and
write the model in a mixed form. The mixed model seeks $(\!theta\e, \!u\e, w\e)\in H$ and $(\!xi^{\eps}, \M^{\eps})\in V=[L^2]^5$
such that
\begin{multline}\label{N-P-model}
\frac13\int_{\t\Omega}
\left[a^{\alpha\beta\lambda\gamma}\rho_{\lambda\gamma}(\!theta\e, \!u\e, w\e)
\rho_{\alpha\beta}(\!phi, \!v, z)
+
a^{\alpha\beta\lambda\gamma}\gamma_{\lambda\gamma}(\!u\e,w\e)
\gamma_{\alpha\beta}(\!v,z)\right.\\
\left.+\kappa\mu a^{\alpha\beta}\tau_\beta(\!theta\e, \!u\e, w\e)\tau_\alpha(\!phi, \!v, z)\right]
+\int_{\t\Omega}
\left[\M^{\eps\alpha\beta}
\gamma_{\alpha\beta}(\!v,z)
+\xi^{\eps\alpha}\tau_\alpha(\!phi, \!v, z)\right]
\\
\hfill
=\int_{\t\Omega}
(p^{\alpha}v_{\alpha}+
p^3z)
+\int_{\partial^S\t\Omega}r^\alpha\phi_\alpha
+\int_{\partial^F\t\Omega}\left(q^\alpha v_\alpha+q^3z+r^\alpha\phi_\alpha\right)
\ \forall\
(\!phi, \!v,z) \in H,
\\
\int_{\t\Omega}\left[\N^{\alpha\beta}\gamma_{\alpha\beta}(\!u\e, w\e)+\eta^\alpha\tau_\alpha(\!theta\e, \!u\e, w\e)\right]
-
\eps^2\int_{\t\Omega}\left[a_{\alpha\beta\lambda\gamma}\M^{\eps\alpha\beta}\N^{\lambda\gamma}+\frac{1}{\kappa\mu}a_{\alpha\beta}\xi^{\eps\alpha}\eta^\beta\right]=0\\
\hfill\forall\ (\!eta, \N) \in V.
\end{multline}
We repeat that the $\eps^2$ in this mixed formulation is actually $\eps^2/(1-\frac13\eps^2)$ with $\eps$ being the shell half thickness. 
This mixed model  is the basis for the finite element method.
In the next subsection, we present two results in abstract form, which are  applicable to the Naghdi model in the original form \eqref{N-model}
and the mixed form \eqref{N-P-model}, respectively. The latter result also furnishes a framework for analysis of the finite element model.

\subsection{Asymptotic estimates on the shell model}
\label{abstract}
Notations in this sub-section are independent of the rest of the paper.
The proofs of the theorems are given in the appendix. 
The Naghdi shell model \eqref{N-model} can be fitted in the abstract equation \eqref{prob1as} below.
Let $H$,
$U$, and $V$ be Hilbert spaces, $A$ and $B$ be
linear continuous operators from $H$ to
$U$ and $V$, respectively.
We assume
\begin{equation}\label{equiva1as}
\|Au\|_U+\|Bu\|_V\simeq\|u\|_H\ \ \forall \ u\in H.
\end{equation}
For any $\eps>0$ and $f\in H^*$, the dual space of $H$, there is a unique $u\e\in H$, such that
\begin{equation}\label{prob1as}
(Au\e,Av)_U+\eps^{-2}(Bu\e,Bv)_V=\langle f,v\rangle
\quad \forall\ v\in H.
\end{equation}
We let $\ker B\subset H$ be the kernel of the operator $B$, and
let $W\subset V$ be the range of $B$. We define a norm on $W$ by $\|w\|_{W}=\inf_{v\in H, Bv=w}\|v\|_U$ $\forall\ w\in W$, such that
$W$ is isomorphic to $H/\ker B$.
We let $\overline W$ be the closure of $W$ in $V$.
Thus $W$ is a dense subset of $\overline W$, and  $(\overline W)^*$ is dense  in $W^*$.
We need a weaker norm on $W$. For $w\in W$, we define $\|w\|_{\overline{\overline W}}=\|\pi_{\overline W}w\|_{W^*}$. Here $\pi_{\overline W}:\overline W\to(\overline W)^*$
is the inverse of Riesz representation. The relation among these norms is that for any $w\in W$,
$\|w\|_{\overline{\overline W}}\le \|\pi_{\overline W}w\|_{(\overline W)^*}=\|w\|_{\overline W}=\|w\|_V\le \|w\|_W$.
We let $\overline{\overline W}$ be the closure of $W$ in this new norm. This closure is isomorphic to $W^*$. We let $j_{[W^*\to \overline{\overline W}]}$
be the isomorphic mapping from $W^*$ to $\overline{\overline W}$.
We assume that $f|_{\ker B}\ne 0$, such that the limiting problem
\begin{equation}\label{limitas}
(Au^0, Av)_U=\langle f, v\rangle\ \ \forall\ v\in\ker B
\end{equation}
has a nonzero solution $u^0\in\ker B$.
\begin{thm}\label{N-primary-limit}
For the solution of \eqref{prob1as},
we have the asymptotic behavior that $\lim_{\eps\to 0}\|u\e-u^0\|=0$. Furthermore, there is a unique $\M\in \overline{\overline W}$ such that
$\lim_{\eps\to 0}\|\eps^{-2}Bu\e-\M\|_{\overline{\overline W}}=0$.
\end{thm}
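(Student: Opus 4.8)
The plan is to analyze the abstract problem \eqref{prob1as} by splitting the solution according to the orthogonal decomposition induced by $\ker B$. First I would set $u\e = u_0\e + u_1\e$ where $u_0\e$ is (roughly) the part tested against $\ker B$ and $u_1\e$ carries the membrane/shear energy; more precisely, it is cleaner to work directly with the energy. Observe that testing \eqref{prob1as} with $v=u\e$ gives the a priori bound $\|Au\e\|_U^2+\eps^{-2}\|Bu\e\|_V^2 = \langle f,u\e\rangle \le \|f\|_{H^*}\|u\e\|_H$, and combined with \eqref{equiva1as} (which gives $\|u\e\|_H \lesssim \|Au\e\|_U + \|Bu\e\|_V$) this yields, via a Young's inequality absorption, a uniform bound $\|u\e\|_H \le C$ and hence $\|Au\e\|_U \le C$ and $\|Bu\e\|_V \le C\eps$. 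In particular $Bu\e\to 0$ in $V$, so any weak limit point of $u\e$ lies in $\ker B$.

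Next I would establish the convergence $u\e\to u^0$ strongly in $H$. Along a subsequence $u\e \rightharpoonup u^*$ weakly in $H$; since $Bu\e\to 0$ strongly, $Bu^*=0$, i.e. $u^*\in\ker B$. Passing to the limit in \eqref{prob1as} tested against a fixed $v\in\ker B$ (so the $\eps^{-2}(Bu\e,Bv)_V$ term vanishes identically because $Bv=0$) gives $(Au^*,Av)_U = \langle f,v\rangle$ for all $v\in\ker B$, so $u^*=u^0$ by uniqueness of the solution to \eqref{limitas} (uniqueness holds since $\|A\cdot\|_U$ is a norm on $\ker B$, a consequence of \eqref{equiva1as} restricted to $\ker B$). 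For strong convergence I would use the standard energy argument: $\|A(u\e-u^0)\|_U^2 \le \|Au\e\|_U^2 - 2(Au\e,Au^0)_U + \|Au^0\|_U^2$; using \eqref{prob1as} with $v=u^0\in\ker B$ gives $(Au\e,Au^0)_U = \langle f,u^0\rangle$, and $\limsup \|Au\e\|_U^2 \le \limsup\langle f,u\e\rangle = \langle f,u^0\rangle = \|Au^0\|_U^2$ (the last by \eqref{limitas} with $v=u^0$), so $\|A(u\e-u^0)\|_U\to 0$; then $\|B(u\e-u^0)\|_V = \|Bu\e\|_V\to 0$, and \eqref{equiva1as} upgrades this to $\|u\e-u^0\|_H\to 0$. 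Since the limit is the same along every subsequence, the whole family converges.

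For the second assertion I would work in $W^*$, exploiting that $W\cong H/\ker B$. Define $\M\e := \eps^{-2}Bu\e \in W$ (it lies in $W$, the range of $B$, by construction). I claim $\{\M\e\}$ is Cauchy in the $\overline{\overline W}$-norm, equivalently $\{\pi_{\overline W}\M\e\}$ is Cauchy in $W^*$. To see this, note that for any $w\in W$, writing $w = Bv$ for some $v\in H$ with $\|v\|_U = \|w\|_W$, the pairing $\langle \pi_{\overline W}\M\e, w\rangle$ is (up to the Riesz identification) $(\M\e, Bv)_V = \eps^{-2}(Bu\e,Bv)_V = \langle f,v\rangle - (Au\e,Av)_U$ by \eqref{prob1as}. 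Hence for $\eps,\delta>0$,
\[
\langle \pi_{\overline W}(\M\e-\M^\delta), w\rangle = -(A(u\e-u^\delta),Av)_U,
\]
so $|\langle \pi_{\overline W}(\M\e-\M^\delta), w\rangle| \le \|A(u\e-u^\delta)\|_U\|Av\|_U \lesssim \|A(u\e-u^\delta)\|_U\|v\|_H$. Here is the one subtle point: $\|v\|_H$ need not be controlled by $\|w\|_W = \|v\|_U$ for the particular near-minimizing $v$, so I must instead choose $v$ to be the minimal-$H$-norm representative, using that $W$ with $\|\cdot\|_W$ is isomorphic to $H/\ker B$ and that on $H/\ker B$ the quotient $H$-norm and the $\|A\cdot\|_U$-norm are equivalent by \eqref{equiva1as}; then $\|v\|_H \lesssim \|w\|_W$ can be arranged. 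With that fix, $\|\M\e-\M^\delta\|_{\overline{\overline W}} = \|\pi_{\overline W}(\M\e-\M^\delta)\|_{W^*} \lesssim \|A(u\e-u^\delta)\|_U \to 0$ as $\eps,\delta\to 0$, since $Au\e\to Au^0$. Therefore $\M\e$ converges in the complete space $\overline{\overline W}$ to a limit $\M$, giving $\lim_{\eps\to0}\|\eps^{-2}Bu\e - \M\|_{\overline{\overline W}}=0$; uniqueness of $\M$ is immediate from uniqueness of limits. I expect the main obstacle to be precisely this last bookkeeping step — making the duality pairing $\langle\pi_{\overline W}\M\e,w\rangle$ rigorous and choosing representatives so that the estimate closes in $W^*$ rather than in some larger space — since it requires carefully tracking the three nested norms $\|\cdot\|_W \ge \|\cdot\|_V = \|\cdot\|_{\overline W} \ge \|\cdot\|_{\overline{\overline W}}$ and the isomorphisms relating $W$, $H/\ker B$, and $W^*$.
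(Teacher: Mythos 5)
Your proposal is correct, but it takes a genuinely different route from the paper. The paper does not argue directly: it introduces the Lagrange multiplier $\zeta\in W^*$ of the limit problem, defined by $\langle f,v\rangle-(Au^0,Av)_U=\langle\zeta,Bv\rangle$, rewrites \eqref{prob1as} as a singularly perturbed equation for $u^{\eps}-u^0$, and then invokes the two--sided equivalence estimate of \cite{CR1} (in the spirit of \cite{Caillerie}),
\begin{equation*}
\|A(u^{\eps}-u^0)\|_U+\eps^{-1}\|Bu^{\eps}\|_{V}+\|\eps^{-2}\pi_{\overline W}Bu^{\eps}-\zeta\|_{W^*}\simeq\|\zeta\|_{W^*+\eps(\overline W)^*},
\end{equation*}
together with the interpolation--space fact \cite{BL} that $\|\zeta\|_{W^*+\eps(\overline W)^*}\to 0$, identifying $\M=j_{[W^*\to\overline{\overline W}]}\zeta$. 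You replace this citation with an elementary energy/weak-compactness argument for $u^{\eps}\to u^0$ in $H$ (a priori bounds from testing with $u^{\eps}$, weak limit in $\ker B$, the identity $(Au^{\eps},Au^0)_U=\langle f,u^0\rangle$, and the usual upgrade to norm convergence), and you obtain the second assertion by showing $\M^{\eps}:=\eps^{-2}Bu^{\eps}$ is Cauchy in $\overline{\overline W}$ through the duality identity $(\M^{\eps},Bv)_V=\langle f,v\rangle-(Au^{\eps},Av)_U$. The paper's route buys quantitative information---a rate characterization through $\|\zeta\|_{W^*+\eps(\overline W)^*}$ and an explicit identification of $\M$ with the multiplier $\zeta$---while your route is self-contained and yields exactly the qualitative statement of the theorem; note that your abstract limit $\M$ can be identified with $\zeta$ a posteriori by passing to the limit in the duality identity. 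One remark on your ``subtle point'': the detour through minimal-$H$-norm representatives, justified by a claimed equivalence on $H/\ker B$ of the quotient $H$-norm with the $\|A\cdot\|_U$-norm (which does not follow from \eqref{equiva1as} alone), is unnecessary; your bound $|\langle \pi_{\overline W}(\M^{\eps}-\M^{\delta}),w\rangle|\le\|A(u^{\eps}-u^{\delta})\|_U\,\|Av\|_U$ holds for every representative $v$ with $Bv=w$, so taking the infimum over representatives produces precisely the factor controlled by $\|w\|_W$ in the definition of the $W$-norm, and the estimate closes without choosing any special representative.
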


In terms of the Naghdi model \eqref{N-model}, the operator $B$ is the membrane and transverse shear strain operator,
and $\ker B$ is the space pure bending deformations. The situation of $\ker B\ne 0$ is that the shell allows pure bendings,
and the condition $f|_{\ker B}\ne 0$
means that the load on the shell indeed activates pure bending.  The convergence described in this theorem
means when $\eps\to 0$, $(\!theta\e, \!u\e, w\e)$ converges to a non-zero limit in $\!H^1\x\!H^1\x H^1$ and the scaled
transverse shear stress vector and  membrane stress tensor $(\!xi\e, \M\e)$ converges to a limit in a space that generally can not be described  in 
the usual sense of space of functions
or distributions. This is a minimum information one should have in order to conceive  a possibility to make the term 
$\sum_{\tau\in\T_h}
h^4_\tau(\sum_{\alpha=1,2}\|\xi^{\eps\alpha}\|^2_{2,\tau}+\sum_{\alpha,\beta=1}^2\|\M^{\eps\alpha\beta}\|^2_{2,\tau})$ in the error estimate \eqref{estimate1} 
small, uniformly with respect to $\eps$, by a limited number of triangles.

The mixed form of the Naghdi shell model \eqref{N-P-model} can be fitted in the abstract problem
\eqref{isomorphism-abs} below. 
Let $H, V$ be Hilbert spaces. Let $a(\cdot,\cdot)$ and $c(\cdot,\cdot)$ be symmetric bilinear forms on $H$ and $V$, and $b(\cdot,\cdot)$
be a bilinear form on $H\x V$. We assume that there is  a constant $C$ such that
\begin{equation}\label{isomorphism-condition}
\begin{gathered}
|a(u, v)|\le C\|u\|_H\|v\|_H, \quad C^{-1}\|u\|_H^2\le a(u, u)\ \forall\ u, v\in H,\\
|c(p, q)|\le C\|p\|_V\|q\|_V, \quad C^{-1}\|p\|_V^2\le c(p, p)\ \forall\ p, q\in V,\\
|b(v, q)|\le C\|v\|_H\|q\|_V\ \forall\ v\in H, q\in V.
\end{gathered}
\end{equation}
For  $f\in H^*$ and $g\in V^*$,
we seek $u\in H$ and $p\in V$ such that
\begin{equation}\label{isomorphism-abs}
\begin{gathered}
a(u, v)+b(v, p)=\langle f, v\rangle\ \ \forall\ v\in H,\\
b(u, q)-\eps^2c(p, q)=\langle g, q\rangle\ \ \forall\ q\in V.
\end{gathered}
\end{equation}
This problem  has a unique solution in the space $H\x V$ \cite{ABrezzi2},
for which we need an accurate estimate. Although this problem has been extensively studied in the literature \cite{Brezzi-book},
we were not able to find what we exactly need. So we include the theorem below.

For $v\in H$, there is a $l(v)\in V^*$ such that $\langle l(v), q\rangle =b(v, q)$ $\forall\ q\in V$.
We let $B(v)=i_Vl(v)\in V$, with $i_V:V^*\to V$ being the Riesz representation operator. Let $W$ be the range of $B$.
We define a weaker (semi) norm on $V$ by
\begin{equation}\label{isomorphism-weak}
|q|_{\overline V}=\sup_{v\in H}\frac{b(v,q)}{\|v\|_H}\ \ \forall\ q\in V.
\end{equation}
If $W$ is dense in $V$,
this is a  weaker norm. Otherwise, it is a semi-norm. 
Whether $W$ is dense in $V$ or not, we have the following
equivalence result.
\begin{thm}\label{isomorphism-thm}
There exist constants $C_\alpha$ that only depend on the constant in \eqref{isomorphism-condition} such that 
\begin{multline}\label{isomorphism-equiv}
\|u\|_H+|p|_{\overline V}+\eps\|p\|_V\le C_1
\sup_{v\in H, q\in V}\frac{a(u, v)+b(v, p)-
b(u, q)+\eps^2c(p, q)}{\|v\|_H+|q|_{\overline V}+\eps\|q\|_V}\\
\le C_2(\|u\|_H+|p|_{\overline V}+\eps\|p\|_V)
\ \ \forall\ u\in H, \ p\in V.
\end{multline}
\end{thm}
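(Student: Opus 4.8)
The plan is to establish \eqref{isomorphism-equiv} by proving the two inequalities separately, with the upper bound being routine and the lower bound (the inf--sup/stability estimate) being the substantive part. For the upper bound, write $S = \sup_{v\in H, q\in V}\frac{a(u,v)+b(v,p)-b(u,q)+\eps^2c(p,q)}{\|v\|_H+|q|_{\overline V}+\eps\|q\|_V}$. One tests the bilinear form against arbitrary $(v,q)$ and uses continuity of $a$, $b$, $c$ from \eqref{isomorphism-condition}, together with the definition \eqref{isomorphism-weak} which gives $b(v,p)\le |p|_{\overline V}\|v\|_H$ and $b(u,q)\le \|u\|_H\cdot(\text{something})$; for the term $b(u,q)$ one bounds it by $\|u\|_H$ times $|q|_{\overline V}$ only after noting $b(u,q)=(B(u),q)_V$ is controlled using $|q|_{\overline V}$ is not directly an upper bound on this, so instead bound $b(u,q)\le C\|u\|_H\|q\|_V$ and absorb into the $\eps\|q\|_V$ slot only if one is careful about the $\eps$. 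The cleanest route: since $b(u,q) = b(u,q)$ and $|u|_{\overline V}$-type reasoning is not available for $u$, bound $a(u,v)\le C\|u\|_H\|v\|_H$, $b(v,p)\le |p|_{\overline V}\|v\|_H$, $\eps^2 c(p,q)\le C\eps\|p\|_V\cdot\eps\|q\|_V$, and for $b(u,q)$ use that $b(u,q)=\langle l(u),q\rangle$ with $\|l(u)\|_{V^*}\le C\|u\|_H$, hence $b(u,q)\le C\|u\|_H\|q\|_V$; since we only have $\eps\|q\|_V$ in the denominator this would cost an $\eps^{-1}$, so instead I will choose the specific test pair $v=u$, $q=-p$ in the numerator, giving $a(u,u)+\eps^2 c(p,p)\ge C^{-1}(\|u\|_H^2+\eps^2\|p\|_V^2)$, which together with the denominator $\|u\|_H + |p|_{\overline V} + \eps\|p\|_V$ yields control of $\|u\|_H$ and $\eps\|p\|_V$ up to the $|p|_{\overline V}$ term; then choosing $v$ near-optimal in \eqref{isomorphism-weak} for the functional $q\mapsto b(v,p)$ recovers $|p|_{\overline V}$. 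This gives $\|u\|_H + |p|_{\overline V} + \eps\|p\|_V \le C_1 S$.

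For the reverse inequality $S \le C_2(\|u\|_H + |p|_{\overline V} + \eps\|p\|_V)$, I would estimate the numerator directly: $a(u,v)\le C\|u\|_H\|v\|_H$, $b(v,p)\le |p|_{\overline V}\|v\|_H$ by \eqref{isomorphism-weak}, $\eps^2 c(p,q)\le C\eps\|p\|_V\cdot\eps\|q\|_V$; the only delicate term is $b(u,q)$, which I bound by $C\|u\|_H\,|q|_{\overline V}$ using the symmetry of the roles — namely $|q|_{\overline V} = \sup_{v}b(v,q)/\|v\|_H \ge b(u,q)/\|u\|_H$ directly gives $b(u,q)\le \|u\|_H|q|_{\overline V}$. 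Summing, the numerator is $\le C(\|u\|_H+|p|_{\overline V}+\eps\|p\|_V)(\|v\|_H+|q|_{\overline V}+\eps\|q\|_V)$, and dividing by the denominator gives the claim with $C_2$ depending only on $C$.

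The main obstacle is the lower bound, and within it the recovery of the $|p|_{\overline V}$ term uniformly in $\eps$. The pair $(v,q)=(u,-p)$ controls $\|u\|_H$ and $\eps\|p\|_V$ but is blind to $|p|_{\overline V}$ when $\eps$ is small; conversely a test function $v_0$ with $\|v_0\|_H\le 1$ and $b(v_0,p)\ge \tfrac12|p|_{\overline V}$ (near-maximizer in \eqref{isomorphism-weak}) plugged in with $q=0$ gives $a(u,v_0)+b(v_0,p)$, and one must absorb the $a(u,v_0)$ term using the already-established bound on $\|u\|_H$. So the argument is a two-step bootstrap: first get $\|u\|_H + \eps\|p\|_V \lesssim S$ from $(u,-p)$; then feed that into the near-maximizer test to get $|p|_{\overline V}\lesssim S + \|u\|_H \lesssim S$. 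Care is needed because $|\cdot|_{\overline V}$ may only be a seminorm (when $W$ is not dense in $V$), but the estimate \eqref{isomorphism-equiv} is stated with $|p|_{\overline V}$ throughout, so no genuine norm property of $|\cdot|_{\overline V}$ is used — only its defining supremum property and continuity, both of which hold unconditionally. I expect the bookkeeping of constants to be the only remaining nuisance, and that all constants depend solely on the $C$ appearing in \eqref{isomorphism-condition}, as asserted.
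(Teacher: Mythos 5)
Your route is genuinely different from the paper's and, once two slips are repaired, it works and is considerably more elementary. The paper proves the first inequality by reading $(u,p)$ as the solution of \eqref{isomorphism-abs} with data $(f,g)$, splitting $(u,p)=(u_1,p_1)+(u_2,p_2)$ according to the two data, citing the stability estimate of \cite{ABrezzi2} for the $f$-part (\eqref{iso-est1}), and estimating the $g$-part through a duality identity that interprets $\sup_{q\in V}\langle g,q\rangle/(|q|_{\overline V}+\eps\|q\|_V)$ as the norm of $i_Vg$ in $W+\eps^{-1}V$ (\eqref{iso-est2}), with a separate orthogonal-decomposition argument for the case that $W$ is not dense in $V$. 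Your proof needs none of this: only the coercivity and continuity assumptions \eqref{isomorphism-condition} and the tautological bound $|b(v,q)|\le\|v\|_H\,|q|_{\overline V}$ coming from the definition \eqref{isomorphism-weak}; density of $W$ in $V$ never enters, and no external result is invoked. Your upper bound (the $C_2$ direction) is correct exactly as you wrote it.

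Two corrections are needed in the lower bound. First, the sign: with the combination $a(u,v)+b(v,p)-b(u,q)+\eps^2c(p,q)$ the coercive test pair is $(v,q)=(u,p)$, not $(u,-p)$; the built-in minus sign already cancels the $b$ terms and gives $a(u,u)+\eps^2c(p,p)\ge C^{-1}(\|u\|_H^2+\eps^2\|p\|_V^2)$, whereas $(u,-p)$ produces $a(u,u)+2b(u,p)-\eps^2c(p,p)$, which has the wrong sign on the penalty term and an uncontrolled cross term. Second, the bootstrap in the order you state it is circular: the test $(u,p)$ alone does not yield $\|u\|_H+\eps\|p\|_V\lesssim S$, because the denominator it generates is $\|u\|_H+|p|_{\overline V}+\eps\|p\|_V$, which still contains the uncontrolled $|p|_{\overline V}$. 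The repair is to keep both raw inequalities and combine them: writing $A=\|u\|_H$, $E=\eps\|p\|_V$ and $S$ for the supremum, the two tests give $A^2+E^2\le C\,S\,(A+|p|_{\overline V}+E)$ and $|p|_{\overline V}\le 2S+2CA$ (the latter from the near-maximizer $v_0$ with $q=0$, which is unconditional); substituting the second bound into the first and absorbing with Young's inequality gives $A+E\le C'S$, and returning to the second bound then gives $|p|_{\overline V}\le C''S$. With this combination all constants depend only on the $C$ of \eqref{isomorphism-condition}, as the theorem asserts, so your argument, thus corrected, is a valid and simpler alternative to the paper's proof.
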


\section{The finite element model}
\label{principle}
As mentioned in the introduction, we assume that the coordinate domain $\Omega$ is a polygon.
On $\Omega$, we introduce a triangulation $\T_h$
that is shape regular but not necessarily quasi-uniform.
We use $\E^0_h$ to denote both
the union of interior edges and the set of all interior edges.
The set of edges on the boundary $\partial\Omega$
is denoted by $\E^{\partial}_h$ that is divided as $\E^D_h\cup\E^S_h\cup \E^F_h$, corresponding to clamped, soft-simply supported, and free
portions of the shell boundary. We let
$\E_h=\E^0_h\cup\E^{\partial}_h$. For a $e\in\E^h$, we use $h_e$ to denote its length. We use $\tilde\E_h=\!phi(\E_h)$ to denote the curvilinear
edges on the shell mid-surface $\t\Omega$ in a similar way. 
On $\Omega_h$, for any piecewise vectors $\xi_\alpha$, $\eta_\alpha$, $\theta_\alpha$, $\phi_\alpha$, $u_\alpha$ and $v_\alpha$, 
scalars $w$ and $z$, symmetric tensors
$\M^{\alpha\beta}$ and $\N^{\alpha\beta}$,  we define the following bilinear and linear forms.
\begin{multline}\label{form_ub_a}
\ub a(\!theta, \!u, w; \!phi,  \!v, z)=\\
\frac13\left\{\int_{\t\Omega_h}
\left[a^{\alpha\beta\lambda\gamma}\rho_{\lambda\gamma}(\!theta, \!u, w)
\rho_{\alpha\beta}(\!phi, \!v, z)
+
a^{\alpha\beta\lambda\gamma}\gamma_{\lambda\gamma}(\!u, w)
\gamma_{\alpha\beta}(\!v,z)\right.\right.\\
\left.+\kappa\mu a^{\alpha\beta}\tau_\beta(\!theta, \!u, w)\tau_\alpha(\!phi, \!v, z)\right]
\end{multline}
\begin{multline*}
-\int_{\t\E^0_h}a^{\alpha\beta\lambda\gamma}\lbrac\rho_{\lambda\gamma}(\!phi, \!v, z)\rbrac\lbra\theta_\alpha\rbra_{n_\beta}
-\int_{\t\E^0_h}a^{\alpha\beta\lambda\gamma}\lbrac\rho_{\lambda\gamma}(\!theta, \!u, w)\rbrac\lbra\phi_\alpha\rbra_{n_\beta}\\
-\int_{\t\E^0_h}\kappa\mu a^{\alpha\beta}\lbrac\tau_\beta(\!theta, \!v, z)\rbrac\lbra w\rbra_{n_\alpha}
-\int_{\t\E^0_h}\kappa\mu a^{\alpha\beta}\lbrac\tau_\beta(\!theta, \!u, w)\rbrac\lbra z\rbra_{n_\alpha}
\end{multline*}
\begin{multline*}
+\int_{\t\E^0_h}\left[a^{\alpha\beta\lambda\gamma}\lbrac\rho_{\lambda\gamma}(\!phi, \!v, z)\rbrac b^\delta_\alpha-a^{\delta\beta\alpha\gamma}
\lbrac\gamma_{\alpha\gamma}(\!v, z)\rbrac
\right]
\lbra u_\delta\rbra_{n_\beta}\\
+\int_{\t\E^0_h}\left[a^{\alpha\beta\lambda\gamma}\lbrac\rho_{\lambda\gamma}(\!theta, \!u, w)\rbrac b^\delta_\alpha-a^{\delta\beta\alpha\gamma}
\lbrac\gamma_{\alpha\gamma}(\!u, w)\rbrac
\right]
\lbra v_\delta\rbra_{n_\beta}
\end{multline*}
\begin{multline*}
+\int_{\t\E^{D\cup S}_h}\left[a^{\alpha\beta\lambda\gamma}\rho_{\lambda\gamma}(\!phi, \!v, z)b^\delta_\alpha-a^{\delta\beta\alpha\gamma}
\gamma_{\alpha\gamma}(\!v, z)
\right]
u_\delta{n_\beta}\\
+\int_{\t\E^{D\cup S}_h}\left[a^{\alpha\beta\lambda\gamma}\rho_{\lambda\gamma}(\!theta, \!u, w)b^\delta_\alpha-a^{\delta\beta\alpha\gamma}
\gamma_{\alpha\gamma}(\!u, w)
\right]
v_\delta{n_\beta}\\
-\int_{\t\E^{D\cup S}_h}\kappa\mu a^{\alpha\beta}\tau_\beta(\!phi, \!v, z)w{n_\alpha}
-\int_{\t\E^{D\cup S}_h}\kappa\mu a^{\alpha\beta}\tau_\beta(\!theta, \!u, w)z{n_\alpha}\\
\left.
-\int_{\t\E^D_h}a^{\alpha\beta\lambda\gamma}\rho_{\lambda\gamma}(\!phi, \!v, z)\theta_\alpha{n_\beta}
-\int_{\t\E^D_h}a^{\alpha\beta\lambda\gamma}\rho_{\lambda\gamma}(\!theta, \!u, w)\phi_\alpha{n_\beta}\right\}.
\end{multline*}
An inter-element edge $\t e\in\t\E^0_h$ is shared by elements $\t\tau_1$ and $\t\tau_2$. Piecewise
functions may have different values on the two elements, and thus discontinuous on $\t e$. The notation
$\lbrac\rho_{\sigma\tau}(\!phi, \!v, z)\rbrac$ represents the average of values of  $\rho_{\sigma\tau}(\!phi, \!v, z)$
from the sides of $\t\tau_1$ and $\t\tau_2$.
Let $\!n_\delta=n^\alpha_\delta\!a_\alpha=n_{\delta\alpha}\!a^\alpha$ be the unit outward normal to $\t e$ viewed as boundary of $\t\tau_\delta$.
We have $\!n_1+\!n_2=0$, $n^\alpha_1+n^\alpha_2=0$, and  $n_{1\alpha}+n_{2\alpha}=0$.
We use $\lbra u_{\alpha}\rbra_{n_{\lambda}}=(u_{\alpha})|_{\t\tau_1} n_{1\lambda}+(u_{\alpha})|_{\t\tau_2} n_{2\lambda}$ to denote the jump of $u_\alpha$ over the edge
$\t e$ with respect to $n_\lambda$, 
and use $\lbra w\rbra_{n_{\lambda}}=w|_{\t\tau_1} n_{1\lambda}+w|_{\t\tau_2} n_{2\lambda}$ to denote the jump of $w$ over the edge
$\t e$ with respect to $n_\lambda$, 
etc. 
On the boundary $\t\E^{D\cup S}_h$,  $\!n=n^\alpha\!a_\alpha=n_\alpha\!a^\alpha$ is the unit outward in surface normal to $\partial\t\Omega$.

We add  some additional penalty terms on the 
inter-element discontinuity and on the clamped and simply supported portions of the
boundary, and define a symmetric
bilinear form $a(\!theta, \!u, w;\!phi, \!v, z)$ by
\begin{multline}\label{form_a}
a(\!theta, \!u, w; \!phi, \!v, z)=\ub a(\!theta, \!u, w; \!phi, \!v, z)\\
+\C\sum_{e\in\E^0_h}
h^{-1}_e\int_{e}\left[\sum_{\alpha=1,2}\left(\lbra u_{\alpha}\rbra \lbra v_{\alpha}\rbra+\lbra \theta_{\alpha}\rbra \lbra \phi_{\alpha}\rbra\right)
+\lbra w\rbra \lbra z\rbra\right]
\\
+\C\sum_{e\in\E^{D\cup S}_h}
h^{-1}_e\int_{e}\left(\sum_{\alpha=1,2}\lbra u_{\alpha}\rbra \lbra v_{\alpha}\rbra
+\lbra w\rbra \lbra z\rbra\right)
+
\C\sum_{e\in\E^{D}_h}
h^{-1}_e\int_{e}\sum_{\alpha=1,2}\lbra \theta_{\alpha}\rbra \lbra \phi_{\alpha}\rbra.
\end{multline}
The jump $\lbra u_\alpha\rbra$ is the absolute value of the difference 
in the values of $u_\alpha$ from the two sides of $e$. The jumps $\lbra\theta_\alpha\rbra$ and  $\lbra w\rbra$ are defined in the same way. 
We also define the bilinear forms 
\begin{multline}\label{form_b}
b(\M,\!xi; \!phi, \!v,z)=
\int_{\t\Omega_h}\left[\M^{\alpha\beta}\gamma_{\alpha\beta}(\!v, z)+\xi^\alpha\tau_\alpha(\!phi, \!v, z)\right]\\
-\int_{\t\E^0_h}\left(\lbrac\M^{\alpha\beta}\rbrac\lbra v_{\alpha}\rbra_{n_{\beta}}+\lbrac\xi^\alpha\rbrac\lbra z\rbra_{n_\alpha}\right)
 -\int_{\t\E^{D\cup S}_h}\left(
\M^{\alpha\beta}{n_{\beta}}v_{\alpha}+\xi^\alpha n_\alpha z\right),
\end{multline}
\begin{multline}\label{form_c}
c(\M,\!xi; \N,\!eta)=\int_{\t\Omega_h}\left(a_{\alpha\beta\gamma\delta}\M^{\gamma\delta}\N^{\alpha\beta}
+\frac{1}{\kappa\mu}a_{\alpha\beta}\xi^\alpha\eta^\beta\right).\hfill
\end{multline}
We define a linear form
\begin{multline}\label{form_f}
\langle \!f; \!phi, \!v, z\rangle=\int_{\t\Omega_h}
(p^{\alpha}v_{\alpha}+
p^3z)
+\int_{\t\E^S_h}r^\alpha\phi_\alpha
+\int_{\t\E^F_h}\left(q^\alpha v_\alpha+q^3z+r^\alpha\phi_\alpha\right).\hfill
\end{multline}
All these forms are well defined for piecewise functions that could be independently defined on each element of the triangulation $\T_h$.

The finite element model is defined on a space of piecewise polynomials. We use continuous piecewise linear polynomials for the components
$\M^{\alpha\beta}$
of the scaled membrane stress tensor $\M$ and for the components $\xi^\alpha$ of the scaled transverse shear stress vector $\!xi$.
Therefore, in the right hand side of \eqref{form_b}, the averages  $\lbrac\M^{\alpha\beta}\rbrac$ and $\lbrac\xi^\alpha\rbrac$
are replaced by the function values $\M^{\alpha\beta}$ and $\xi^\alpha$, respectively.
We use discontinuous
piecewise quadratic polynomials for components $u_\alpha$ of the tangential displacement vector $\!u$, components $\theta_\alpha$ of the rotation 
vector $\!theta$, and the scalar $w$ of normal displacement. 
The finite element space for the displacement components $u_\alpha$ and $w$ 
needs to be enriched  on elements that have one or two edges
on the free boundary $\E^F_h$. There is no need to enrich the finite element space for rotations.
On an element $\tau$ (with edges $e_i$), we let $P^k(\tau)$ be the space
of polynomials of degree $k$.  If $\tau$ has one edge ($e_1$) on the free boundary,
we define two cubic polynomials $p^3_\alpha$ by
\begin{equation*}
p^3_1=\lambda_1p^2_1+1,\quad p^3_2=\lambda_1p^2_2+\lambda_2.
\end{equation*}
Here $\lambda_i$ are the barycentric coordinates with respect to the vertex opposite to the edge $e_i$, and $p^2_\alpha\in P^2(\tau)$ are
defined by
\begin{equation}\label{P3*}
\int_\tau(\lambda_1p^2_1+1)q\sqrt a=0\ \forall\ q\in P^2(\tau), \quad
\int_\tau(\lambda_1p^2_2+\lambda_2)q\sqrt a=0\ \forall\ q\in P^2(\tau).
\end{equation}
Note that functions in $\text{span}(p^3_\alpha)$ are orthogonal to $P^2(\tau)$ with respect to the inner product of 
$L^2(\tau)$ weighted by $\sqrt a$,
and they are linear on $e_1$, on which they can be determined by their zero and first moments weighted by $\sqrt a$.
We then define the $P^3_*(\tau)=P^2(\tau)\oplus\text{span}(p^3_\alpha)$.
If $\tau$ has two edges ($e_\alpha$) on the free boundary, we take the full $P^3(\tau)$.
A function in $P^3(\tau)$ is uniquely determined by its projection into  $P^2(\tau)$ with respect to $L^2(\tau)$ weighted by $\sqrt a$
and its zero and first moments on $e_1$ and $e_2$ weighted by $\sqrt a$. 
These will be explicitly given by the formulas \eqref{thetaI} -- \eqref{uwI-edge2} below.
Let $P^u(\tau)= P^2(\tau)$, $P^3_*(\tau)$, or $P^3(\tau)$, depending on
whether  $\tau$ has no edge, one edge, or two edges on the free boundary $\E^F_h$.
The finite element space is defined by
\begin{equation}\label{FE-space}
\begin{gathered}
\H_h=\{(\!phi, \!v, z);\text{ on each }\tau\in\T_h,\  \phi_\alpha\in P^2(\tau), v_\beta\in P^u(\tau), z\in P^u(\tau)\},\\
\V_h=\{(\N, \!eta);\  \N^{\alpha\beta},\eta^\gamma\in H^1,  \text{ on each }\tau\in\T_h,\   \N^{\alpha\beta}, \eta^\gamma\in P^1(\tau)\}.
\end{gathered}
\end{equation}
The finite element model seeks $(\!theta, \!u, w)\in \H_h$ and $(\M, \!xi)\in \V_h$ such that
\begin{equation}\label{N-fem}
\begin{gathered}
a(\!theta, \!u, w;\  \!phi, \!v, z)+b(\M, \!xi;\  \!phi, \!v, z)=\langle\!f;\!phi, \!v, z\rangle\ \ \forall\ (\!phi, \!v, z)\in \H_h, \\
b(\N, \!eta;\  \!theta, \!u, w)-\eps^2c(\M, \!xi;\  \N,\!eta)=0\ \ \forall\ (\N, \!eta)\in \V_h.
\end{gathered}
\end{equation}
This equation is in the form of \eqref{isomorphism-abs}. We shall define the norms in $\H_h$ and $\V_h$ later,
in which we prove that the finite element model \eqref{N-fem} is well posed
if the penalty constant $\C$ in \eqref{form_a} is sufficiently large, by verifying the conditions \eqref{isomorphism-condition}.
This penalty constant $\C$ could be dependent on the shell geometry and the shape regularity $\K$ of
the triangulation $\T_h$. It is, otherwise, independent of the triangulation $\T_h$ and the shell thickness.

The solution of the Naghdi  model \eqref{N-P-model} satisfies the equation \eqref{N-fem} when the test functions $\!phi$, $\!v$, $z$, $\N$, and $\!eta$
are arbitrary piecewise smooth functions, not necessarily polynomials. This says that the finite element model is consistent
with the shell model. The consistency is verified in the appendix.

Note that there is no boundary condition enforced on functions in the spaces $\H_h$ and $\V_h$.
The displacement boundary condition is enforced by boundary penalty in a consistent manner, which is Nitsche's method
\cite{A-DG}.
For displacement components on an element with one edge on $\E^F_h$, one may simply replace $P^3_*(\tau)$  by the richer $P^3(\tau)$.
This would slightly increase the complexity, but not affect the stability or accuracy of the finite element method.
The finite element space for the rotation could be taken as continuous piecewise quadratic polynomials. The error estimate will not be changed.
In this case the  terms in the bilinear forms \eqref{form_ub_a} and \eqref{form_a} that have a factor of the form $\lbra\theta_\alpha\rbra$ or 
$\lbra\phi_\alpha\rbra$ would be replaced  by zero. On $\E^D_h$, the zero boundary condition for the rotation variable also needs 
to be explicitly enforced.

\section{A discrete Korn's inequality for Naghdi shell}
\label{KornOnShell}
To prove the continuity and coerciveness of the bilinear form \eqref{form_a}
for finite element functions in a suitable space, we need to have a Korn type inequality
that bounds a discrete $H^1$ norm of the displacement and rotation variables by 
the $L^2$ norms of the bending, membrane, and transverse shear strains.

Let $H^1_h$ be the space of piecewise $H^1$ functions in which a function is independently defined on each element $\tau$, and
$u|_\tau\in H^1(\tau)$ for $\tau\in\T_h$. We define a norm in this space by 
\begin{equation}\label{N-h-norm-u}
\|u\|_{H^1_h}=\left(\sum_{\tau\in\T_h}\|u\|^2_{1,\tau}
+\sum_{e\in \E^0_h}
h^{-1}_e\int_{e}\lbra u\rbra^2\right)^{1/2}.
\end{equation}
For $\theta_\alpha$, $u_\alpha$ and $w$ in $H^1_h$, we define a norm
\begin{equation}\label{h-norm}
\|(\!theta, \!u, w)\|_{\!H^1_h\x \!H^1_h\x H^1_h}=\left[\sum_{\alpha=1,2}\left(\|\theta_\alpha\|^2_{H^1_h}+\|u_\alpha\|^2_{H^1_h}\right)+\|w\|^2_{H^1_h}\right]^{1/2}.
\end{equation}
Let $f(\!theta, \!u, w)$ be a semi-norm that is continuous with respect to this norm such that there is a $C$ only dependent on the shape regularity 
$\K$ of $\T_h$ and shell mid surface such that
\begin{equation}\label{f-continuous}
|f(\!theta, \!u, w)|\le C\|(\!theta, \!u, w)\|_{\!H^1_h\x \!H^1_h\x H^1_h}\ \forall\ (\!theta, \!u, w)\in \!H^1_h\x \!H^1_h\x H^1_h.
\end{equation}
We also assume that $f$ satisfies the condition that
if $(\!theta, \!u, w)\in  \!H^1\x \!H^1\x H^1$ defines a rigid body motion and $f(\!theta, \!u, w)=0$ then $\!theta=0$, $\!u=0$, and $w=0$.
The space of rigid body motion is a $6$-dimensional space, denoted by $RBM$. The functions $(\!theta, \!u, w)\in RBM$ if and only if
$u_\alpha\!a^\alpha+w\!a^3$ is a rigid body motion of the shell midsurface, and $\tau_\alpha(\!theta, \!u, w)=0$. This is also equivalent to that 
$\rho_{\alpha\beta}(\!theta, \!u, w)=0$, $\gamma_{\alpha\beta}(\!u, w)=0$, and $\tau_\alpha(\!theta, \!u, w)=0$ \cite{BCM}.
Therefore, 
we have another norm on the space $\!H^1_h\x\!H^1_h\x H^1_h$ defined by
\begin{multline}\label{triple-norm}
\ll(\!theta, \!u, w)\ll^2_h=
\sum_{\alpha,\beta=1,2}\left(\|\rho_{\alpha\beta}(\!theta, \!u, w)\|^2_{0,\Omega_h}+\|\gamma_{\alpha\beta}(\!u, w)\|^2_{0,\Omega_h}\right)+
\sum_{\alpha=1,2}\|\tau_\alpha(\!theta, \!u, w)\|^2_{0,\Omega_h}
\\
+\sum_{e\in \E^0_h}h^{-1}_e\left[
\sum_{\alpha=1,2}\int_{e}\left(\lbra \theta_{\alpha}\rbra^2+\lbra u_{\alpha}\rbra^2\right)
+\int_{e}\lbra w\rbra^2\right]
+f^2(\!theta,\!u, w).
\end{multline}
We have the following Korn's inequality for  piecewise functions.
\begin{thm}\label{Korn-thm}
There exists a constant $C$ that could be dependent on the shell mid-surface
and shape regularity $\K$ of the triangulation $\T_h$, but otherwise independent of the triangulation,
such that
\begin{equation}\label{Korn-inequality}
\|(\!theta, \!u, w)\|_{\!H^1_h\x\!H^1_h\x H^1_h}\le C\ll(\!theta, \!u, w)\ll_h\ \forall\ (\!theta, \!u, w)\in \!H^1_h\x\!H^1_h\x H^1_h.
\end{equation}
\end{thm}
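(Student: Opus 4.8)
The plan is to reduce the piecewise (broken) Korn inequality \eqref{Korn-inequality} to the standard continuous Korn-type inequality for the Naghdi shell, which is known on $\!H^1\x\!H^1\x H^1$: there, the $H^1$-norm of $(\!theta,\!u,w)$ is bounded by the $L^2$-norms of $\rho_{\alpha\beta}$, $\gamma_{\alpha\beta}$, $\tau_\alpha$ together with any semi-norm that vanishes only on the trivial rigid body motion (this is essentially the content of the abstract equivalence \eqref{equiva1as}, applied with the semi-norm $f$; the fact that $RBM$ is six-dimensional and that $f$ kills it only at $0$ provides the needed compactness/Peetre-type argument). The broken version must be obtained from this by controlling the failure of global $H^1$-conformity by the jump terms appearing in $\ll\cdot\ll_h$. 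The standard device for this is a \emph{conforming approximation / enrichment operator}: construct a bounded operator $E_h:\!H^1_h\x\!H^1_h\x H^1_h\to \!H^1\x\!H^1\x H^1$ (e.g. of Oswald/averaging type, replacing nodal values by averages over the surrounding elements) such that, for each component $v\in H^1_h$,
\begin{equation*}
\sum_{\tau\in\T_h}\|v-E_h v\|^2_{1,\tau}\lesssim \sum_{e\in\E^0_h}h^{-1}_e\int_e\lbra v\rbra^2 .
\end{equation*}
This is the key technical estimate and the one I expect to be the main obstacle, though it is classical in the DG literature; the only subtlety here is that the underlying elements are curvilinear (images under $\!phi$) and that the strain operators $\rho,\gamma,\tau$ involve the geometric coefficients $\Gamma^\lambda_{\alpha\beta}$, $b_{\alpha\beta}$, $b^\alpha_\beta$, so one must keep track of the fact that these coefficients are smooth (bounded in $W^{1,\infty}$) on each element and the mesh is shape regular; a scaling/Bramble–Hilbert argument on the reference element then gives the constant depending only on $\K$ and the shell mid-surface.

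The steps, in order, would be: \textbf{(1)} Fix notation and recall the continuous shell Korn inequality
$\|(\!phi,\!v,z)\|_{\!H^1\x\!H^1\x H^1}\lesssim \big(\sum_{\alpha,\beta}\|\rho_{\alpha\beta}\|_{0,\Omega}^2+\sum_{\alpha,\beta}\|\gamma_{\alpha\beta}\|_{0,\Omega}^2+\sum_\alpha\|\tau_\alpha\|_{0,\Omega}^2+f^2\big)^{1/2}$
for $(\!phi,\!v,z)\in\!H^1\x\!H^1\x H^1$, valid for any $f$ obeying \eqref{f-continuous} and trivial-only on $RBM$; cite \cite{BCM, CiarletIII}. \textbf{(2)} Construct the averaging operator $E_h$ componentwise on the piecewise-polynomial-free space $\!H^1_h\x\!H^1_h\x H^1_h$ (it suffices to treat finite-element functions, or general $H^1_h$ functions via a local $L^2$-projection onto polynomials first), establish its local $H^1$-stability and the jump estimate displayed above, by the reference-element scaling argument. \textbf{(3)} Write $(\!theta,\!u,w)=E_h(\!theta,\!u,w)+(\!theta-E_h\!theta,\!u-E_h\!u,w-E_hw)=:(\!Theta,\!U,W)+(\delta\!theta,\delta\!u,\delta w)$. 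Bound the conforming part by Step (1): $\|(\!Theta,\!U,W)\|_{\!H^1\x\!H^1\x H^1}\lesssim \ll(\!Theta,\!U,W)\ll$ where the right side uses the \emph{global} strains; then estimate $\|\rho_{\alpha\beta}(\!Theta,\!U,W)\|_{0,\Omega}$, etc., by $\|\rho_{\alpha\beta}(\!theta,\!u,w)\|_{0,\Omega_h}$ plus $\|(\delta\!theta,\delta\!u,\delta w)\|$ in the broken $H^1$-norm (here one uses that $\rho,\gamma,\tau$ are first-order differential operators with bounded coefficients, so they are bounded $\!H^1_h\x\!H^1_h\x H^1_h\to L^2$), and similarly bound $f(\!Theta,\!U,W)$ using \eqref{f-continuous} together with $f(\!theta,\!u,w)$ and the triangle inequality. \textbf{(4)} Bound the remainder $\|(\delta\!theta,\delta\!u,\delta w)\|_{\!H^1_h\x\!H^1_h\x H^1_h}$ directly by the jump sum, which is part of $\ll(\!theta,\!u,w)\ll_h$, via Step (2). \textbf{(5)} Combine: $\|(\!theta,\!u,w)\|_{\!H^1_h\x\!H^1_h\x H^1_h}\le \|(\!Theta,\!U,W)\|_{\!H^1\x\!H^1\x H^1}+\|(\delta\!theta,\delta\!u,\delta w)\|_{\!H^1_h\x\!H^1_h\x H^1_h}\lesssim \ll(\!theta,\!u,w)\ll_h$, absorbing any like terms from the left into the right if necessary.

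One careful point to flag in the write-up: in Step (3) one needs the semi-norm $f$ evaluated on the conforming piece, and since $f$ is only assumed continuous on the broken space (condition \eqref{f-continuous}) and trivial on $RBM\cap(\!H^1\x\!H^1\x H^1)$, applying the continuous Korn inequality to $(\!Theta,\!U,W)$ is legitimate; then one must show $|f(\!Theta,\!U,W)-f(\!theta,\!u,w)|\lesssim \|(\delta\!theta,\delta\!u,\delta w)\|$, which again is just \eqref{f-continuous} applied to the difference. A second point is that when the enrichment operator is applied to displacement components living in $P^u(\tau)$ (the cubically enriched spaces near the free boundary), nothing changes — $E_h$ only uses edge/vertex averages and the local polynomial degree is irrelevant to its stability. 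The genuinely nontrivial estimate remains the jump bound for $v-E_h v$ on curvilinear elements, and that is where I would spend the detailed work, reducing to the flat reference triangle by the shape-regular change of variables and invoking standard trace and Bramble–Hilbert inequalities.
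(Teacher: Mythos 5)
Your route (conforming enrichment $E_h$ plus the continuous Naghdi--Korn inequality) is a legitimate and indeed standard way to prove discrete Korn inequalities, but it proves a weaker statement than the theorem as written, and the device you offer to close the gap does not work. The crucial estimate $\sum_{\tau\in\T_h}\|v-E_hv\|^2_{1,\tau}\lesssim\sum_{e\in\E^0_h}h_e^{-1}\int_e\lbra v\rbra^2$ is a piecewise-\emph{polynomial} fact: its proof uses nodal averaging together with inverse/trace estimates that equate the $L^2(e)$ and (scaled) $H^{1/2}(e)$ norms of traces, which is only possible on spaces of bounded polynomial degree. For a general $v\in H^1_h$ it is false, even on a fixed mesh of two unit triangles: take $v=0$ on one triangle and on the other let $v$ be the harmonic extension of an oscillatory edge datum $g(t)=\epsilon\sin(Nt)$. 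Then the jump term is of size $\epsilon$, while any $w\in H^1(\Omega)$ has matching traces from the two sides, so $\|v-w\|_{1,\tau_1}+\|v-w\|_{1,\tau_2}\gtrsim\|g\|_{H^{1/2}(e)}\sim\epsilon N^{1/2}$; hence no operator $E_h$ with the displayed bound can exist on all of $H^1_h$. Your suggested repair --- first applying a local $L^2$-projection onto polynomials --- is circular: the projection error measured in the broken $H^1$ seminorm is of the size of the full gradient of $v$ on each element, which is precisely the quantity the Korn inequality is meant to control and is not bounded by the strains $\rho_{\alpha\beta},\gamma_{\alpha\beta},\tau_\alpha$, the jumps, and $f$ appearing on the right of \eqref{Korn-inequality}. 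Since Theorem~\ref{Korn-thm} is stated for all of $\!H^1_h\x\!H^1_h\x H^1_h$ (and is used in that generality to get the equivalence \eqref{Hh-ah-equiv}), this is a genuine gap, not a cosmetic one; the curvilinear geometry, by contrast, is harmless, since all norms live on the flat domain with metric weights bounded above and below.

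It is worth saying what your argument does buy. Restricted to piecewise polynomials of bounded degree (in particular to the spaces $P^2$, $P^3_*$, $P^3$ actually used in $\H_h$), steps (2)--(5) are sound and give a constructive, compactness-free proof with an explicit dependence of the constant on $\K$, provided you also justify the continuous shell Korn inequality with the abstract seminorm $f$ (that step still needs a Peetre-type compactness argument in the conforming setting, with a constant that depends on the fixed $f$). The paper instead proves the full piecewise-$H^1$ statement by a different mechanism: Brenner's Korn inequality for piecewise $H^1$ vector fields gives the bound up to $L^2$ terms, and the triangulation-uniform constant is obtained by a contradiction argument combining a compact embedding theorem for piecewise $H^1$ spaces over shape-regular families with a duality argument (an auxiliary equilibrated Naghdi stress field) identifying the $L^2$ limit as a rigid body motion annihilated by $f$. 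If you want to keep your approach, you must either restate the theorem for broken polynomial spaces of fixed degree, or replace the $L^2$ jump control of $v-E_hv$ by an argument valid for general piecewise $H^1$ functions, which is essentially what the paper's compactness proof supplies.
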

In view of the definitions \eqref{N-bending}, \eqref{N-metric}, and \eqref{N-shear}, 
this theorem implies that 
\begin{equation*}
\|(\!theta, \!u, w)\|_{\!H^1_h\x\!H^1_h\x H^1_h}\simeq \ll(\!theta, \!u, w)\ll_h.
\end{equation*}
To prove the inequality, we need a discrete Korn's inequality for piecewise functions in $H^1_h$, see (1.21) of \cite{Brenner-Korn}.
It says that
that there is a constant $C$ that might be dependent on the domain $\Omega$ and the shape regularity $\K$ of the triangulation $\T_h$, but
otherwise independent of $\T_h$ such that
\begin{equation}\label{Korn-Brenner}
\sum_{\alpha=1,2}\|u_\alpha\|^2_{H^1_h}\le C\left[\sum_{\alpha=1,2}\|u_\alpha\|^2_{0,\Omega_h}
+\sum_{\alpha, \beta=1,2}\|e_{\alpha\beta}(\!u)\|^2_{0,\Omega_h}+\sum_{e\in\E^0_h}
h^{-1}_e\int_{e}\sum_{\alpha=1,2}\lbra u_{\alpha}\rbra^2\right].
\end{equation}
Here $e_{\alpha\beta}(\!u)=(\partial_\beta u_\alpha+\partial_\alpha u_\beta)/2$ is the symmetric part of the gradient of $\!u$.
It follows from this  inequality and the definitions \eqref{N-bending}, \eqref{N-metric}, and \eqref{N-shear}
of $\rho_{\alpha\beta}$, $\gamma_{\alpha\beta}$, and $\tau_\alpha$
that there is a constant $C$ that only depends on the shell midsurface and shape regularity of $\T_h$ such that
\begin{multline}\label{Korn-thm-proof0}
\|(\!theta, \!u, w)\|_{\!H^1_h\x\!H^1_h\x H^1_h}^2\le C\left\{
\sum_{\alpha,\beta=1,2}\left(\|\rho_{\alpha\beta}(\!theta, \!u, w)\|^2_{0,\Omega_h}+\|\gamma_{\alpha\beta}(\!u, w)\|^2_{0,\Omega_h}\right)\right. \\+
\sum_{\alpha=1,2}\|\tau_\alpha(\!theta, \!u, w)\|^2_{0,\Omega_h}
+\sum_{e\in \E^0_h}h^{-1}_e\left[
\sum_{\alpha=1,2}\int_{e}\left(\lbra \theta_{\alpha}\rbra^2+\lbra u_{\alpha}\rbra^2\right)
+\int_{e}\lbra w\rbra^2\right]\\
\left.+\sum_{\alpha=1,2}\left(\|\theta_\alpha\|^2_{0,\Omega_h}+\|u_\alpha\|^2_{0,\Omega_h}\right)+\|w\|^2_{0,\Omega_h}\right\}
\ \ \forall\ (\!theta, \!u, w)\in \!H^1_h\x\!H^1_h\x H^1_h.
\end{multline}

We also need a trace theorem and a compact embedding result for functions in $H^1_h$.
\begin{lem}
Let $\tau$ be a triangle, and $e$ one of its edges. Then there is a
constant $C$ depending on the shape regularity of $\tau$ such that
\begin{equation}\label{trace}
\int_eu^2\le C\left[h_e^{-1}\int_{\tau}u^2+\sum_{\alpha=1,2}h_e\int_{\tau}|\partial_\alpha u|^2\right]
\ \ \forall\ u\in H^1(\tau).
\end{equation}
\end{lem}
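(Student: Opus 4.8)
The plan is to reduce to a reference configuration by an affine change of variables and then invoke the standard one-dimensional trace theorem on the reference element. First I would fix a reference triangle $\hat\tau$ (say, the unit triangle with vertices $(0,0)$, $(1,0)$, $(0,1)$) together with a distinguished reference edge $\hat e$, and let $F:\hat\tau\to\tau$ be the affine map taking $\hat\tau$ onto $\tau$ and $\hat e$ onto $e$, written $F(\hat x)=B\hat x+c$. On the reference triangle the ordinary trace theorem $H^1(\hat\tau)\hookrightarrow L^2(\hat e)$ gives a constant $\hat C$, depending only on $\hat\tau$ and $\hat e$, with $\int_{\hat e}\hat u^2\le \hat C\bigl(\|\hat u\|_{0,\hat\tau}^2+\|\hat\nabla\hat u\|_{0,\hat\tau}^2\bigr)$ for all $\hat u\in H^1(\hat\tau)$. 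Setting $\hat u=u\circ F$, the scaling identities $\int_e u^2 = \tfrac{|e|}{|\hat e|}\int_{\hat e}\hat u^2$, $\int_\tau u^2 = \tfrac{|\tau|}{|\hat\tau|}\int_{\hat\tau}\hat u^2$, and $\hat\nabla\hat u = B^T(\nabla u\circ F)$ convert this into an inequality on $\tau$.

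The next step is to control the resulting geometric factors by the shape regularity of $\tau$. Using $|e|\simeq h_e$, $|\tau|\simeq h_\tau^2$, $\|B\|\simeq h_\tau$, $\|B^{-1}\|\simeq h_\tau^{-1}$, and $h_e\simeq h_\tau$ for the edge of a shape-regular triangle (all with constants depending only on the shape regularity), one finds
\begin{equation*}
\int_e u^2 \lesssim \frac{h_e}{h_\tau^2}\,\|u\|_{0,\tau}^2 + \frac{h_e}{h_\tau^2}\,\|B\|^2\,\|\nabla u\|_{0,\tau}^2
\lesssim h_e^{-1}\,\|u\|_{0,\tau}^2 + h_e\,\|\nabla u\|_{0,\tau}^2,
\end{equation*}
which is exactly \eqref{trace} after writing $\|\nabla u\|_{0,\tau}^2 = \sum_{\alpha=1,2}\int_\tau |\partial_\alpha u|^2$. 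A density argument (smooth functions are dense in $H^1(\tau)$ and both sides are continuous in the $H^1(\tau)$ norm) then extends the bound from smooth $u$ to all $u\in H^1(\tau)$.

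The only delicate point is bookkeeping the dependence of the constants: one must verify that every hidden constant depends solely on the shape regularity of $\tau$ and not on its size or on $h_\tau$ separately, which is where the equivalences $\|B\|\simeq h_\tau\simeq h_e$ and $|\tau|\simeq h_\tau^2$ are used, each valid with shape-regularity-dependent constants. This is routine but is the step that has to be done carefully; there is no genuine analytic obstacle, since the substance of the estimate is entirely contained in the fixed-triangle trace theorem.
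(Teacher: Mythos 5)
Your argument is correct, and it is worth noting that the paper does not prove this lemma at all: it simply cites \cite{A-DG}, and the affine-scaling argument you give is the standard proof of that cited fact. Your bookkeeping is the real content and you have it right: with $F(\hat x)=B\hat x+c$ one has $|\det B|\simeq h_\tau^2$, $\|B\|\simeq h_\tau$, and, for a shape-regular triangle, $h_e\simeq h_\tau$ for every edge, all with constants depending only on the shape-regularity bound, which is exactly what turns the reference-element trace constant into the stated $h_e^{-1}$ and $h_e$ weights. (The density step is harmless but unnecessary, since the reference trace inequality already holds for all of $H^1(\hat\tau)$.) An alternative, more elementary route — closer in spirit to how the paper proves its boundary trace bound in Lemma~\ref{tracetheorem} — is to take the vertex $P$ opposite $e$, let $d$ be its distance to $e$, set $\!psi(x)=(x-P)/d$, and integrate $\int_e u^2=\int_\tau\div(u^2\!psi)=\int_\tau\left(2u\,\nabla u\cdot\!psi+\tfrac{2}{d}u^2\right)$; since $|\!psi|\le h_\tau/d$ and $d\simeq h_\tau\simeq h_e$ by shape regularity, Cauchy--Schwarz and the arithmetic–geometric mean inequality give \eqref{trace} directly, with explicit constants and without invoking the reference-element trace theorem. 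Either route yields the same statement and the same constant dependence, so your proposal stands as a complete proof.
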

This can be found in \cite{A-DG}.
For piecewise functions in $H^1_h$, we have the following trace theorem.
\begin{lem}\label{tracetheorem}
There
exists a constant $C$ depending on $\Omega$ and the shape regularity
$\K$ of $\T_h$, but otherwise independent of the triangulation such that
\begin{equation}\label{Omega-trace}
\|u\|_{L^2(\partial\Omega)}\le C \|u\|_{H^1_h}\ \ \forall\ u\in H^1_h.
\end{equation}
\end{lem}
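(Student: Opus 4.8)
\textbf{Proof of Lemma~\ref{tracetheorem} (proposal).}
The plan is to prove \eqref{Omega-trace} by the classical ``integrate a flux'' trace argument, adapted to the broken space $H^1_h$: we test $u^2$ against a fixed auxiliary vector field, integrate by parts element by element, and check that the inter-element jump terms produced by the broken integration by parts are exactly controlled by the jump penalty already present in $\|\cdot\|_{H^1_h}$. First I would fix, once and for all, a Lipschitz vector field $\beta:\overline\Omega\to\R^2$ with $\beta\cdot\nu\ge 1$ on $\partial\Omega$, where $\nu$ denotes the outward unit normal; such a field exists for any polygon (in fact for any bounded Lipschitz domain) and depends only on $\Omega$, so $\|\beta\|_{0,\infty,\Omega}$ and $\|\div\beta\|_{0,\infty,\Omega}$ are constants determined by $\Omega$ alone.

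Next, for $u\in H^1_h$ one has $u^2|_\tau\in W^{1,1}(\tau)$ with $\nabla(u^2)=2u\nabla u$, so the divergence theorem applies to $u^2\beta$ on each $\tau\in\T_h$. Summing over $\tau$ and collecting the contributions of each interior edge $e\in\E^0_h$ shared by $\tau_1,\tau_2$ (using $\nu_{\tau_1}=-\nu_{\tau_2}$ on $e$ and $(u|_{\tau_1})^2-(u|_{\tau_2})^2=(u|_{\tau_1}-u|_{\tau_2})(u|_{\tau_1}+u|_{\tau_2})$), one obtains
\[
\int_{\partial\Omega}u^2(\beta\cdot\nu)=\sum_{\tau\in\T_h}\int_\tau\bigl(u^2\div\beta+2u\,\beta\cdot\nabla u\bigr)-2\sum_{e\in\E^0_h}\int_e(\beta\cdot\nu_e)\,\lbra u\rbra\,\lbrac u\rbrac,
\]
where $\nu_e$ is a fixed unit normal to $e$ and $\lbrac u\rbrac$ the average of the two traces of $u$ on $e$. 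Since $\beta\cdot\nu\ge 1$ on $\partial\Omega$, the left side dominates $\|u\|^2_{L^2(\partial\Omega)}$. The bulk term on the right is bounded by $C\bigl(\|u\|^2_{0,\Omega_h}+\|u\|_{0,\Omega_h}\|\nabla u\|_{0,\Omega_h}\bigr)\le C\|u\|^2_{H^1_h}$, with $C$ depending only on $\beta$ (hence on $\Omega$).

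It remains to bound the jump sum. On each $e\in\E^0_h$ I would write
\[
\Bigl|\int_e(\beta\cdot\nu_e)\,\lbra u\rbra\,\lbrac u\rbrac\Bigr|\le C\Bigl(h_e^{-1}\int_e\lbra u\rbra^2\Bigr)^{1/2}\Bigl(h_e\int_e\lbrac u\rbrac^2\Bigr)^{1/2},
\]
and then apply the elementwise trace inequality \eqref{trace} to each trace of $u$ on $e$, together with $h_e\le\operatorname{diam}\Omega$ and shape regularity ($h_e\simeq h_{\tau_i}$), to get $h_e\int_e\lbrac u\rbrac^2\le C(\|u\|^2_{1,\tau_1}+\|u\|^2_{1,\tau_2})$. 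Summing over $e\in\E^0_h$, using Cauchy--Schwarz and the fact that every triangle has at most three edges, the jump sum is bounded by $C\bigl(\sum_{e\in\E^0_h}h_e^{-1}\int_e\lbra u\rbra^2\bigr)^{1/2}\bigl(\sum_{\tau\in\T_h}\|u\|^2_{1,\tau}\bigr)^{1/2}\le C\|u\|^2_{H^1_h}$. Combining the three estimates yields $\|u\|^2_{L^2(\partial\Omega)}\le C\|u\|^2_{H^1_h}$ with $C$ depending only on $\Omega$ and the shape regularity $\K$. I do not expect a genuine obstacle here: the only ingredient beyond bookkeeping is the existence of the vector field $\beta$ in the first step, which is standard; everything else is the broken-space version of the textbook trace argument, with \eqref{trace} absorbing the edge contributions.
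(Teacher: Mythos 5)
Your proposal is correct and follows essentially the same route as the paper: test $u^2$ against a fixed auxiliary vector field whose normal component is bounded below on $\partial\Omega$, integrate by parts element by element, write the interior-edge contribution as $\lbra u\rbra\lbrac u\rbrac$, and absorb it via weighted Cauchy--Schwarz together with the elementwise trace inequality \eqref{trace}. The only cosmetic difference is the auxiliary field: you take a globally Lipschitz $\beta$ with $\beta\cdot\nu\ge 1$, while the paper constructs a piecewise smooth field with continuous normal component and exact value $1$ on $\partial\Omega$; both are standard and the estimates are identical.
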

\begin{proof}
Let $\!phi$ be a piecewise smooth
vector field on $\Omega$ whose normal component is continuous across any straight line segment, and
such that $\!phi\cdot\bar\!n=1$ on $\partial\Omega$. (The piecewise smoothness of $\!phi$ is not
associated with the triangulation $\T_h$. A construction of such vector field is given below.)
On each element
$\tau\in\T_h$, we have
\begin{equation*}
\int_{\partial\tau}u^2\!phi\cdot\bar\!n=\int_\tau \div(u^2\!phi)=
\int_\tau(2u\nabla u\cdot\!phi+u^2\div\!phi).
\end{equation*}
Summing up over all elements of $\T_h$, we get
\begin{equation*}
\int_{\partial\Omega}u^2=-\sum_{e\in\E^0_h}\int_e\lbra u^2\!phi\rbra
+\int_{\Omega_h}(2u\nabla u\cdot\!phi+u^2\div\!phi).
\end{equation*}
If $e$ is the border between the elements
$\tau_1$ and
$\tau_2$ with outward normals $\bar\!n_1$ and $\bar\!n_2$, then
$\lbra u^2\!phi\rbra=u^2_1\!phi_1\cdot\bar\!n_1+u^2_2\!phi_2\cdot\bar\!n_2$,
where $u_1$ and $u_2$ are restrictions of $u$
on $\tau_1$ and $\tau_2$, respectively.  It is noted that although $\!phi$ may be discontinuous
across $e$, it normal component is continuous, i.e., $\!phi_1\cdot\bar\!n_1+\!phi_2\cdot\bar\!n_2=0$.
On the edge $e$, we have
$|\lbra u^2\!phi\rbra|\le|\lbra u^2\rbra|\|\!phi\|_{0,\infty,\Omega}$.
Here, $|\lbra u^2\rbra|=|u_1^2-u_2^2|$. It is noted that $|\lbra u^2\rbra|=2|\lbra u\rbra\lbrac u\rbrac|$,
with $\lbrac u\rbrac=(u_1+u_2)/2$ being the average. We have
\begin{multline}\label{jump-on-e}
\int_e|\lbra u^2\!phi\rbra|\le
2|\!phi|_{0,\infty,\Omega}
\left[|e|^{-1}\int_e\lbra u\rbra^2\right]^{1/2}\left[|e|\int_e\lbrac u\rbrac^2\right]^{1/2}\\
\le C
|\!phi|_{0,\infty,\Omega}\left[|e|^{-1}\int_e\lbra u\rbra^2\right]^{1/2}
\left[\sum_{\delta=1,2}\left(\int_{\tau_\delta}u^2+|e|^2\int_{\tau_\delta}|\nabla u|^2\right)\right]^{1/2}.
\end{multline}
Here, $C$ only depends on the shape regularity of $\tau_1$ and $\tau_2$,
and we used the trace
estimate \eqref{trace}.
It then follows from the Cauchy--Schwarz inequality that
\begin{equation*}
\|u\|^2_{L^2(\partial\Omega)}\le C(|\!phi|_{0,\infty,\Omega}+|\div\!phi|_{0,\infty,\Omega})
\left[\|u\|^2_{L^2(\Omega)}+\int_{\Omega_h}|\nabla u|^2
+\sum_{e\in\E^0_h}\frac{1}{|e|}\int_e\lbra u\rbra^2\right].
\end{equation*}
Here the constant $C$ only depends on the shape regularity of $\T_h$.
The dependence on $\Omega$ of the $C$ in \eqref{Omega-trace}
is hidden in the $\!phi$ in the above inequality.
\end{proof}
%
%
We describe a construction of the vector field $\!phi$ used in the proof.
On the $xy$-plane, we consider a triangle $OAB$ with the origin being its
vertex $O$. Let the distance from $O$ to the side $AB$ be $H$. Then the field
$\!psi(x, y)=\langle x, y\rangle/H$ has the property that $\!psi\cdot\bar\!n=1$ on $AB$
and $\!psi\cdot\bar\!n=0$ on $OA$ and $OB$. Also $|\!psi|_{0,\infty}=\max\{|OA|, |OB|\}/H$
and $\div\!psi=2/H$.
For each straight segment of $\partial\Omega$, we define a triangle
with the straight segment being a side
whose opposite vertex is in $\Omega$, then we define a vector field on this triangle
as on the triangle $OAB$ with $AB$ being the straight side.
We need to assure that all such triangles do not overlap. We then piece together
all these vector fields and fill up the remaining part of the domain by a zero
vector field. This defines the desired vector field used in the proof.

The following compact embedding theorem can be derived from a result in \cite{DiPietro},
for which a direct proof can be found in \cite{compact}.
\begin{lem}\label{uniformcompactembedding}
Let $\T_{h_i}$ be a (infinite) class of shape regular but not necessarily quasi-uniform
triangulations of the polygonal domain $\Omega$, with a shape regularity constant $\K$.
For each $i$, let $H^1_{h_i}$ be the space of piecewise $H^1$ functions, subordinated to the
triangulation $T_{h_i}$,
equipped with the norm \eqref{N-h-norm-u}.
Let $\{u_i\}$ be a bounded sequence such that $u_i\in H^1_{h_i}$ for each $i$. I.e.,
there is a constant $C$, such that $\|u_i\|_{H^1_{h_i}}\le C$ for all $i$.
Then, the sequence $\{u_i\}$ has a convergent subsequence in $L^2$.
\end{lem}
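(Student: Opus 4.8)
The plan is to prove this via the Riesz--Fr\'echet--Kolmogorov compactness criterion in $L^2(\Omega)$ (this is the discrete compactness theorem of \cite{DiPietro}, with direct proof in \cite{compact}; one could alternatively just check that \eqref{N-h-norm-u} dominates the broken $W^{1,2}$ norm used there and invoke it). Writing $\Omega_\delta=\{x\in\Omega:\operatorname{dist}(x,\partial\Omega)>\delta\}$, it suffices to establish, uniformly in $i$: (i) $\|u_i\|_{L^2(\Omega)}\le C$; (ii) $\sup_{|y|\le\delta}\|u_i(\cdot+y)-u_i\|_{L^2(\Omega_\delta)}\to0$ as $\delta\to0$; and (iii) $\|u_i\|_{L^2(\Omega\setminus\Omega_\delta)}\to0$ as $\delta\to0$. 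Granting these, $\{u_i\}$ has compact closure in $L^2(\Omega)$ and the desired convergent subsequence exists. Item (i) is immediate since $\|u_i\|_{L^2(\Omega)}\le\|u_i\|_{H^1_{h_i}}\le C$.

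The core estimate, which gives (ii), is that for $|y|\le\operatorname{diam}\Omega$,
\begin{equation*}
\int_{\{x\,:\,[x,x+y]\subset\Omega\}}|u_i(x+y)-u_i(x)|^2\,dx\ \le\ C(\Omega,\K)\,|y|\,\|u_i\|_{H^1_{h_i}}^2 .
\end{equation*}
Fixing the direction $y/|y|$, for almost every $x$ the segment $[x,x+y]$ traverses a finite chain of elements $\tau^{(1)},\dots$ and crosses interior edges $e^{(1)},\dots,e^{(m)}$ at points $p^{(j)}$; since $u_i$ restricted to almost every line inside each element is absolutely continuous, $u_i(x+y)-u_i(x)=\sum_j\int_{[x,x+y]\cap\tau^{(j)}}\nabla u_i\cdot d\ell+\sum_j(\pm\lbra u_i\rbra(p^{(j)}))$, so by a weighted Cauchy--Schwarz inequality on the second sum,
\begin{equation*}
|u_i(x+y)-u_i(x)|^2\le 2|y|^2\!\int_0^1\!|\nabla u_i(x+ty)|^2\,dt+2\Big(\sum_j h_{e^{(j)}}\Big)\Big(\sum_j h_{e^{(j)}}^{-1}\lbra u_i\rbra(p^{(j)})^2\Big).
\end{equation*}
Integrating over $\{x:[x,x+y]\subset\Omega\}$, the first term contributes at most $2|y|^2\sum_{\tau\in\T_{h_i}}\|\nabla u_i\|_{0,\tau}^2$; for the second, bound $\sum_j h_{e^{(j)}}\le S(y):=\sup_x\sum_{e\,:\,[x,x+y]\cap e\ne\emptyset}h_e$, and for each interior edge $e$ change variables by $(\sigma,t)\in e\times[0,1]\mapsto \sigma-ty$ (Jacobian $|y\cdot n_e|\le|y|$, with $n_e$ the unit normal of $e$) to obtain $\int\lbra u_i\rbra(p_e(x))^2\,dx\le|y|\int_e\lbra u_i\rbra^2$, whence the second term is at most $2S(y)\,|y|\sum_{e\in\E^0_{h_i}}h_e^{-1}\int_e\lbra u_i\rbra^2$. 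Thus everything reduces to the purely geometric claim that $S(y)\le C(\K)(|y|+\operatorname{diam}\Omega)$ \emph{uniformly over the whole family} $\{\T_{h_i}\}$; granting it, the displayed estimate follows (using $|y|\le\operatorname{diam}\Omega$), and (ii) holds with modulus $C\delta^{1/2}$ because $\Omega_\delta\subset\{x:[x,x+y]\subset\Omega\}$ when $|y|<\delta$.

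For (iii) I would run the same fibering argument on the collar $\Omega\setminus\Omega_\delta$, sweeping it by inward normal segments of length $\delta$ issued from $\partial\Omega$; the identical telescoping and edge change-of-variables bookkeeping, combined with the trace estimate $\|u_i\|_{L^2(\partial\Omega)}\le C\|u_i\|_{H^1_{h_i}}$ of Lemma~\ref{tracetheorem}, yields $\|u_i\|_{L^2(\Omega\setminus\Omega_\delta)}^2\le C(\Omega,\K)\,\delta\,\|u_i\|_{H^1_{h_i}}^2$, which is $\le C\delta$ uniformly in $i$. As for the geometric claim on $S(y)$: assigning to each crossed edge the element the segment \emph{enters} across it defines an injection onto a set of distinct elements meeting the segment; shape regularity forces every element to be roundish and any two adjacent elements to have comparable size, so along the straight segment the element-size function is geometrically graded, and the crossed-edge lengths sum, scale by scale, to a geometric series dominated by $|y|+\operatorname{diam}\Omega$.

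I expect this last geometric estimate to be the main obstacle: it is precisely where the failure of quasi-uniformity must be confronted, and although elementary it needs a careful scale-by-scale argument on shape-regular meshes (and it is exactly the place where the $h_e^{-1}$ weighting in \eqref{N-h-norm-u} is essential — without it the sequence need not be precompact). The remaining ingredients are either standard (Kolmogorov's criterion) or direct reworkings of the divergence/trace computations already carried out in the proof of Lemma~\ref{tracetheorem}; and if one prefers to bypass the geometric lemma altogether, the statement can be deduced from \cite{DiPietro} and \cite{compact} once their hypotheses are checked, as indicated just before the lemma.
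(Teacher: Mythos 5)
The paper itself offers no proof of this lemma --- it is quoted from \cite{DiPietro} with a direct proof in \cite{compact} --- so your fallback option (check the hypotheses of those references and cite them) is exactly what the paper does, and is unobjectionable. Your self-contained Kolmogorov argument, however, has a genuine gap precisely where you flag it: the claim $S(y)=\sup_x\sum_{e:\,[x,x+y]\cap e\neq\emptyset}h_e\le C(\K)(|y|+\operatorname{diam}\Omega)$ is not proved, and the justification you sketch does not work. Adjacency in a conforming shape-regular mesh only gives that consecutive crossed elements have \emph{comparable} sizes (bounded ratio per step); it does not make the element-size function ``geometrically graded'' along the segment, and there is no geometric series: a segment can cross arbitrarily many elements of essentially the same size, and the naive scale-by-scale count (at dyadic size $d$ the segment meets $O(|y|/d+1)$ elements, contributing $O(|y|+d)$ in crossed-edge length) only yields $S(y)\lesssim |y|\log(\operatorname{diam}\Omega/h_{\min})+\operatorname{diam}\Omega$, which is useless uniformly over a refining family. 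This is exactly the point where non-quasi-uniformity bites, and your proof stands or falls with it.

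The claim itself appears to be true for conforming shape-regular triangulations, but it needs a genuine covering/charging argument rather than grading: use that elements sharing an edge or a vertex have comparable diameters, so the patch of elements touching a crossed edge $e$ consists of elements of size $\simeq h_e$ and covers a ball $B(p_e,c(\K)h_e)$ around the crossing point $p_e$; hence the segment spends length $\gtrsim h_e$ inside that patch, and each point of the segment is so charged by at most $C(\K)$ edges, giving $\sum_e h_e\lesssim |y|+\operatorname{diam}\Omega$ after treating the $O(1)$ edges near the segment's endpoints separately. If you prefer to avoid this geometry altogether, the standard route (essentially that of \cite{DiPietro}) is to prove the translate estimate in $L^1$ instead of $L^2$: there the jump contribution is simply $|y|\sum_e\int_e\lbra u\rbra$, and $\sum_e\int_e\lbra u\rbra\le\bigl(\sum_e h_e^{-1}\int_e\lbra u\rbra^2\bigr)^{1/2}\bigl(\sum_e h_e^2\bigr)^{1/2}\le C(\K,\Omega)\|u\|_{H^1_h}$ because $\sum_e h_e^2\lesssim|\Omega|$ with no quasi-uniformity; this gives compactness in $L^1$, which is upgraded to $L^2$ by a discrete Sobolev embedding (boundedness in $L^q$, $q>2$) and interpolation. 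Your items (i) and (iii) and the edge change-of-variables computation are fine; only the weighted Cauchy--Schwarz step in $L^2$ forces the unproven bound on $S(y)$.
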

\begin{proof}[Proof of Theorem~\ref{Korn-thm}]
From \eqref{Korn-thm-proof0} and the definition \eqref{triple-norm}, it is trivial to see that
that there is a constant $C$ such that
\begin{multline}\label{Korn-thm-proof1}
\|(\!theta, \!u, w)\|_{\!H^1_h\x\!H^1_h\x H^1_h}^2\le C\left[\ll(\!theta,\!u, w)\ll_h^2+\sum_{\alpha=1,2}\left(\|\theta_\alpha\|^2_{0,\Omega_h}+\|u_\alpha\|^2_{0,\Omega_h}\right)
+\|w\|^2_{0,\Omega_h}\right]\\
\forall\ (\!theta, \!u, w)\in \!H^1_h\x\!H^1_h\x H^1_h.
\end{multline}
On a fixed triangulation $\T_h$, it then follows from the Rellich--Kondrachov
compact embedding theorem and Peetre's lemma (Theorem 2.1, page 18 in \cite{Raviart}) that
there is a constant $C_{\T_h}$ such that
\begin{equation*}
\|(\!theta, \!u, w)\|_{\!H^1_h\x\!H^1_h\x H^1_h}\le C_{\T_h}\ll(\!theta,\!u, w)\ll_h\ \ \forall\ (\!theta, \!u, w)\in \!H^1_h\x\!H^1_h\x H^1_h.
\end{equation*}
We need to show that for a class  of shape regular triangulations, such $C_{\T_h}$ has a upper bound that only
depends on the shape regularity $\K$ of the whole class. Otherwise, there exists a sequence
of triangulations $\{\T_{h_n}\}$ and an associated sequence of functions $(\!theta^n, \!u^n, w^n)$ in
$\!H^1_{h_n}\x\!H^1_{h_n}\x H^1_{h_n}$
such that
\begin{equation*}
\|(\!theta^n, \!u^n, w^n)\|_{\!H^1_{h_n}\x\!H^1_{h_n}\x H^1_{h_n}}=1 \text{ and  }\ll(\!theta^n,\!u^n, w^n)\ll_{h_n}\le 1/n.
\end{equation*}
It follows from Lemma~\ref{uniformcompactembedding}
that there is a subsequence,
still denoted by $(\!theta^n, \!u^n, w^n)$, that converges to $(\!theta^0, \!u^0, w^0)$ in $\!L^2\x\!L^2\x L^2$.
We show that this limit defines a rigid body motion and it is zero, which will lead to a contradiction.

First, we show that $w^0$ is actually in $H^1$ and we have that
$\partial_\alpha w^0+\theta^0_\alpha+b^\beta_\alpha u^0_\beta=0$.
For a compactly supported smooth function $\phi$, we have
\begin{equation*}
\int_{\Omega}w^0\partial_\alpha\phi=\lim_{n\to\infty}\int_{\Omega}w^n\partial_\alpha\phi.
\end{equation*}
For each $n$,
\begin{equation*}
\int_{\Omega}w^n\partial_\alpha\phi=
-\int_{\Omega_{h_n}}(\partial_\alpha w^n+\theta^n_\alpha+b^\beta_\alpha u^n_\beta)\phi
+\int_{\Omega_{h_n}}(\theta^n_\alpha+b^\beta_\alpha u^n_\beta)\phi+
\sum_{e\in\E^0_{h_n}}\int_e\lbra w^n\rbra_{\bar n_\alpha}\phi.
\end{equation*}
Here, $\langle \bar n_1, \bar n_2\rangle$ is the unit normal to the edge $e$ and
$\lbra w^n\rbra_{\bar n_\alpha}$ is the jump of $w^n$ over $e$ with respect to $\bar n_\alpha$.
The sum of the first and third terms in the right hand side is bounded by
\begin{equation*}
C\|\tau_\alpha(\!theta^n,\!u^n,w^n)\|_{0,\Omega_{h_n}}\|\phi\|_{0,\Omega}+
C\left[\sum_{e\in\E^0_{h_n}}h^{-1}_e\int_e\lbra w^n\rbra^2\right]^{1/2}\left[|\phi|^2_{0,\Omega}+
\sum_{\tau\in\T_{h_n}}h^2_\tau|\phi|^2_{1,\tau}\right]^{1/2},
\end{equation*}
where $C$ depends on $\K$ only. Since $\ll(\!theta^n,\!u^n, w^n)\ll_h\to 0$, this upper bound tends to zero as $n\to\infty$.
The second term converges to 
$\int_{\Omega}(\theta^0_\alpha+b^\beta_\alpha u^0_\beta)\phi$.
Thus we have
\begin{equation*}
\int_{\Omega}w^0\partial_\alpha\phi=\int_{\Omega}(\theta^0_\alpha+b^\beta_\alpha u^0_\beta)\phi.
\end{equation*}
This shows that $w^0\in H^1$ and 
\begin{equation}\label{shear=0}
\partial_\alpha w^0+\theta^0_\alpha+b^\beta_\alpha u^0_\beta=0.
\end{equation}

Next we show  that $u^0_\alpha\!a^\alpha+w^0\!a^3$ is a rigid body motion of the shell mid-surface.
Let $Q^i$ be a smooth surface force field that annihilates $RBM$, i.e.,
\begin{equation*}
\int_{\t\Omega}(Q^{\alpha}u_{\alpha}+Q^3w)=0 \ \forall\ (\!theta, \!u, w)\in RBM.
\end{equation*}
Such a force field is an admissible loading on the shell with totally free boundary.
We let $m^{\alpha\beta}$, $n^{\alpha\beta}$, and $k^\alpha$  be a stress resultant, stress couple, and transverse shear resultant  equilibrating $Q^i$ such that
\begin{multline*}
\int_{\t\Omega}\left[n^{\alpha\beta}\rho_{\alpha\beta}(\!phi, \!v, z)+m^{\alpha\beta}\gamma_{\alpha\beta}(\!v, z)+k^\alpha\tau_\alpha(\!phi, \!v, z)\right]\\
=
\int_{\t\Omega}(Q^\alpha v_{\alpha}+Q^3z)\ \ \forall\ (\!phi, \!v, z)\in \!H^1\x\!H^1\x H^1.
\end{multline*}
One can choose $m^{\alpha\beta}$, $n^{\alpha\beta}$, and $k^\alpha$ in the following manner.
We consider a Naghdi shell with the mid surface $\t\Omega$, but of unit thickness $\eps=1$, loaded by
$Q^i$, and free on its entire lateral boundary. The shell model has a unique solution $(\!theta_1, \!u_1, w_1)$
in the quotient
space $(\!H^1\x\!H^1\x H^1)/RBM$. We then take
\begin{equation*}
n^{\alpha\beta}=\frac13a^{\alpha\beta\lambda\gamma}\rho_{\lambda\gamma}(\!theta_1, \!u_1, w_1),\quad
m^{\alpha\beta}=a^{\alpha\beta\lambda\gamma}\gamma_{\lambda\gamma}(\!u_1, w_1),\quad
k^\alpha=\kappa\mu a^{\alpha\beta}\tau_\beta(\!theta_1, \!u_1, w_1).
\end{equation*}
Since this $(\!theta_1, \!u_1, w_1)$ is the solution of a rather regular elliptic equation, under our assumption on the regularity of the shell model, 
we have $(\!theta_1, \!u_1, w_1)\in \!H^2\x\!H^2\x H^2$.
Using the Green's theorem on surfaces \eqref{Green}, in view of the definitions \eqref{N-bending}, \eqref{N-metric}, and \eqref{N-shear},
for an element $\tau\in\T_{h_n}$, we have
\begin{multline}\label{shell-equilibrium}
\int_{\t\tau}\left[n^{\alpha\beta}\rho_{\alpha\beta}(\!phi, \!v, z)+m^{\alpha\beta}\gamma_{\alpha\beta}(\!v, z)+k^\alpha\tau_\alpha(\!phi, \!v, z)\right]
=
\\\int_{\t\tau}\left\{\left(-n^{\alpha\beta}|_{\beta}+k^\alpha\right)\phi_{\alpha}
+
\left[-m^{\alpha\beta}|_{\beta}+\left(n^{\lambda\gamma}b^{\alpha}_{\lambda}\right)|_\gamma+
k^\beta b^\alpha_\beta
\right]v_{\alpha}+
\left(-k^\alpha|_\alpha+n^{\alpha\beta}c_{\alpha\beta}-m^{\alpha\beta}b_{\alpha\beta}
\right)z\right\}\\
+\int_{\partial\t\tau}\left[n^{\alpha\beta}n_{\beta}\phi_{\alpha}+
\left(m^{\alpha\beta}n_{\beta}-n^{\lambda\gamma}b^{\alpha}_{\lambda}n_{\gamma}
\right)v_{\alpha}+
k^\alpha n_{\alpha}z\right].
\end{multline}
This identity is also valid when $\tau$ is replaced by $\Omega$, and from that we get the following equilibrium equations and boundary conditions.
\begin{equation}\label{m-n-equilibrium}
\begin{gathered}
-n^{\alpha\beta}|_{\beta}+k^\alpha=0\ \text{ in }\Omega,\\
-m^{\alpha\beta}|_{\beta}+\left(n^{\lambda\gamma}b^{\alpha}_{\lambda}\right)|_\gamma+
k^\beta b^\alpha_\beta=Q^\alpha\ \text{ in }\Omega,\\
-k^\alpha|_\alpha+n^{\alpha\beta}c_{\alpha\beta}-m^{\alpha\beta}b_{\alpha\beta}=Q^3\ \text{ in }\Omega,\\
n^{\alpha\beta}n_{\beta}=0,\quad
m^{\alpha\beta}n_{\beta}-n^{\lambda\gamma}b^{\alpha}_{\lambda}n_{\gamma}=0,\quad
k^\alpha n_{\alpha}=0\ \text{ on }\partial\Omega.
\end{gathered}
\end{equation}
Since $(\!u^n, w^n)\to (\!u^0, w^0)$ in $L^2$, we have
\begin{equation*}
\int_{\t\Omega}(Q^{\alpha}u^0_{\alpha}+Q^3w^0)
=\lim_{n\to\infty}\int_{\t\Omega_{h_n}}(Q^\alpha u^n_{\alpha}+Q^3w^n).
\end{equation*}
For a given $n$, by using \eqref{shell-equilibrium} on each element of $\t\tau\subset\t\Omega_{h_n}$, and summing up, we get
\begin{multline*}
\int_{\t\Omega_{h_n}}(Q^\alpha u^n_{\alpha}+Q^3w^n)
=\int_{\t\Omega_{h_n}}\left\{
\theta^n_\alpha\left(-n^{\alpha\beta}|_{\beta}+k^\alpha\right)+
u^n_{\alpha}
\left[
-m^{\alpha\beta}|_{\beta}+\left(n^{\lambda\gamma}b^{\alpha}_{\lambda}\right)|_\gamma+
k^\beta b^\alpha_\beta
\right]\right\}\\
\hfill
+\int_{\t\Omega_{h_n}}w^n
\left(
-k^\alpha|_\alpha+n^{\alpha\beta}c_{\alpha\beta}-m^{\alpha\beta}b_{\alpha\beta}
\right)\\
=\sum_{\tau\in\T_{h_n}}
\int_{\t\tau}\left[n^{\alpha\beta}\rho_{\alpha\beta}(\!theta^n, \!u^n, w^n)+m^{\alpha\beta}\gamma_{\alpha\beta}(\!u^n, w^n)
+k^\alpha\tau_\alpha(\!theta^n, \!u^n, w^n)\right] \hfill \\
\hfill -\sum_{\tau\in\T_{h_n}}\left[
\int_{\partial\t\tau}n^{\alpha\beta}n_{\beta}\theta^n_{\alpha}+
\int_{\partial\t\tau}
\left(m^{\alpha\beta}n_{\beta}-n^{\lambda\gamma}b^{\alpha}_{\lambda}n_{\gamma}
\right)u^n_{\alpha}+
\int_{\partial\t\tau}k^\alpha n_{\alpha}w^n\right].
\end{multline*}
In view of the boundary condition in \eqref{m-n-equilibrium}, the second line in the above 
right hand side is equal to
\begin{equation*}
-\sum_{e\in\E^0_{h_n}}\left[\int_{\t e}
n^{\alpha\beta}\lbra \theta^n_\alpha\rbra_{n_\beta}+\int_{\t e}
\left(m^{\alpha\beta}-n^{\lambda\beta}b^{\alpha}_{\lambda}
\right)\lbra u^n_{\alpha}\rbra_{n_\beta}+
\int_{\t e}k^\alpha\lbra w^n\rbra_{n_\alpha}\right].
\end{equation*}
We apply the trace estimate \eqref{trace} to each of the edges, and use Cauchy--Schwarz inequality,  to obtain
the following estimate.
\begin{multline*}
\left|\int_{\t\Omega_{h_n}}(Q^\alpha u^n_{\alpha}+Q^3w^n)\right|
\le C\ll(\!theta^n, \!u^n, w^n)\ll_{h_n}\\
\left[\sum_{\tau\in\T_{h_n}; \alpha, \beta=1,2}\left(| n^{\alpha\beta}|^2_{0,\tau}
+h^2_\tau|n^{\alpha\beta}|^2_{1,\tau}
+| m^{\alpha\beta}|^2_{0,\tau}+h^2_\tau| m^{\alpha\beta}|^2_{1,\tau}+
|k^{\alpha}|^2_{0,\tau}+h^2_\tau|k^{\alpha}|^2_{1,\tau}
\right)
\right]^{1/2}.
\end{multline*}
Since $Q^i$, $m^{\alpha\beta}$, $n^{\alpha\beta}$, and $k^\alpha$  are independent of $n$,
and $\lim_{n\to\infty}\ll(\!theta^n, \!u^n, w^n)\ll_{h_n}=0$, we have
$\int_{\t\Omega}(Q^\alpha u^0_{\alpha}+Q^3w^0)=0$. Thus $u^0_\alpha\!a^\alpha+w^0\!a^3$ is a rigid body motion 
of the shell midsurface.  This, together with \eqref{shear=0}, shows that $(\!theta^0, \!u^0, w^0)\in RBM$.

Finally, we show that $(\!theta^0, \!u^0, w^0)=0$.
It follows from \eqref{Korn-thm-proof0} and $\ll(\!theta^n,  \!u^n, w^n)\ll_{h_n}\to 0$ that
$\lim_{n\to\infty}\|(\!theta^n-\!theta^0, \!u^n-\!u^0, w^n-w^0)\|_{\!H^1_h\x\!H^1_h\x H^1_h}=0$. Since $f$ is uniformly continuous
with respect to the norm $\|\cdot\|_{\!H^1\x\!H^1_h\x H^1_h}$ and since $f(\!theta^n, \!u^n, w^n)\to 0$ ($f$ is a part
in the triple norm), we see
$f(\!theta^0, \!u^0, w^0)=0$. Thus $(\!theta^0, \!u^0, w^0)=0$. Therefore, 
\begin{equation*}
\lim_{n\to\infty}\|(\!theta^n, \!u^n, w^n)\|_{\!H^1_h\x\!H^1_h\x H^1_h}=0,
\end{equation*}
which is contradict to the assumption that 
$\|(\!theta^n, \!u^n, w^n)\|_{\!H^1_h\x\!H^1_h\x H^1_h}=1$.
\end{proof}


As an example, we take
\begin{equation*}
f(\!theta, \!u, w)=
\left[\sum_{e\in\E^D_h}\int_{e}\sum_{\alpha=1,2}\theta^2_\alpha+\sum_{e\in\E^{S}_h\cup\E^D_h}
\left(\int_{e}\sum_{\alpha=1,2}u^2_{\alpha}+\int_{e}w^2\right)
\right]^{1/2}.
\end{equation*}
It follows from Lemma~\ref{tracetheorem} that there is a $C$ only dependent on $\K$ such that
the continuity condition \eqref{f-continuous} is satisfied by this $f$.
Under the assumption that  the measure of $\partial^D\Omega$ is positive,
it is verified in \cite{BCM} that if $(\!theta, \!u, w)\in RBM$ and  $f(\!theta, \!u, w)=0$ then $\!theta=0$, $\!u=0$, and $w=0$.
With this $f$ in the Korn's inequality \eqref{Korn-inequality}, 
we add boundary penalty term
\begin{equation*}
\sum_{e\in\E^D_h}\int_{e}h^{-1}_e\sum_{\alpha=1,2}\theta^2_\alpha+\sum_{e\in\E^{S}_h\cup\E^D_h}
\left(h^{-1}_e\int_{e}\sum_{\alpha=1,2}u^2_{\alpha}+h^{-1}_e\int_{e}w^2\right)
\end{equation*}
to the squares of both sides of \eqref{Korn-inequality}. We then have the equivalence that there is a constant $C$ that could be dependent
on the shell midsurface and shape regularity $\K$ of the triangulation $\T_h$, but otherwise independent of the triangulation, such that
\begin{equation}\label{Hh-ah-equiv}
C^{-1}\|(\!theta, \!u, w)\|_{a_h}\le \|(\!theta,\!u, w)\|_{\H_h}\le C\|(\!theta, \!u, w)\|_{a_h} \ \forall\ (\!theta, \!u, w) \in \!H^1_h\x\!H^1_h\x H^1_h.
\end{equation}
Here
\begin{multline}\label{Hh-norm}
\|(\!theta, \!u, w)\|^2_{\H_h}:=\sum_{\tau\in\T_h}\left[\sum_{\alpha=1,2}\left(\|\theta_\alpha\|^2_{1,\tau}+\|u_\alpha\|^2_{1,\tau}\right)+\|w\|^2_{1,\tau}\right]\\
+\sum_{e\in \E^0_h}h^{-1}_e
\int_{e}\left[\sum_{\alpha=1,2}\left(\lbra \theta_\alpha\rbra^2+\lbra u_\alpha\rbra^2\right)+
\lbra w\rbra^2\right]\\
+
\sum_{e\in\E^{S}_h\cup\E^D_h}
h^{-1}_e\int_{e}\left(\sum_{\alpha=1,2}u^2_{\alpha}+w^2\right)
+\sum_{e\in\E^D_h}
h^{-1}_e\int_{e}\sum_{\alpha=1,2}\theta_\alpha^2,
\end{multline}
\begin{multline}\label{ah-norm}
\|(\!theta,\!u, w)\|^2_{a_h}:=
\sum_{\tau\in\T_h}\left[\sum_{\alpha,\beta=1,2}\left(\|\rho_{\alpha\beta}(\!theta, \!u, w)\|^2_{0,\tau}+\|\gamma_{\alpha\beta}(\!u, w)\|^2_{0,\tau}\right)
+\sum_{\alpha=1,2}\|\tau_\alpha(\!theta, \!u, w)\|^2_{0,\tau}
\right]\\
+\sum_{e\in \E^0_h}h^{-1}_e
\int_{e}\left[\sum_{\alpha=1,2}\left(\lbra \theta_\alpha\rbra^2+\lbra u_\alpha\rbra^2\right)+
\lbra w\rbra^2\right]\\
+
\sum_{e\in\E^{S}_h\cup\E^D_h}
h^{-1}_e\int_{e}\left(\sum_{\alpha=1,2}u^2_{\alpha}+w^2\right)
+\sum_{e\in\E^D_h}
h^{-1}_e\int_{e}\sum_{\alpha=1,2}\theta_\alpha^2.
\end{multline}

\section{Error analysis for  the finite element method}
\label{ErrorAnalysis}
The finite element model defined by \eqref{form_a} to \eqref{N-fem} fits in the form of the mixed equation
\eqref{isomorphism-abs}. We verify the conditions \eqref{isomorphism-condition} for the bilinear forms
defined by \eqref{form_a}, \eqref{form_b}, and \eqref{form_c}, with the space defined by \eqref{FE-space},
in which the $\H_h$ norm is defined by \eqref{Hh-norm}. We define the $\V_h$ norm by
\begin{equation}\label{Vh-norm}
\|(\N, \!eta)\|_{\V_h}:=\left(\sum_{\alpha,\beta=1,2}\|\N^{\alpha\beta}\|^2_{0,\Omega}+ \sum_{\alpha=1,2}\|\eta^\alpha\|^2_{0,\Omega}\right)^{1/2}.
\end{equation}
We show that there is a constant $C$ that depends on the shell geometry and shape regularity $\K$ of the triangulation $\T_h$, but otherwise, independent of the
triangulation such that
\begin{align} 
|a(\!theta, \!u, w; \!phi, \!v, z)| &\le C\|(\!theta, \!u, w)\|_{\H_h}\|(\!phi, \!v, z)\|_{\H_h}&\  &\forall\ (\!theta, \!u, w), (\!phi, \!v, z)\in \H_h,   \label{a-condition1}      \\
\|(\!phi, \!v, z)\|_{\H_h}^2& \le Ca(\!phi, \!v, z; \!phi, \!v, z)&\  &\forall\  (\!phi, \!v, z)\in \H_h,\label{a-condition2} \\
|b(\N, \!eta; \!phi, \!v, z)| &\le C\|(\!phi, \!v, z)\|_{\H_h}\|(\N, \!eta)\|_{\V_h}&\  &\forall\ (\!phi, \!v, z)\in \H_h, (\N, \!eta) \in \V_h,\label{b-condition}\\
|c(\M, \!xi;  \N, \!eta)|&\le C\|(\M, \!xi)\|_{V_h}\|(\N, \!eta)\|_{\V_h}&\  &\forall\ (\M, \!xi), (\N, \!eta) \in \V_h,\label{c-condition1}\\
\|(\N, \!eta)\|_{\V_h}^2&\le Cc(\N, \!eta;  \N, \!eta)&\  &\forall\ (\N, \!eta) \in \V_h. \label{c-condition2}
\end{align} 
We start with \eqref{a-condition1}.
From the definition \eqref{form_ub_a} and \eqref{form_a} of the bilinear form $a$, using the property of the elastic tensor \eqref{elastic-tensor-equiv},
we see the first line
in \eqref{form_ub_a} is bounded as
\begin{multline*}
\int_{\t\Omega_h}\left|\left[a^{\alpha\beta\lambda\gamma}
\rho_{\lambda\gamma}(\!theta, \!u, w)\rho_{\alpha\beta}(\!phi, \!v, z)
+a^{\alpha\beta\lambda\gamma}
\gamma_{\lambda\gamma}(\!u, w)\gamma_{\alpha\beta}(\!v, z)\right.\right.\\
\left.\left. +\kappa\mu a^{\alpha\beta}\tau_\alpha(\!theta, \!u, w)\tau_{\beta}(\!phi, \!v, z)\right]\right|
\le C\|(\!theta, \!u, w)\|_{a_h}\|(\!phi, \!v, z)\|_{a_h}\\
\le C \|(\!theta, \!u, w)\|_{\H_h}\|(\!phi, \!v, z)\|_{\H_h}.
\end{multline*}
We then estimate the second line in \eqref{form_ub_a}. We take on the first term
and let $e\in\E^0_h$ be one of the interior edges shared by elements $\tau_1$ and $\tau_2$. Using the elastic tensor property \eqref{elastic-tensor-equiv},
the H\"older inequality, we have
\begin{multline*}
\left|\int_{\t e}
a^{\alpha\beta\lambda\gamma}\lbrac\rho_{\lambda\gamma}(\!phi, \!v, z)\rbrac\lbra\theta_\alpha\rbra_{n_\beta}\right|\le C
\left[\sum_{\lambda, \gamma=1,2}h_e\int_e\lbrac\rho_{\lambda\gamma}(\!phi, \!v, z)\rbrac^2
\right]^{1/2}
\left[h^{-1}_e\sum_{\alpha=1,2}\int_e\lbra \theta_\alpha\rbra^2
\right]^{1/2}.
\end{multline*}
Using the trace inequality \eqref{trace},  the formula \eqref{N-bending} and \eqref{covariant-derivative}, and
the inverse inequality for finite element functions, we get
\begin{multline*}
h_e\int_e\lbrac\rho_{\lambda\gamma}(\!phi, \!v, z)\rbrac^2
\le C
\sum_{\beta, \delta=1,2}\left(\int_{\tau_\delta}|\rho_{\lambda\gamma}(\!phi, \!v, z)|^2+
h^2_{\tau_\delta}\int_{\tau_\delta}|\partial_\beta\rho_{\lambda\gamma}(\!phi, \!v, z)|^2\right)\\
\le C
\sum_{\delta=1,2}\left[
\|\!phi\|^2_{1,\tau_\delta}+\|\!v\|^2_{1,\tau_\delta}+\|z\|^2_{0,\tau_\delta}+
h^2_{\tau_\delta}
\left(\|\!phi\|^2_{2,\tau_\delta}+\|\!v\|^2_{2,\tau_\delta}+\|z\|^2_{1,\tau_\delta}\right)
\right]
\\
\le C
\sum_{\delta=1,2}\left(
\|\!phi\|^2_{1,\tau_\delta}+\|\!v\|^2_{1,\tau_\delta}+\|z\|^2_{0,\tau_\delta}.
\right)
\end{multline*}
Summing the above estimates for all $e\in\E^0_h$, and using Cauchy--Schwarz inequality, we get
\begin{multline}\label{a-5-est}
\left|\int_{\t\E^0_h}a^{\alpha\beta\lambda\gamma}\lbrac\rho_{\lambda\gamma}(\!phi, \!v, z)\rbrac\lbra\theta_\alpha\rbra_{n_\beta}\right|\\
\le C
\left[\sum_{\tau\in\T_h}\left(\|\!phi\|^2_{1,\tau}+\|\!v\|^2_{1,\tau}+\|z\|^2_{0,\tau}\right)\right]^{1/2}
\left(\sum_{e\in\E^0_h}h^{-1}_e\sum_{\alpha=1,2}\int_e\lbra \theta_\alpha\rbra^2\right)^{1/2}.
\end{multline}
All the other terms in the expression  \eqref{form_ub_a} of the bilinear form of $\ub a$  can be estimated similarly. 
The estimates on the additional penalty terms in the expression \eqref{form_a} 
of the bilinear form $a$ can be obtained by using the Cauchy--Schwarz again.
This completes the proof of \eqref{a-condition1}.

Next, we consider \eqref{a-condition2}.
Let $B(\!theta, \!u, w; \!phi, \!v, z)$ be the  bilinear form
defined by the sum of all the lines but the first one in the definition \eqref{form_ub_a} of $\ub a(\!theta, \!u, w; \!phi, \!v, z)$.
In view of the equivalence \eqref{Hh-ah-equiv}, there are constants $C_1$ and $C_3$ that depend on the shell mid-surface
and $\K$, and $C_2$ that depend on the penalty constant $\C$ in \eqref{form_a} such that
\begin{multline*}
a(\!phi, \!v, z; \!phi, \!v, z)\ge
C_1\|(\!phi, \!v, z)\|^2_{\H_h}\\
+C_2
\left[\sum_{e\in \E^0_h}\left(
\sum_{\alpha=1,2}h^{-1}_e\int_{e}\lbra v_\alpha\rbra^2+\sum_{\alpha=1, 2}
h^{-1}_e\int_{e}\lbra \phi_\alpha\rbra^2
+
h^{-1}_e\int_{e}\lbra z\rbra^2\right)\right.\\
\left.
+
\sum_{e\in\E^{S}_h\cup\E^D_h}
\left(\sum_{\alpha=1,2}h^{-1}_e\int_{e}v^2_{\alpha}+h^{-1}_e\int_{e}z^2\right)
+\sum_{e\in\E^D_h}
\sum_{\alpha=1,2}h^{-1}_e\int_{e}\phi_\alpha^2\right]
-
C_3|B(\!phi, \!v, z; \!phi, \!v, z)|
\end{multline*}
Using the same argument as what used in deriving \eqref{a-5-est}, we have an upper bound that
\begin{multline*}
|B(\!phi, \!v, z; \!phi, \!v, z)|\\
\le C
\|(\!phi, \!v, z)\|_{\H_h}
\left[\sum_{e\in \E^0_h}\left(\sum_{\alpha=1,2}
h^{-1}_e\int_{e}\lbra v_\alpha\rbra^2+\sum_{\alpha=1, 2}
h^{-1}_e\int_{e}\lbra \phi_\alpha\rbra^2
+
h^{-1}_e\int_{e}\lbra z\rbra^2\right)\right.\\
\left.
+
\sum_{e\in\E^{S}_h\cup\E^D_h}
\left(\sum_{\alpha=1,2}h^{-1}_e\int_{e}v^2_{\alpha}+h^{-1}_e\int_{e}z^2\right)
+\sum_{e\in\E^D_h}
\sum_{\alpha=1,2}h^{-1}_e\int_{e}\phi_\alpha^2
\right]^{1/2}.
\end{multline*}
It follows from Cauchy--Schwarz inequality that when the penalty constant $\C$ in \eqref{form_a} is sufficiently big (which makes
$C_2$ sufficiently big)
there is a $C$ such that \eqref{a-condition2} holds.

To see the the continuity \eqref{b-condition} of the bilinear form $b$, we only need to look at the second term 
in the right hand side of \eqref{form_b}. For an $e\in\E^0_h$ shared by $\tau_1$ and $\tau_2$, we have
\begin{multline*}
\left|\int_e\lbrac\N^{\alpha\beta}\rbrac\lbra v_{\alpha}\rbra_{n_{\beta}}\right|
\le C
\left[\sum_{\alpha,\beta=1,2}h_e\int_e(\N^{\alpha\beta})^2
\right]^{1/2}
\left[\sum_{\alpha=1,2}h^{-1}_e\int_e\lbra v_{\alpha}\rbra^2
\right]^{1/2}
\\
\hfill \le C
\left[\sum_{\alpha,\beta,\delta=1,2}(|\N^{\alpha\beta}|^2_{0,\tau_\delta}+h^2_{\tau_\delta}|\N^{\alpha\beta}|^2_{1,\tau_\delta})
\right]^{1/2}
\left[\sum_{\alpha=1,2}h^{-1}_e\int_e\lbra v_{\alpha}\rbra^2
\right]^{1/2}.
\end{multline*}
Similarly, 
\begin{multline*}
\left|\int_e\lbrac\eta^\alpha\rbrac\lbra z\rbra_{n_\alpha}\right|
\le C
\left[\sum_{\alpha,\delta=1,2}(|\eta^{\alpha}|^2_{0,\tau_\delta}+h^2_{\tau_\delta}|\eta^{\alpha}|^2_{1,\tau_\delta})
\right]^{1/2}
\left[h^{-1}_e\int_e\lbra z\rbra^2
\right]^{1/2}.\hfill
\end{multline*}
Summing up these estimates for all $e\in\E^0_h$, and using inverse inequality to the finite element functions $\N$ and $\!eta$, we
obtain
\begin{equation*}
\left|\int_{\t\E^0_h}\left(\lbrac\N^{\alpha\beta}\rbrac\lbra v_{\alpha}\rbra_{n_{\beta}}+\lbrac\eta^\alpha\rbrac\lbra z\rbra_{n_\alpha}\right)\right|
\le C
\|(\N, \!eta)\|_{\V_h}\|(\!phi, \!v, z)\|_{\H_h}.
\end{equation*}

The conditions \eqref{c-condition1} and \eqref{c-condition2} are trivial consequences of \eqref{compliance-tensor-equiv}.

Thus the finite element model \eqref{N-fem} has a unique solution
in the finite element space \eqref{FE-space}.
Corresponding to the weak norm \eqref{isomorphism-weak}, we define a weaker (semi) $\overline\V_h$ norm for finite element function in $\V_h$ by
\begin{equation}\label{isomorphism-weak-N}
|(\N, \!eta)|_{\overline\V_h}:=\sup_{(\!phi, \!v, z)\in \H_h}\frac{b(\N, \!eta; \!phi, \!v, z)}{\|(\!phi, \!v, z)\|_{\H_h}}\ \ \forall\ (\N, \!eta) \in \V_h.
\end{equation}
We are now in a situation in  which Theorem~\ref{isomorphism-thm} is applicable. From that theorem,
we have the inequality that there exists a $C$ that could be dependent on the shell mid-surface and the shape regularity $\K$
of the triangulation $\T_h$, but otherwise independent
of the finite element mesh and the shell thickness $\eps$ such that
\begin{multline*}
\|(\!theta, \!u, w)\|_{\H_h}+|(\M, \!xi)|_{\overline\V_h}+\eps\|(\M, \!xi)\|_{\V_h}\\
\le C
\sup_{(\!phi, \!v, z)\in\H_h, (\N, \!eta)\in\V_h}
\frac{
\left[
\begin{gathered}
a(\!theta, \!u, w; \!phi, \!v, z)+b(\M, \!xi; \!phi, \!v, z)\\
-b(\N, \!eta; \!theta, \!u, w)+\eps^2
c(\M, \!xi; \N, \!eta)
\end{gathered}
\right]
} {\|(\!phi, \!v, z)\|_{\H_h}+|(\N, \!eta)|_{\overline\V_h}+\eps\|(\N, \!eta)\|_{\V_h}}\\
 \ \forall\
(\!theta, \!u, w)\in\H_h, (\M, \!xi)\in\V_h.
\end{multline*}
Let $\!theta\e, \!u\e, w\e,\M\e, \!xi\e$ be the solution to the mixed formulation of the Naghdi  model \eqref{N-P-model}, let
$\!theta^h, \!u^h, w^h, \M^h, \!xi^h$ be the finite element solution to the finite element model \eqref{N-fem},
and let $\!theta^I, \!u^I, w^I, \M^I, \!xi^I$ be an interpolation to the Naghdi  model solution from the finite element space.
Since the finite element method \eqref{N-fem} and the Naghdi model \eqref{N-P-model}
are consistent, we have
\begin{multline}\label{error-fraction}
C^{-1}\|(\!theta^h-\!theta^I, \!u^h-\!u^I, w^h-w^I)\|_{\H_h}\\+
C^{-1}
\left[|(\M^h-\M^I, \!xi^h-\!xi^I)|_{\overline\V_h}+\eps\|(\M^h-\M^I, \!xi^h-\!xi^I)\|_{\V_h}\right]
\le\\
\sup_{
(\!phi, \!v, z)\in\H_h, (\N,\!eta)\in\V_h}
\frac{\left[\begin{gathered}
a(\!theta^h-\!theta^I, \!u^h-\!u^I, w^h-w^I; \!phi, \!v, z)+b(\M^h-\M^I, \!xi^h-\!xi^I; \!phi, \!v, z)\\
-b(\N, \!eta; \!theta^h-\!theta^I, \!u^h-\!u^I, w^h-w^I)+\eps^2
c(\M^h-\M^I, \!xi^h-\!xi^I, \N, \!eta)\end{gathered}
\right]} {\|(\!phi, \!v, z)\|_{\H_h}+|(\N, \!eta)|_{\overline\V_h}+\eps\|(\N, \!eta)\|_{\V_h}}\\
=
\sup_{
(\!phi, \!v, z)\in\H_h, (\N,\!eta)\in\V_h}
\frac{\left[\begin{gathered}
a(\!theta\e-\!theta^I, \!u\e-\!u^I, w\e-w^I; \!phi, \!v, z)+b(\M\e-\M^I, \!xi\e-\!xi^I; \!phi, \!v, z)\\
-b(\N, \!eta; \!theta\e-\!theta^I, \!u\e-\!u^I, w\e-w^I)+\eps^2
c(\M\e-\M^I, \!xi\e-\!xi^I, \N, \!eta)\end{gathered}
\right]} {\|(\!phi, \!v, z)\|_{\H_h}+|(\N, \!eta)|_{\overline\V_h}+\eps\|(\N, \!eta)\|_{\V_h}}.
\end{multline}
We estimate the four terms in the numerator of above last line one by one.
\begin{lem}\label{a-error-lem}
There is a constant $C$ independent of $\T_h$ such that
\begin{multline}\label{a-error}
\left|a(\!theta\e-\!theta^I, \!u\e-\!u^I, w\e-w^I;\!phi, \!v, z)\right|
\\
\le C\left\{
\sum_{\tau\in\T_h}\left[
\sum_{k=0}^2h^{2k-2}_\tau\left( \sum_{\alpha=1,2}|\theta\e_\alpha-\theta^I_\alpha|^2_{k,\tau}+\sum_{\alpha=1,2}|u\e_\alpha-u^I_\alpha|^2_{k,\tau}
+|w\e-w^I|_{k,\tau}
\right)\right]
\right\}^{1/2}\\
\|(\!phi, \!v, z)\|_{\H_h}\ \forall\ (\!phi, \!v, z)\in \H_h.
\end{multline}
\end{lem}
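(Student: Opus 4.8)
The plan is to decompose the bilinear form $a$ according to its definition \eqref{form_ub_a}--\eqref{form_a} into the element-wise energy integral (the first line of \eqref{form_ub_a}), the interface integrals over $\t\E^0_h$, the boundary integrals over $\t\E^{D\cup S}_h$ and $\t\E^D_h$, and the interior and boundary penalty sums of \eqref{form_a}, and to estimate each group separately. (Note that $a$ has no terms supported on $\t\E^F_h$, so the free-boundary enrichment of the displacement space plays no role in this estimate.) The organizing principle is that an inverse inequality may be applied to the second argument $(\!phi,\!v,z)$, which is a finite element function, but never to the interpolation error $(\!theta\e-\!theta^I,\!u\e-\!u^I,w\e-w^I)$, which lies only in $H^3$ element by element; so the second argument will always be absorbed into $\|(\!phi,\!v,z)\|_{\H_h}$, while the interpolation error is left in the piecewise Sobolev norms on the right of \eqref{a-error}.

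For the energy integral I would use the continuity of the elastic tensor \eqref{elastic-tensor-equiv} and the Cauchy--Schwarz inequality to bound it by $\|(\!theta\e-\!theta^I,\!u\e-\!u^I,w\e-w^I)\|_{a_h}$ times the analogous quantity for $(\!phi,\!v,z)$; the latter is $\le C\|(\!phi,\!v,z)\|_{\H_h}$ by \eqref{Hh-ah-equiv}, and since the strain operators \eqref{N-bending}--\eqref{N-shear} involve only first derivatives of $\!theta$, $\!u$ and the zeroth and first derivatives of $w$, the $L^2$ norms of the strains of the interpolation error are controlled by the $k=0,1$ terms of \eqref{a-error} (the weight $h_\tau^{-2}\ge c>0$ on the $k=0$ term being harmless on a fixed domain). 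For the penalty sums and the boundary integrals over $\t\E^{D\cup S}_h$ and $\t\E^D_h$ I would pair factors by Cauchy--Schwarz, send the finite element jumps and traces into $\|(\!phi,\!v,z)\|_{\H_h}$, and apply the trace inequality \eqref{trace} on each element separately to $\theta\e_\alpha-\theta^I_\alpha$, $u\e_\alpha-u^I_\alpha$, $w\e-w^I$, which (using $h_e\simeq h_\tau$) converts $h_e^{-1}\int_e(\cdot)^2$ into the $k=0,1$ terms; no boundary condition of the shell model is needed here.

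The interface integrals over $\t\E^0_h$ split into two kinds. When a jump of the interpolation error multiplies an average of a strain of $(\!phi,\!v,z)$, the computation is the one leading to \eqref{a-5-est}: \eqref{trace} followed by the inverse inequality on the finite element strain bounds the strain factor by $\sum_\delta\|(\!phi,\!v,z)\|^2_{1,\tau_\delta}\le C\|(\!phi,\!v,z)\|^2_{\H_h}$, and the jump factor gives $k=0,1$ terms as before. When an average of a strain of the interpolation error multiplies a jump or trace of $(\!phi,\!v,z)$, H\"older's inequality isolates the factor $h_e\int_{\t e}\lbrac\rho_{\lambda\gamma}(\!theta\e-\!theta^I,\ldots)\rbrac^2$ and its $\gamma_{\alpha\beta}$, $\tau_\alpha$ analogues, to which I would apply \eqref{trace} element by element; since no inverse inequality is available, this produces $\sum_\delta(\|\rho_{\lambda\gamma}(\text{err})\|^2_{0,\tau_\delta}+h^2_{\tau_\delta}|\rho_{\lambda\gamma}(\text{err})|^2_{1,\tau_\delta})$, and the second summand, carrying one extra derivative of the strain, is exactly what contributes the $k=2$ terms of \eqref{a-error} with weight $h^2_\tau$. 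A final Cauchy--Schwarz over elements and edges then assembles \eqref{a-error}. The main obstacle is the bookkeeping of this last mechanism: identifying which interface terms force genuine $H^2$ control of the interpolation error --- precisely those in which a strain of the \emph{first} argument appears inside an edge average, where \eqref{trace} must be used without a compensating inverse estimate --- and verifying that every such term stays within the $k\le 2$ budget of \eqref{a-error}; the rest is routine use of \eqref{trace}, \eqref{elastic-tensor-equiv}, \eqref{Hh-ah-equiv}, and the inverse inequality for finite element functions.
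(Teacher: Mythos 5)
Your proposal is correct and follows essentially the same route as the paper's proof: bound the bulk energy term by Cauchy--Schwarz, treat edge terms with the trace inequality \eqref{trace}, using the inverse inequality only on the finite element argument $(\!phi,\!v,z)$ (as in \eqref{a-5-est}), and accept the $k=2$ contributions $h_\tau^2|\cdot|_{2,\tau}^2$ exactly where an edge average of a strain of the interpolation error appears, with the penalty and $\E^{D\cup S}_h$, $\E^D_h$ terms handled the same way before a final Cauchy--Schwarz over elements and edges. No gap to report.
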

\begin{proof}
There are many terms in the expression of  
$a(\!theta\e-\!theta^I, \!u\e-\!u^I, w\e-w^I;\!phi, \!v, z)$, see \eqref{form_a} and \eqref{form_ub_a}. 
We only present  estimations for a few typical terms. The others can be bounded in similar ways. It is easy to see
\begin{multline*}
\left|\int_{\t\Omega_h}
a^{\alpha\beta\lambda\gamma}\rho_{\lambda\gamma}(\!theta\e-\!theta^I, \!u\e-\!u^I, w\e-w^I)
\rho_{\alpha\beta}(\!phi, \!v, z)\right|\\
\le C
\left\{\sum_{\tau\in\T_h}\left[
\left( \sum_{\alpha=1,2}\|\theta\e_\alpha-\theta^I_\alpha\|^2_{1,\tau}+\sum_{\alpha=1,2}\|u\e_\alpha-u^I_\alpha\|^2_{1,\tau}
+|w\e-w^I|_{0,\tau}
\right)\right]
\right\}^{1/2}\|(\!phi, \!v, z)\|_{\H_h}.
\end{multline*}
For an edge $e\in\E^0_h$ shared by $\tau_1$ and $\tau_2$, we have 
\begin{multline*}
\left|\int_e\lbrac\rho_{\lambda\gamma}(\!phi, \!v, z)\rbrac\lbra\theta\e_\alpha-\theta^I_\alpha \rbra_{n_\beta}\right|
\le C
\left(
h_e\int_e\lbrac\rho_{\lambda\gamma}(\!phi, \!v, z)\rbrac^2
\right)^{1/2}
\left(
h^{-1}_e\int_e\lbra\theta\e_\alpha-\theta^I_\alpha \rbra^2
\right)^{1/2}\\
\le C
\left[
\sum_{\delta=1,2}\left(
|\rho_{\lambda\gamma}(\!phi, \!v, z)|^2_{0,\tau_\delta}+h^2_{\tau_\delta}|\rho_{\lambda\gamma}(\!phi, \!v, z)|^2_{1,\tau_\delta}\right)
\right]^{1/2}
\left(
h^{-1}_e\int_e\lbra\theta\e_\alpha-\theta^I_\alpha \rbra^2
\right)^{1/2}
\\
\le C
\left[
\sum_{\delta=1,2}\left(
\|\!phi\|^2_{1,\tau_\delta}+\|\!v\|^2_{1,\tau_\delta}+\|z\|^2_{0,\tau_\delta}
\right)
\right]^{1/2}
\left[
\sum_{\delta=1,2}\left(h^{-2}_{\tau_\delta}|\theta\e_\alpha-\theta^I_\alpha|^2_{0,\tau_\delta}
+|\theta\e_\alpha-\theta^I_\alpha|^2_{1,\tau_\delta}\right)
\right]^{1/2},
\end{multline*}
\begin{multline*}
\left|\int_{e}\lbrac\rho_{\lambda\gamma}(\!theta\e-\!theta^I, \!u\e-\!u^I, w\e-w^I)\rbrac\lbra\phi_\alpha\rbra_{n_\beta}\right|
\\
\le C
\left[
\sum_{\delta=1,2}\left(
\|\!theta\e-\!theta^I\|^2_{1,\tau_\delta}+\|\!u\e-\!u^I\|^2_{1,\tau_\delta}+\|w\e-w^I\|^2_{0,\tau_\delta}\right)\right.
\\
+\left.
\sum_{\delta=1,2}h^2_{\tau_\delta}\left(|\!theta\e-\!theta^I|^2_{2,\tau_\delta}+|\!u\e-\!u^I|^2_{2,\tau_\delta}+|w\e-w^I|^2_{1,\tau_\delta}
\right)
\right]^{1/2}
\left(
h^{-1}_e\int_e\lbra\phi_\alpha \rbra^2
\right)^{1/2}.
\end{multline*}

For an edge $e\in \E^{D\cup S}_h$ that is an edge of element $\tau$, we have 
\begin{multline*}
\left|\int_{e}\tau_\beta(\!phi, \!v, z)(w\e-w^I)\right|\le C
\left[h_e\int_e|\tau_\beta(\!phi, \!v, z)|^2
\right]^{1/2}
\left[(h^{-1}_e\int_e(w\e-w^I)^2
\right]^{1/2}\\
\le C
\left[|\!phi|^2_{0,\tau}+|\!v|^2_{0,\tau}+\|z\|^2_{1,\tau}
\right]^{1/2}
\left[h^{-2}_\tau|w\e-w^I|^2_{0,\tau}+|w\e-w^I|^2_{1,\tau}
\right]^{1/2},
\end{multline*}
\begin{multline*}
\left|\int_e\tau_\beta(\!theta\e-\!theta^I, \!u\e-\!u^I, w\e-w^I)z\right|
\le C
\left[|\!theta\e-\!theta^I|^2_{0,\tau}+|\!u\e-\!u^I|^2_{0,\tau}+\|w\e-w^I\|^2_{1,\tau}\right.\\
\left.+h^2_\tau\left(|\!theta\e-\!theta^I|^2_{1,\tau}+|\!u\e-\!u^I|^2_{1,\tau}+|w\e-w^I|^2_{2,\tau}\right)
\right]^{1/2}
\left(h^{-1}_e\int_ez^2
\right)^{1/2}.
\end{multline*}

As  a typical penalty term in \eqref{form_a}, we consider an $e\in\E^0_h$ shared by $\tau_1$ and $\tau_2$, and we have
\begin{multline*}
\left|h^{-1}_e\int_{e}\lbra u\e_{\alpha}-u^I_\alpha\rbra \lbra v_{\alpha}\rbra\right|
\le C
\left[h^{-1}_e\int_e\lbra u\e_{\alpha}-u^I_\alpha\rbra^2
\right]^{1/2}
\left[h^{-1}_e\int_e\lbra v_\alpha\rbra^2
\right]^{1/2}\\
\le C
\left[\sum_{\delta=1,2}\left(h^{-2}_{\tau_\delta}|u\e_{\alpha}-u^I_\alpha|^2_{0,\tau_\delta}+|u\e_{\alpha}-u^I_\alpha|^2_{1,\tau_\delta}\right)
\right]^{1/2}
\left[h^{-1}_e\int_e\lbra v_\alpha\rbra^2
\right]^{1/2}.
\end{multline*}
Any single term in the expression of $a(\!theta\e-\!theta^I, \!u\e-\!u^I, w\e-w^I;\!phi, \!v, z)$ can be estimated in 
a way used above. The desired result then follows from summing up estimates for all the term and using Cauchy--Schwarz inequality.
\end{proof}

\begin{lem}\label{b2-error-lem}
There is a $C$ independent of $\T_h$ such that
\begin{multline}\label{b2-error}
\left|b(\M\e-\M^I, \!xi\e-\!xi^I; \!phi, \!v,z)\right|\\
\le C
\left\{\sum_{\tau\in\T_h}
\left[\sum_{\alpha,\beta=1,2}\left(|\M^{\eps\alpha\beta}-\M^{I \alpha\beta}|^2_{0, \tau}+h^2_{\tau}|\M^{\eps\alpha\beta}-\M^{I\alpha\beta}|^2_{1, \tau}\right)\right.\right.\\
\left.\left.
+
\sum_{\alpha=1,2}\left(|\xi^{\eps\alpha}-\xi^{I \alpha}|^2_{0, \tau}+h^2_{\tau}|\xi^{\eps\alpha}-\xi^{I\alpha}|^2_{1, \tau}
\right)\right]
\right\}^{1/2}
\|(\!phi, \!v, z)\|_{\H_h}\ \forall\ (\!phi, \!v, z)\in\H_h.
\end{multline}
\end{lem}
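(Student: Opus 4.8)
The plan is to bound the three groups of terms in the expression \eqref{form_b} of the bilinear form $b$ separately, namely the volume integral over $\t\Omega_h$, the jump integrals over $\t\E^0_h$, and the boundary integrals over $\t\E^{D\cup S}_h$, and in each case to extract a factor $\|(\!phi, \!v, z)\|_{\H_h}$. I would write $\delta\M=\M\e-\M^I$, $\delta\!xi=\!xi\e-\!xi^I$ for brevity. For the volume term, applying \eqref{compliance-tensor-equiv} (or simply boundedness of the metric coefficients, which are equivalent to constants up to the $\sqrt a$ factor discussed after \eqref{Green}) and the Cauchy--Schwarz inequality gives
\begin{equation*}
\left|\int_{\t\Omega_h}\left(\delta\M^{\alpha\beta}\gamma_{\alpha\beta}(\!v, z)+\delta\xi^\alpha\tau_\alpha(\!phi, \!v, z)\right)\right|
\le C\left[\sum_{\tau\in\T_h}\left(\sum_{\alpha,\beta}|\delta\M^{\alpha\beta}|^2_{0,\tau}+\sum_\alpha|\delta\xi^\alpha|^2_{0,\tau}\right)\right]^{1/2}\|(\!phi, \!v, z)\|_{a_h},
\end{equation*}
and then \eqref{Hh-ah-equiv} replaces $\|(\!phi,\!v,z)\|_{a_h}$ by $\|(\!phi,\!v,z)\|_{\H_h}$.

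For the jump terms over an interior edge $e\in\E^0_h$ shared by $\tau_1,\tau_2$, I would proceed exactly as in the continuity proof of $b$ given just before \eqref{isomorphism-weak-N}: by H\"older's inequality on $e$,
\begin{equation*}
\left|\int_{\t e}\delta\M^{\alpha\beta}\lbra v_\alpha\rbra_{n_\beta}\right|
\le C\left[\sum_{\alpha,\beta}h_e\int_e(\delta\M^{\alpha\beta})^2\right]^{1/2}\left[\sum_\alpha h^{-1}_e\int_e\lbra v_\alpha\rbra^2\right]^{1/2},
\end{equation*}
and then the trace inequality \eqref{trace} applied to $\delta\M^{\alpha\beta}$ (which is in $H^1$ on each $\tau_\delta$, since $\M\e\in H^2$ by the assumed regularity and $\M^I$ is a polynomial) converts $h_e\int_e(\delta\M^{\alpha\beta})^2$ into $\sum_{\delta}(|\delta\M^{\alpha\beta}|^2_{0,\tau_\delta}+h^2_{\tau_\delta}|\delta\M^{\alpha\beta}|^2_{1,\tau_\delta})$. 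The analogous estimate holds for the $\delta\xi^\alpha\lbra z\rbra_{n_\alpha}$ term. Note that here, unlike in the continuity proof, I do \emph{not} use the inverse inequality — that step is only legitimate for finite element functions, whereas $\delta\M,\delta\!xi$ are not polynomials; instead I simply keep the $h^2_\tau|\cdot|^2_{1,\tau}$ terms, which is precisely why they appear on the right-hand side of \eqref{b2-error}. Summing over $e\in\E^0_h$ and using Cauchy--Schwarz, the second factor is controlled by $\|(\!phi,\!v,z)\|_{\H_h}$ via the jump-penalty part of \eqref{Hh-norm}.

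For the boundary terms over $e\in\E^{D\cup S}_h$, which are an edge of a single element $\tau$, the same H\"older-plus-trace argument gives $|\int_{\t e}\delta\M^{\alpha\beta}n_\beta v_\alpha|\le C[|\delta\M^{\alpha\beta}|^2_{0,\tau}+h^2_\tau|\delta\M^{\alpha\beta}|^2_{1,\tau}]^{1/2}[h^{-1}_e\int_e v_\alpha^2]^{1/2}$, and the second factor is bounded by $\|(\!phi,\!v,z)\|_{\H_h}$ via the boundary-penalty part of \eqref{Hh-norm}; similarly for the $\delta\xi^\alpha n_\alpha z$ term. Collecting the three groups and applying Cauchy--Schwarz one last time yields \eqref{b2-error}. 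There is no serious obstacle here: the lemma is essentially the continuity estimate \eqref{b-condition} for $b$ with the interpolation error $(\delta\M,\delta\!xi)$ in place of a finite element function, the only point requiring care being that the inverse inequality must be dropped and replaced by retention of the $h^2_\tau|\cdot|^2_{1,\tau}$ semi-norm terms, which accounts for the precise form of the right-hand side.
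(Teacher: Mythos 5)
Your proposal is correct and follows essentially the same route as the paper: split $b$ into the volume, interior-jump, and boundary groups, apply H\"older plus the trace inequality \eqref{trace} (retaining the $h^2_\tau|\cdot|^2_{1,\tau}$ terms rather than invoking an inverse inequality, since $\M\e-\M^I$, $\!xi\e-\!xi^I$ are not polynomials), and absorb the test-function factors into $\|(\!phi,\!v,z)\|_{\H_h}$ before a final Cauchy--Schwarz. The only cosmetic difference is that you route the volume term through the $a_h$-norm and \eqref{Hh-ah-equiv}, whereas the paper bounds $\gamma_{\alpha\beta}(\!v,z)$ and $\tau_\alpha(\!phi,\!v,z)$ in $L^2$ directly; both are equivalent.
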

\begin{proof}
In view of the definition \eqref{form_b}, we have
\begin{multline*} 
b(\M\e-\M^I,\!xi\e-\!xi^I; \!phi, \!v,z)=
\int_{\t\Omega_h}\left[(\M^{\eps\alpha\beta}-\M^{I\alpha\beta})
\gamma_{\alpha\beta}(\!v, z)+(\xi^{\eps\alpha}-\xi^{I\alpha})\tau_\alpha(\!phi, \!v, z)\right]\\
-\int_{\t\E^0_h}\left(\lbrac\M^{\eps\alpha\beta}-\M^{I\alpha\beta}\rbrac\lbra v_{\alpha}\rbra_{n_{\beta}}+\lbrac\xi^{\eps\alpha}-\xi^{I\alpha}\rbrac\lbra z\rbra_{n_\alpha}\right)\\
 -\int_{\t\E^{D\cup S}_h}\left[
(\M^{\eps\alpha\beta}-\M^{I\alpha\beta}){n_{\beta}}v_{\alpha}+(\xi^{\eps\alpha}-\xi^{I\alpha}) n_\alpha z\right],
\end{multline*}
We have the estimates on the $\M$ related terms
\begin{equation*}
\left|\int_{\t\Omega_h}(\M^{\eps\alpha\beta}-\M^{I\alpha\beta})\gamma_{\alpha\beta}(\!v, z)\right|
\le C
\sum_{\alpha, \beta=1,2}|\M^{\eps\alpha\beta}-\M^{I\alpha\beta}|_{0, \Omega_h}\sum_{\alpha, \beta=1,2}|\gamma_{\alpha\beta}(\!v, z)|_{0, \Omega_h},
\end{equation*}
\begin{multline*}
\left|\int_{\t\E^0_h}\lbrac\M^{\eps\alpha\beta}-\M^{I\alpha\beta}\rbrac\lbra v_{\alpha}\rbra_{n_{\beta}}\right|\\
\le C
\sum_{e\in\E^0_h}\left[\sum_{\alpha, \beta=1,2}h_e|\M^{\eps\alpha\beta}-\M^{I \alpha\beta}|^2_{0, e}\right]^{1/2}
\left[h^{-1}_e|\lbra \!v\rbra|^2_{0,e}\right]^{1/2}\\
\le C
\left[\sum_{\tau\in\T_h}\sum_{\alpha,\beta=1,2}\left(|\M^{\eps\alpha\beta}-\M^{I \alpha\beta}|^2_{0, \tau}+h^2_{\tau}|\M^{\eps\alpha\beta}-\M^{I\alpha\beta}|^2_{1, \tau}\right)
\right]^{1/2}\left[\sum_{e\in\E^0_h}h^{-1}_e|\lbra \!v\rbra|^2_{0,e}\right]^{1/2},
\end{multline*}
and
\begin{multline*}
\left|\int_{\t\E^{S}_h\cup\t\E^D_h}
(\M^{\eps\alpha\beta}-\M^{I\alpha\beta}){n_{\beta}}v_{\alpha}\right|\\
\le C
\sum_{e\in\E^S_h\cup\E^D_h}\left[\sum_{\alpha, \beta=1,2}h_e|\M^{\eps\alpha\beta}-\M^{I \alpha\beta}|^2_{0, e}\right]^{1/2}
\left[h^{-1}_e|\!v|^2_{0,e}\right]^{1/2}\\
\le C
\left[\sum_{\tau\in\T_h}\sum_{\alpha,\beta=1,2}\left(|\M^{\eps\alpha\beta}-\M^{I \alpha\beta}|^2_{0, \tau}+h^2_{\tau}|\M^{\eps\alpha\beta}-\M^{I\alpha\beta}|^2_{1, \tau}\right)
\right]^{1/2}\left[\sum_{e\in\E^S_h\cup\E^D_h}h^{-1}_e|\!v|^2_{0,e}\right]^{1/2}.
\end{multline*}
The $\xi$ related terms can be bounded in the same way.
Summing up, we get the estimate \eqref{b2-error}.
\end{proof}

In either  the inequality \eqref{a-error} or  \eqref{b2-error}, we did not
impose any condition for the interpolations $\!theta^I$, $\!u^I$, $w^I$, $\M^I$, and $\!xi^I$,  except that they
are finite element functions from the space \eqref{FE-space}. The next estimate is very different in that
the interpolation needs to be  particularly chosen to obtain a desirable bound for
\begin{multline}\label{b-original}
b(\N, \!eta; \!theta\e-\!theta^I, \!u\e-\!u^I, w\e-w^I)\\
=
\int_{\t\Omega_h}\left[\N^{\alpha\beta}\gamma_{\alpha\beta}(\!u\e-\!u^I, w\e-w^I)+\eta^\alpha\tau_\alpha(\!theta\e-\!theta^I, \!u\e-\!u^I, w\e-w^I)\right]\\
-\int_{\t\E^0_h}\left(\lbrac\N^{\alpha\beta}\rbrac\lbra u\e_{\alpha}-u^I_\alpha\rbra_{n_{\beta}}+\lbrac\eta^\alpha\rbrac\lbra w\e-w^I\rbra_{n_\alpha}\right)
\\ -\int_{\t\E^{S}_h\cup\t\E^D_h}
\left[\N^{\alpha\beta}{n_{\beta}}(u\e_{\alpha}-u^I_\alpha)+\eta^\alpha n_\alpha (w\e-w^I)\right].
\end{multline}

For $\theta\e_\alpha$, on any $\tau\in\T_h$, 
we define $\theta^I_\alpha\in P^2(\tau)$ by the weighted $L^2(\tau)$ projection such that 
\begin{equation}\label{thetaI}
\int_{\t\tau}(\theta\e_\alpha-\theta^I_\alpha)p=0\ \forall\ p\in P^2(\tau).
\end{equation}

For $u\e_\alpha$ and $w\e$ , on a $\tau\in\T_h$, if $\partial\tau\cap\E^F_h=\emptyset$, we define $u^I_\alpha$ and $w^I$ in $P^2(\tau)$ by
the weighted $L^2(\tau)$ projection such that
\begin{equation}\label{uwI-interior}
\int_{\t\tau}(u\e_\alpha-u^I_\alpha)p=0, \quad 
\int_{\t\tau}(w\e-w^I)p=0\ \forall\ p\in P^2(\tau).
\end{equation}
If $\partial\tau\cap\E^F_h$ has one edge $e$, we define $u^I_\alpha$ and $w^I$ in $P^3_*(\tau)$ by
\begin{equation}\label{uwI-edge1}
\begin{gathered}
\int_{\t\tau}(u\e_\alpha-u^I_\alpha)p=0, \quad
\int_{\t\tau}(w\e-w^I)p=0\ \forall\ p\in P^2(\tau), \\
\int_e(u\e_\alpha-u^I_\alpha) p\sqrt a=0, \quad
\int_e(w\e-w^I) p\sqrt a=0
\ \forall\ p\in P^1(e).
\end{gathered}
\end{equation}
If $\partial\tau\cap\E^F_h$ has two edges $e_\beta$, we define $u^I_\alpha$ and $w^I$ in $P^3(\tau)$ by
\begin{equation}\label{uwI-edge2}
\begin{gathered}
\int_{\t\tau}(u\e_\alpha-u^I_\alpha)p=0, \quad \int_{\t\tau}(w\e-w^I)p=0
\ \forall\ p\in P^2(\tau), \\
\int_{e_\beta}(u\e_\alpha-u^I_\alpha) p\sqrt a=0,\quad 
\int_{e_\beta}(w\e-w^I) p\sqrt a=0
\ \forall\ p\in P^1(e_\beta).
\end{gathered}
\end{equation}
The unisolvences of \eqref{thetaI} and \eqref{uwI-interior} are trivial. The unisolvence of \eqref{uwI-edge1} is seen from the condition
\eqref{P3*}. To see the unisolvence of \eqref{uwI-edge2}, one uses Appell's polynomial \cite{Braess}
to decompose
a cubic polynomial as the sum of a quadratic and an orthogonal complement, and uses the fact that
the orthogonal complement is uniquely determined by its averages and first moments on two edges.

\begin{lem}\label{b3-error-lem}
With the  interpolations defined by \eqref{thetaI} to \eqref{uwI-edge2}, there is a constant $C$ independent of $\T_h$ such that
\begin{multline}\label{b3-error}
|b(\N, \!eta; \!theta\e-\!theta^I,\!u\e-\!u^I,w\e-w^I)|\\
\le C
\max_{\tau\in\T_h}h^3_\tau\left[\sum_{\alpha,\beta,\lambda=1,2}|\Gamma^{\lambda}_{\alpha\beta}|_{2,\infty,\tau}+
\sum_{\alpha,\beta=1,2}\left(|b_{\alpha\beta}|_{2,\infty,\tau}+|b^\beta_\alpha|_{2,\infty,\tau}\right)\right]\\
\|(\N, \!eta)\|_{\V_h}
\left[\sum_{\tau\in\T_h}h^{-2}_{\tau}
\left(\left|u\e_{\alpha}-u^I_{\alpha}\right|^2_{0,\tau}+
\left|w\e-w^I\right|^2_{0,\tau}\right)
\right]^{1/2}\ \ \forall\ (\N, \!eta)\in\V_h.
\end{multline}
\end{lem}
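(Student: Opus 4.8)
The plan is to rewrite $b(\N,\!eta;\,\theta\e_\alpha-\theta^I_\alpha,\,u\e_\alpha-u^I_\alpha,\,w\e-w^I)$ by integrating by parts element by element, using the surface Green's theorem \eqref{Green}, so that every derivative is moved onto the piecewise linear functions $\N^{\alpha\beta}$ and $\eta^\alpha$, and then to exploit the orthogonality built into the interpolations \eqref{thetaI}--\eqref{uwI-edge2}. Write $\delta\theta_\alpha=\theta\e_\alpha-\theta^I_\alpha$, $\delta u_\alpha=u\e_\alpha-u^I_\alpha$, $\delta w=w\e-w^I$. Using the symmetry of $\N$, the definitions \eqref{N-metric} and \eqref{N-shear}, and the product rule for covariant derivatives from \eqref{covariant-derivative}, one checks that on each $\tau\in\T_h$ the quantity $\int_{\t\tau}[\N^{\alpha\beta}\gamma_{\alpha\beta}(\delta u,\delta w)+\eta^\alpha\tau_\alpha(\delta\theta,\delta u,\delta w)]$ equals the edge integral $\int_{\partial\tau}(\N^{\alpha\beta}\bar n_\beta\,\delta u_\alpha+\eta^\alpha\bar n_\alpha\,\delta w)\sqrt a$ minus the volume integral $\int_{\t\tau}(\N^{\alpha\beta}|_\beta\,\delta u_\alpha+\N^{\alpha\beta}b_{\alpha\beta}\,\delta w+\eta^\alpha|_\alpha\,\delta w-\eta^\alpha b^\gamma_\alpha\,\delta u_\gamma-\eta^\alpha\delta\theta_\alpha)$. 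Summing over $\T_h$ and using continuity of $\N^{\alpha\beta}$ and $\eta^\alpha$, the edge integrals over interior edges assemble into exactly the interior-edge terms of \eqref{form_b} and those over $\t\E^{D\cup S}_h$ into its clamped/simply-supported boundary terms; by the definition of $b$ all of these cancel, leaving the sum of the element volume integrals together with a single leftover integral over the free edges.

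The leftover free-edge integral is the heart of the matter, and it is precisely what the enriched spaces $P^3_*(\tau)$, $P^3(\tau)$ and the extra edge conditions in \eqref{uwI-edge1}, \eqref{uwI-edge2} are meant to annihilate: along a straight edge $e\in\E^F_h$ the flat normal $\bar n_\beta$ is constant, so $\N^{\alpha\beta}\bar n_\beta$ and $\eta^\alpha\bar n_\alpha$ are affine along $e$, i.e.\ they lie in $P^1(e)$, while on the adjacent element $\delta u_\alpha$ and $\delta w$ have been made $\sqrt a$-weighted $L^2(e)$-orthogonal to $P^1(e)$; hence the free-edge contribution vanishes. For the same reason $\int_{\t\tau}\eta^\alpha\delta\theta_\alpha$ vanishes on every element, because $\eta^\alpha\in P^1(\tau)\subset P^2(\tau)$ and $\delta\theta_\alpha$ is $\sqrt a$-weighted $L^2(\tau)$-orthogonal to $P^2(\tau)$ by \eqref{thetaI}; this is why $\theta\e_\alpha-\theta^I_\alpha$ does not appear on the right-hand side of \eqref{b3-error}. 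One is thus reduced to bounding $\sum_{\tau}\int_{\t\tau}(\N^{\alpha\beta}|_\beta\delta u_\alpha+\N^{\alpha\beta}b_{\alpha\beta}\delta w+\eta^\alpha|_\alpha\delta w-\eta^\alpha b^\gamma_\alpha\delta u_\gamma)$.

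Each of these volume terms is handled by the same device. Consider $\int_{\t\tau}\N^{\alpha\beta}|_\beta\delta u_\alpha$. By \eqref{covariant-derivative}, $\N^{\alpha\beta}|_\beta=\partial_\beta\N^{\alpha\beta}+\Gamma^\alpha_{\beta\lambda}\N^{\lambda\beta}+\Gamma^\beta_{\beta\lambda}\N^{\alpha\lambda}$; the first summand is constant, lies in $P^2(\tau)$, and hence integrates to zero against $\delta u_\alpha$ (the interior $P^2$-orthogonality holds on every element, by \eqref{uwI-interior}, \eqref{uwI-edge1}, or \eqref{uwI-edge2}). For a summand such as $\Gamma^\alpha_{\beta\lambda}\N^{\lambda\beta}$, let $\widehat\Gamma\in P^1(\tau)$ be the first-degree Taylor polynomial of $\Gamma^\alpha_{\beta\lambda}$ at a point of the convex triangle $\tau$; then $\widehat\Gamma\,\N^{\lambda\beta}\in P^2(\tau)$ integrates to zero against $\delta u_\alpha$, so $\int_{\t\tau}\Gamma^\alpha_{\beta\lambda}\N^{\lambda\beta}\delta u_\alpha=\int_{\t\tau}(\Gamma^\alpha_{\beta\lambda}-\widehat\Gamma)\N^{\lambda\beta}\delta u_\alpha$, and with $\sqrt a\simeq1$, Hölder's inequality, and $\|\Gamma^\alpha_{\beta\lambda}-\widehat\Gamma\|_{0,\infty,\tau}\lesssim h_\tau^2|\Gamma^\alpha_{\beta\lambda}|_{2,\infty,\tau}$, this is $\lesssim h_\tau^2|\Gamma^\alpha_{\beta\lambda}|_{2,\infty,\tau}\,\|\N^{\lambda\beta}\|_{0,\tau}\,\|\delta u_\alpha\|_{0,\tau}$. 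The terms $\N^{\alpha\beta}b_{\alpha\beta}\delta w$, $\eta^\alpha|_\alpha\delta w$, and $\eta^\alpha b^\gamma_\alpha\delta u_\gamma$ are estimated identically, producing the seminorms $|b_{\alpha\beta}|_{2,\infty,\tau}$ and $|b^\beta_\alpha|_{2,\infty,\tau}$. Summing over $\tau\in\T_h$, writing $h_\tau^2=h_\tau^3\cdot h_\tau^{-1}$, pulling out the factor $\max_{\tau}h_\tau^3[\sum|\Gamma^\lambda_{\alpha\beta}|_{2,\infty,\tau}+\sum|b_{\alpha\beta}|_{2,\infty,\tau}+\sum|b^\beta_\alpha|_{2,\infty,\tau}]$, and applying the Cauchy--Schwarz inequality so as to recognize $\|(\N,\!eta)\|_{\V_h}$ from \eqref{Vh-norm} and $[\sum_\tau h_\tau^{-2}(|u\e_\alpha-u^I_\alpha|^2_{0,\tau}+|w\e-w^I|^2_{0,\tau})]^{1/2}$, yields \eqref{b3-error}.

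I expect the main obstacle to be the first step: carrying out the element-wise integration by parts carefully enough to verify that everything cancels against the definition \eqref{form_b} of $b$ except the free-edge integral, and then recognizing that this leftover is exactly neutralized by passing to the flat straight edges — where $\N^{\alpha\beta}\bar n_\beta$ and $\eta^\alpha\bar n_\alpha$ become affine — and invoking the two extra edge moments prescribed in \eqref{uwI-edge1}--\eqref{uwI-edge2}; this bookkeeping is the structural reason the enrichment on free-boundary elements is present. Once that structure is exposed, the remaining bounds are routine polynomial-approximation estimates, with the apparent loss from the per-element power $h_\tau^2$ to the global $\max_\tau h_\tau^3$ arising only from the $h_\tau^{-1}$ weights in the final Cauchy--Schwarz step.
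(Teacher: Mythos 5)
Your proposal is correct and follows essentially the same route as the paper's proof: element-wise integration by parts via \eqref{Green} so that the continuity of $\N^{\alpha\beta}$, $\eta^\alpha$ and the edge-moment conditions in \eqref{uwI-edge1}--\eqref{uwI-edge2} annihilate all edge contributions, the interior $P^2$-orthogonality kills the constant derivative parts and the $\eta^\alpha(\theta\e_\alpha-\theta^I_\alpha)$ term, and the remaining geometric-coefficient terms are handled by subtracting a linear approximation (the paper uses the best $L^\infty(\tau)$ linear approximant where you use a Taylor polynomial, an immaterial difference) followed by the $h_\tau^2=h_\tau^3\cdot h_\tau^{-1}$ split and Cauchy--Schwarz. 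No gaps; this matches the argument leading to \eqref{b-simple} and the subsequent estimates.
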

\begin{proof}
With an application of the Green's theorem \eqref{Green} on each element $\t\tau\in\t\T_h$, summing up, we obtain the following  alternative expression to \eqref{b-original}.
\begin{multline*}
b(\N, \!eta; \!theta\e-\!theta^I, \!u\e-\!u^I,w\e-w^I)
=
\int_{\t\Omega_h}\left[-\N^{\alpha\beta}|_{\beta}\left(u\e_{\alpha}-u^I_{\alpha}\right)-b_{\alpha\beta}\N^{\alpha\beta}
\left(w\e-w^I\right) \right.\\
\left. -\eta^\alpha|_\alpha(w\e-w^I)+\eta^\alpha(\theta\e_\alpha-\theta^I_\alpha)+\eta^\beta b^\alpha_\beta(u\e_\alpha-u^I_\alpha)
\right]\\
+\int_{\t\E_h^0}\lbra\N^{\alpha\beta}\rbra_{n_{\beta}}\lbrac u\e_{\alpha}-u^I_{\alpha}\rbrac+
\int_{\t\E_h^0}\lbra\eta^{\alpha}\rbra_{n_\alpha}\lbrac w\e-w^I\rbrac\\
+\int_{\t\E_h^F}\N^{\alpha\beta}n_{\beta}
\left(u\e_{\alpha}-u^I_{\alpha}\right)+\int_{\t\E_h^F}\eta^\alpha n_\alpha(w\e-w^I).
\end{multline*}
Since $\N^{\alpha\beta}$ and $\eta^\alpha$ are continuous piecewise linear polynomials, on each $e\in\E^0_h$ we have $\lbra\N^{\alpha\beta}\rbra_{n_{\beta}}=0$
and $\lbra\eta^{\alpha}\rbra_{n_\alpha}=0$.
For each $e\in\E^F_h$, we have, see \eqref{Green},
\begin{equation*}
\begin{gathered}
\int_{\t e}\N^{\alpha\beta}n_{\beta}
\left(u\e_{\alpha}-u^I_{\alpha}\right)=\int_{e}\N^{\alpha\beta}\bar n_{\beta}
\left(u\e_{\alpha}-u^I_{\alpha}\right)\sqrt a=0,\\
\int_{\t e}\eta^\alpha n_\alpha(w\e-w^I)=\int_e\eta^\alpha \bar n_\alpha(w\e-w^I)\sqrt a=0.
\end{gathered}
\end{equation*}
Using the formulas \eqref{covariant-derivative} for the covariant derivatives  $\N^{\alpha\beta}|_{\beta}$ and $\eta^\alpha|_\alpha$, and using properties 
of the interpolations \eqref{thetaI} to \eqref{uwI-edge2}, the expression is further simplified to
\begin{multline}\label{b-simple}
b(\N, \!eta; \!theta\e-\!theta^I, \!u\e-\!u^I,w\e-w^I)
\\=
\int_{\t\Omega_h}\left[\left(b^\alpha_\beta\eta^\beta-
\Gamma^{\beta}_{\beta\gamma}\N^{\alpha\gamma}-
\Gamma^{\alpha}_{\delta\beta}\N^{\delta\beta}\right)
\left(u\e_{\alpha}-u^I_{\alpha}\right)-\left(\Gamma^\delta_{\delta\alpha}\eta^\alpha+b_{\alpha\beta}\N^{\alpha\beta}\right)
\left(w\e-w^I\right) \right].
\end{multline}
The last term is estimated as follows. 
For $\tau\in\T_h$, we have
\begin{equation*} 
\int_{\t\tau}b_{\alpha\beta}\N^{\alpha\beta}
\left(w\e-w^I\right)=
\int_{\t\tau}\left[b_{\alpha\beta}-p^1(b_{\alpha\beta})\right]\N^{\alpha\beta}
\left(w\e-w^I\right)
\end{equation*}
Here, $p^1(b_{\alpha\beta})$ is the best linear approximation to $b_{\alpha\beta}$
in the space $L^{\infty}(\tau)$ such that
\begin{equation*}
\left|b_{\alpha\beta}-p^1(b_{\alpha\beta})\right|_{0,\infty,\tau}\le Ch^2_\tau \left|b_{\alpha\beta}\right|_{2,\infty,\tau}.
\end{equation*}
From this, we see
\begin{equation*} 
\left|\int_{\t\tau}b_{\alpha\beta}\N^{\alpha\beta}
\left(w\e-w^I\right)\right|
\le C
h^3_\tau\left|b_{\alpha\beta}\right|_{2,\infty,\tau}|\N^{\alpha\beta}|_{0, \tau}h^{-1}_\tau|w\e-w^I|_{0,\tau}.
\end{equation*}
Summing up such estimates for all $\tau\in\T_h$, and using Cauchy--Schwarz inequality, we get
\begin{multline*}
\left|\int_{\t\Omega_h}b_{\alpha\beta}\N^{\alpha\beta}
\left(w\e-w^I\right)\right|\\
\le C
\left[\max_{\tau\in\T_h}\left(h^3_\tau\sum_{\alpha,\beta=1,2}\left|b_{\alpha\beta}\right|_{2,\infty,\tau}\right)\right]\|\N\|_{0,\Omega_h}
\left(\sum_{\tau\in\T_h}h^{-2}_\tau|w\e-w^I|^2_{0,\tau}
\right)^{1/2}.
\end{multline*}
The other terms in \eqref{b-simple} can be estimated in the same way.
\end{proof}

It is trivial to see that
\begin{multline}\label{c-error}
\left|c(\M\e-\M^I, \!xi\e-\!xi^I; \N, \!eta)\right|\\
\le C \|(\N, \!eta)\|_{\V_h}
\left(\sum_{\alpha,\beta=1,2}|\M^{\eps\alpha\beta}-\M^{I\alpha\beta}|_{0,\Omega_h}
+\sum_{\alpha=1,2}|\xi^{\eps\alpha}-\xi^{I\alpha}|_{0,\Omega_h}
\right)\ \forall\ (\N, \!eta) \in\V_h.
\end{multline}

The following 
result gives an 
estimate for the difference between the finite element solution and an interpolation
of the Naghdi model solution.
It is a result of combining  \eqref{a-error}, \eqref{b2-error}, \eqref{b3-error}, \eqref{c-error}, and \eqref {error-fraction}.
\begin{thm}\label{N-fem-lem}
Let $(\!theta^h, \!u^h, w^h)$ and $(\M^h, \!xi^h)$ be the finite element solution determined by the finite element model 
\eqref{N-fem}. 
Let $\theta^I_\alpha$, $u^I_{\alpha}$, and $w^I$ be the interpolations to $\theta\e_\alpha$, $u\e_\alpha$, and $w\e$
in the finite element space \eqref{FE-space}, which is defined by the formulas \eqref{thetaI}, \eqref{uwI-interior},
\eqref{uwI-edge1}, and \eqref{uwI-edge2}, respectively. Let
$\M^{I\alpha\beta}$ and $\xi^{I\alpha}$ be approximations
to $\M^{\eps\alpha\beta}$ and $\xi^{\eps\alpha}$  to be selected from the space of continuous piecewise linear functions. There is a $C$
independent of the triangulation $\T_h$ and the shell thickness $\eps$ such that
\begin{multline}\label{N-fem-error-lem}
\|(\!theta^h-\!theta^I, \!u^h-\!u^I, w^h-w^I)\|_{\H_h}\\
+
|(\M^h-\M^I, \!xi^h-\!xi^I)|_{\overline\V_h}+\eps\|(\M^h-\M^I, \!xi^h-\!xi^I)\|_{\V_h}
\\
\le C
\left[1+\eps^{-1}
\max_{\tau\in\T_h}h^3_\tau\left(\sum_{\alpha,\beta,\lambda=1,2}|\Gamma^{\lambda}_{\alpha\beta}|_{2,\infty,\tau}+
\sum_{\alpha,\beta=1,2}|b_{\alpha\beta}|_{2,\infty,\tau}+\sum_{\alpha,\beta=1,2}|b^\beta_\alpha|_{2,\infty,\tau}\right)
\right]
\\
\left\{
\sum_{\tau\in\T_h}\left[
\sum_{k=0}^2h^{2k-2}_\tau\left(
\sum_{\alpha=1,2}
\left(|\theta\e_{\alpha}-\theta^I_{\alpha}|^2_{k, \tau}
+
|u\e_{\alpha}-u^I_{\alpha}|^2_{k, \tau}\right)
+
|w\e-w^I|^2_{k, \tau}\right)
\right.\right.\\
\left.\left.+\sum_{k=0}^1h^{2k}_\tau\left(\sum_{\alpha,\beta=1,2}|\M^{\eps\alpha\beta}-\M^{I\alpha\beta}|^2_{k, \tau}
+\sum_{\alpha=1,2}|\xi^{\eps\alpha}-\xi^{I\alpha}|^2_{k,\tau}\right)
\right]\right\}^{1/2}.
\end{multline}
\end{thm}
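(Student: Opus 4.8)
The plan is to assemble Theorem~\ref{N-fem-lem} by combining the abstract inf--sup estimate already packaged in \eqref{error-fraction} with the four term-by-term bounds \eqref{a-error}, \eqref{b2-error}, \eqref{b3-error}, and \eqref{c-error}. The starting point is the inequality \eqref{error-fraction}, whose left-hand side is exactly the quantity we wish to estimate and whose right-hand side is a supremum, over $(\!phi, \!v, z)\in\H_h$ and $(\N, \!eta)\in\V_h$, of a fraction whose numerator is the sum of the four bilinear-form expressions evaluated at the interpolation error $(\!theta\e-\!theta^I, \!u\e-\!u^I, w\e-w^I)$ and $(\M\e-\M^I, \!xi\e-\!xi^I)$, and whose denominator is $\|(\!phi, \!v, z)\|_{\H_h}+|(\N, \!eta)|_{\overline\V_h}+\eps\|(\N, \!eta)\|_{\V_h}$. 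So the task reduces to bounding each of the four terms in the numerator by (a data-dependent constant) times the denominator.

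First I would bound the $a$-term and the $b(\M\e-\M^I,\!xi\e-\!xi^I;\cdot)$-term. By Lemma~\ref{a-error-lem} the former is at most the $k=0,1,2$ interpolation-error sum (the first three rows inside the brace in \eqref{N-fem-error-lem}) times $\|(\!phi,\!v,z)\|_{\H_h}$, and by Lemma~\ref{b2-error-lem} the latter is at most the $k=0,1$ stress interpolation-error sum times $\|(\!phi,\!v,z)\|_{\H_h}$; both factors $\|(\!phi,\!v,z)\|_{\H_h}$ are dominated by the denominator, so these two terms contribute the ``$1\cdot\{\cdots\}^{1/2}$'' part of the bound. Next I would handle the $c$-term via \eqref{c-error}: it is bounded by $\|(\N,\!eta)\|_{\V_h}$ times the stress interpolation error, and since $\eps\|(\N,\!eta)\|_{\V_h}$ appears in the denominator, dividing produces a factor $\eps^{-1}$ multiplying the stress interpolation error --- this is absorbed into the ``$\eps^{-1}(\cdots)$'' part of the first bracket, with the geometric quantity there being at least a constant. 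The only term requiring the carefully chosen interpolation is $b(\N,\!eta;\!theta\e-\!theta^I,\!u\e-\!u^I,w\e-w^I)$: by Lemma~\ref{b3-error-lem} it is bounded by $\|(\N,\!eta)\|_{\V_h}$ times the geometric max (Christoffel and curvature $W^{2,\infty}$-seminorms scaled by $h^3_\tau$) times $\big(\sum_\tau h^{-2}_\tau(|u\e_\alpha-u^I_\alpha|^2_{0,\tau}+|w\e-w^I|^2_{0,\tau})\big)^{1/2}$, and again dividing by $\eps\|(\N,\!eta)\|_{\V_h}$ in the denominator yields the $\eps^{-1}$ and the geometric factor in the first bracket of \eqref{N-fem-error-lem}, while the $h^{-2}_\tau$-weighted $L^2$ interpolation error is subsumed in the $k=0$ contribution ($h^{2\cdot0-2}_\tau=h^{-2}_\tau$) of the brace.

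Finally I would add the four bounds, noting that each numerator term has been written as (constant)$\times$(denominator)$\times$(a square-rooted sum of interpolation-error seminorms), and that all the square-rooted sums are individually dominated by the single large brace in \eqref{N-fem-error-lem} (whose $k=0,1,2$ displacement/rotation terms and $k=0,1$ stress terms cover every seminorm that appears); the geometric prefactors are all dominated by $1$ plus the $\eps^{-1}$-times-geometric-max quantity in the first bracket. Canceling the denominator against the supremum then gives \eqref{N-fem-error-lem}. The main obstacle is bookkeeping rather than anything deep: one must check that the $h_\tau$-powers produced by the trace and inverse inequalities in Lemmas~\ref{a-error-lem}--\ref{b3-error-lem} match exactly the weights $h^{2k-2}_\tau$ and $h^{2k}_\tau$ in \eqref{N-fem-error-lem}, and that the factor $\eps^{-1}$ is attached to precisely the $c$-term and $b$-term contributions (and not, spuriously, to the $a$- and $b(\M\e-\M^I,\cdots)$-terms), which is why those latter two were estimated against $\|(\!phi,\!v,z)\|_{\H_h}$ alone. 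No new inequality is needed beyond what is already proved; the proof is the synthesis step.
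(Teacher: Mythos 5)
Your overall route is exactly the paper's: Theorem~\ref{N-fem-lem} is obtained by inserting the four bounds \eqref{a-error}, \eqref{b2-error}, \eqref{b3-error}, \eqref{c-error} into the numerator of \eqref{error-fraction} and dividing against the appropriate pieces of the denominator, and your treatment of the $a$-term, the $b(\M\e-\M^I,\!xi\e-\!xi^I;\cdot)$-term and the $b(\N,\!eta;\cdot)$-term (including the matching of the $h^{-2}_\tau$ weight with the $k=0$ part of the brace and the origin of the $\eps^{-1}$ geometric factor) is correct.

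There is, however, a genuine error in your handling of the $c$-term. In the numerator of \eqref{error-fraction} this term carries the prefactor $\eps^2$, i.e.\ it is $\eps^2 c(\M\e-\M^I,\!xi\e-\!xi^I;\N,\!eta)$. Combining with \eqref{c-error} and dividing by the $\eps\|(\N,\!eta)\|_{\V_h}$ part of the denominator therefore yields a factor $\eps$ (hence at most a constant) times the $L^2$ stress interpolation error, which is exactly why it fits under the ``$1$'' in the first bracket of \eqref{N-fem-error-lem}. You instead dropped the $\eps^2$, claimed the division produces $\eps^{-1}$ times the stress error, and then argued this is absorbed into the $\eps^{-1}\max_\tau h^3_\tau(\cdots)$ term because ``the geometric quantity there is at least a constant.'' That absorption is false: the quantity $\max_\tau h^3_\tau\bigl(\sum|\Gamma^\lambda_{\alpha\beta}|_{2,\infty,\tau}+\sum|b_{\alpha\beta}|_{2,\infty,\tau}+\sum|b^\beta_\alpha|_{2,\infty,\tau}\bigr)$ vanishes whenever the geometric coefficients are piecewise linear (the very case singled out in the introduction as locking-free) and in any event tends to zero as $h\to 0$, so a bare $\eps^{-1}$ cannot be hidden in it; taken literally, your argument would not deliver the stated estimate in that case. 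The fix is simply to keep the $\eps^2$ prefactor, after which the $c$-contribution is harmless and only the $b(\N,\!eta;\!theta\e-\!theta^I,\!u\e-\!u^I,w\e-w^I)$ term generates the $\eps^{-1}$ in the bracket.
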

Although we see an estimate for the weak (semi) norm $|(\M^h-\M^I, \!xi^h-\!xi^I)|_{\overline\V_h}$ from this inequality,
we do not know how to interpret  it.
We therefore can not make any statement on the accuracy of approximating $(\M\e, \!xi\e)$ by the part of the finite element solution
$(\M^h, \!xi^h)$.
We will not pursue this lead, but concentrate on the error estimate for the primary variables.
We assume that  for fixed $\eps$, the Naghdi model solution has the $H^3$ regularity. 
Under this assumption, components of the scaled membrane stress tensor and scaled transverse shear stress vector have the $H^2$ 
regularity. Note that this regularity assumption does not 
imply that the $H^3$ norm of the model primary solution or $H^2$ norm 
of the scaled membrane stress and transverse shear stress  
 are uniformly bounded. Instead, it is very likely that when $\eps\to 0$ these functions would grow unboundedly in these spaces.
We have the following theorem on the error estimate for the finite element method.

\begin{thm}
If the Naghdi model solution has the regularity that $\theta\e_\alpha$,  $u\e_\alpha$,  and $w\e$ have finite $H^3$ norms, then
there is a constant $C$ that is independent of the triangulation $\T_h$ and the shell thickness $\eps$, such that
\begin{multline*}
\|(\!theta\e-\!theta^h, \!u\e-\!u^h, w\e-w^h)\|_{\H_h}\\
\le C
\left[1+\eps^{-1}
\max_{\tau\in\T_h}h^3_\tau\left(\sum_{\alpha,\beta,\lambda=1,2}|\Gamma^{\lambda}_{\alpha\beta}|_{2,\infty,\tau}+
\sum_{\alpha,\beta=1,2}|b_{\alpha\beta}|_{2,\infty,\tau}+\sum_{\alpha,\beta=1,2}|b^\beta_\alpha|_{2,\infty,\tau}\right)
\right]
\\
\left\{
\sum_{\tau\in\T_h}h^4_{\tau}\left[\sum_{\alpha=1,2}\left(\|\theta\e_\alpha\|^2_{3,\tau}+\|u\e_\alpha\|^2_{3,\tau}\right)+\|w\e\|^2_{3,\tau}+
\sum_{\alpha, \beta=1,2}\|\M^{\eps\alpha\beta}\|^2_{2,\tau}+\sum_{\alpha=1,2}\|\xi^{\eps\alpha}\|^2_{2,\tau}
\right]\right\}^{1/2}.
\end{multline*}
Here $(\!theta^h, \!u^h, w^h)$ is the primary part of the solution of 
the finite element model \eqref{N-fem} with the finite element space defined by \eqref{FE-space}. The norm $\|\cdot\|_{\H_h}$
is defined by \eqref{Hh-norm}.
\end{thm}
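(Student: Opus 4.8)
\emph{Proof proposal.} The plan is to deduce the estimate from Theorem~\ref{N-fem-lem} together with standard element-local approximation bounds, via the triangle inequality
\begin{multline*}
\|(\!theta\e-\!theta^h, \!u\e-\!u^h, w\e-w^h)\|_{\H_h}\le
\|(\!theta\e-\!theta^I, \!u\e-\!u^I, w\e-w^I)\|_{\H_h}\\
+\|(\!theta^h-\!theta^I, \!u^h-\!u^I, w^h-w^I)\|_{\H_h},
\end{multline*}
where $(\!theta^I, \!u^I, w^I)$ is the interpolant from \eqref{thetaI}--\eqref{uwI-edge2}, and for the membrane and shear stresses I would take $(\M^I, \!xi^I)$ to be the componentwise Lagrange interpolant into the continuous piecewise linear space, which is well defined since $H^2(\tau)\hookrightarrow C^0(\bar\tau)$ in two dimensions. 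The second term on the right is bounded by the right-hand side of \eqref{N-fem-error-lem} (discarding the nonnegative $(\M^h-\M^I,\!xi^h-\!xi^I)$ contributions on its left-hand side), so everything reduces to (i) the element-wise interpolation estimates and (ii) a bound on the first term.

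For (i): on each $\tau\in\T_h$ the operators \eqref{thetaI}, \eqref{uwI-interior}, \eqref{uwI-edge1}, \eqref{uwI-edge2} are the $\sqrt a$-weighted $L^2(\tau)$ projection onto a space containing $P^2(\tau)$ (augmented, in the boundary cases, by matching the zero and first moments on the free edges); in particular each reproduces $P^2(\tau)$. The $\sqrt a$-weighted and unweighted $L^2(\tau)$ norms being uniformly equivalent, the Bramble--Hilbert lemma gives $|f-f^I|_{0,\tau}\le Ch_\tau^3\|f\|_{3,\tau}$ for $f\in H^3(\tau)$, and then an inverse inequality applied to the polynomial $\Pi_{P^2}f-f^I$ (combined with the standard bound for $|f-\Pi_{P^2}f|_{k,\tau}$) yields
\begin{equation*}
|f-f^I|_{k,\tau}\le Ch_\tau^{3-k}\|f\|_{3,\tau},\qquad k=0,1,2,
\end{equation*}
with $C$ depending only on $\K$ and the shell mid-surface; likewise $|g-g^I|_{k,\tau}\le Ch_\tau^{2-k}\|g\|_{2,\tau}$ for $k=0,1$ and $g\in H^2(\tau)$. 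Inserting these into the brace on the right-hand side of \eqref{N-fem-error-lem} and using $h_\tau^{2k-2}h_\tau^{2(3-k)}=h_\tau^4$ and $h_\tau^{2k}h_\tau^{2(2-k)}=h_\tau^4$, that brace is bounded by $C\{\sum_{\tau\in\T_h}h_\tau^4[\sum_\alpha(\|\theta\e_\alpha\|^2_{3,\tau}+\|u\e_\alpha\|^2_{3,\tau})+\|w\e\|^2_{3,\tau}+\sum_{\alpha,\beta}\|\M^{\eps\alpha\beta}\|^2_{2,\tau}+\sum_\alpha\|\xi^{\eps\alpha}\|^2_{2,\tau}]\}^{1/2}$, which is exactly the bracketed factor in the asserted estimate.

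For (ii): each summand of $\|(\!theta\e-\!theta^I,\!u\e-\!u^I,w\e-w^I)\|_{\H_h}^2$ is either an interior element term $\|\cdot\|_{1,\tau}^2$, an interior-edge jump $h_e^{-1}\int_e\lbra\cdot\rbra^2$, or an essential-boundary term $h_e^{-1}\int_e(\cdot)^2$. Since $\theta\e_\alpha,u\e_\alpha,w\e$ are continuous across interior edges and vanish on the relevant boundary pieces, each jump and boundary integrand equals the corresponding one for the interpolation error, and the trace inequality \eqref{trace} bounds each such term by $C\sum_\delta(h_{\tau_\delta}^{-2}|\,\cdot-\cdot^I|_{0,\tau_\delta}^2+|\,\cdot-\cdot^I|_{1,\tau_\delta}^2)$; combining with (i) gives $\|(\!theta\e-\!theta^I,\!u\e-\!u^I,w\e-w^I)\|_{\H_h}\le C\{\sum_\tau h_\tau^4[\cdots]\}^{1/2}$, which is dominated by the full prefactor $[1+\eps^{-1}\max_\tau h_\tau^3(\cdots)]$ times the same brace. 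Adding the two contributions yields the theorem. I expect the only genuine technical point to be the uniform-in-$\T_h$ validity of the interpolation estimates for the two enriched elements in (i): the edge-moment degrees of freedom and the non-affine weight $\sqrt a$ require the usual scaling/affine-equivalence argument to be carried out with some care (in particular, controlling the enrichment coefficients $c^\alpha$ through a trace estimate for $f-\Pi_{P^2}f$ on the free edges), but this is otherwise routine.
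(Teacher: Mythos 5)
Your proposal is correct and follows essentially the same route as the paper: triangle inequality, bounding the $(\!theta^h-\!theta^I,\ldots)$ term by Theorem~\ref{N-fem-lem} with nodal interpolants for $\M\e,\!xi\e$, the trace inequality \eqref{trace} for the edge terms of the interpolation-error norm, and elementwise $O(h_\tau^{3-k})$ estimates for the weighted projections \eqref{thetaI}--\eqref{uwI-edge2}. The one technical point you flag for the enriched elements is handled in the paper exactly as you suggest, via a scaling argument in which the edge-moment degrees of freedom force the stability bound $\|U^I_\alpha\|_{0,\T}\le C\|U_\alpha\|_{1,\T}$ on the unit-size triangle (with Verf\"urth's result playing the role of your Bramble--Hilbert step to keep constants dependent only on $\K$ and the ratio $\max_\tau a/\min_\tau a$).
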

\begin{proof}
In view of the triangle inequality, we have
\begin{multline*}
\|(\!theta\e-\!theta^h, \!u\e-\!u^h, w\e-w^h)\|_{\H_h}\\
\le \|(\!theta\e-\!theta^I, \!u\e-\!u^I, w\e-w^I)\|_{\H_h}+\|(\!theta^h-\!theta^I, \!u^h-\!u^I, w^h-w^I)\|_{\H_h}.
\end{multline*}
Using the trace inequality \eqref{trace} to the edge terms in the norm  $\|(\!theta\e-\!theta^I, \!u\e-\!u^I, w\e-w^I)\|_{\H_h}$, cf., \eqref{Hh-norm}, we get
\begin{multline*}
\|(\!theta\e-\!theta^I, \!u\e-\!u^I, w\e-w^I)\|_{\H_h}\le C\\
\left\{
\sum_{\tau\in\T_h}\sum_{k=0, 1}h^{2k-2}_\tau\left[
\sum_{\alpha=1,2}\left(|u\e_{\alpha}-u^I_{\alpha}|^2_{k, \tau}+|\theta\e_{\alpha}-\theta^I_{\alpha}|^2_{k, \tau}\right)
+|w\e-w^I|^2_{k, \tau}
\right]\right\}^{1/2}.
\end{multline*}
For each $\tau\in\T_h$, we establish that
\begin{equation}\label{scaled-estimate}
\begin{gathered}
\sum_{k=0}^2h^{2k-2}_\tau|\theta\e_{\alpha}-\theta^I_{\alpha}|^2_{k, \tau}\le C h^4_\tau|\theta\e_\alpha|_{3,\tau},\\
\sum_{k=0}^2h^{2k-2}_\tau|u\e_{\alpha}-u^I_{\alpha}|^2_{k, \tau}\le C h^4_\tau|u\e_\alpha|_{3,\tau},\\
\sum_{k=0}^2h^{2k-2}_\tau|w\e-w^I|^2_{k, \tau}\le C h^4_\tau|w\e|^2_{3,\tau}.
\end{gathered}
\end{equation}

We scale $\tau$ to a similar triangle $\T$ whose diameter is $1$ by the scaling $X_\alpha=h^{-1}_\tau x_\alpha$. Let $\Theta_\beta(X_\alpha)=\theta\e_\beta(x_\alpha)$, 
$U_\beta(X_\alpha)=u\e_\beta(x_\alpha)$, 
$W(X_\alpha)=w\e(x_\alpha)$,
$A(X_\alpha)=a(x_\alpha)$, $\Theta^I_\beta(X_\alpha)=\theta^I_\beta(x_\alpha)$, $U^I_\beta(X_\alpha)=u^I_\beta(x_\alpha)$, and
$W^I(X_\alpha)=w^I(x_\alpha)$. It is easy to see that $\Theta^I_\alpha$ is the projection
of $\Theta_\alpha$ into $P^2(\T)$ in the space $L^2(\T)$ weighted by $\sqrt{A(X_\alpha)}$.
This projection preserves quadratic  polynomials and we have the bound that
\begin{equation}\label{ThetaI-bound}
\|\Theta_\alpha^I\|_{0,\T}\le \left[\frac{\max_{\tau}a}{\min_{\tau}a}\right]^{1/4}\|\Theta_\alpha\|_{0,\T}.
\end{equation}
For a $\Theta_\alpha\in H^3(\T)$ and any quadratic polynomial $p$, using inverse inequality, there is a $C$ depending on the shape regularity of $\T$
such that
\begin{equation*}
\|\Theta_\alpha-\Theta^I_\alpha\|_{3, \T}\le \|\Theta_\alpha-p\|_{3, \T}+\|(\Theta_\alpha-p)^I\|_{3, \T}\le \|\Theta_\alpha-p\|_{3, \T}+C\|(\Theta_\alpha-p)^I\|_{0, \T}.
\end{equation*}
Therefore, there is a $C$ depending on the shape regularity of $\T$ and the  ratio ${\max_{\tau}a}/{\min_{\tau}a}$ such that
\begin{equation*}
\|\Theta_\alpha-\Theta^I_\alpha\|_{3, \T}\le C\|\Theta_\alpha-p\|_{3, \T}\ \ \forall\ p\in P^2(\T).
\end{equation*}
Using the interpolation operator of \cite{Verfurth}, we can choose a $p\in P^2(\T)$ and an absolute constant such that
 \begin{equation*}
\|\Theta_\alpha-\Theta^I_\alpha\|_{3, \T}\le C\|\Theta_\alpha-p\|_{3, \T}\le C|\Theta_\alpha|_{3, \T}
\end{equation*}
Scale this estimate from $\T$ to $\tau$, we obtain the first estimate in \eqref{scaled-estimate}.

If $\tau$ has no edge on the free boundary $\E^F_h$, the second inequality in \eqref{scaled-estimate} is proved
in the same way. 
If $\tau$ has one or two edges on the free boundary, in place of the estimate
\eqref{ThetaI-bound}, we have that there is a $C$ depending only on the shape regularity of $\T$ such that
\begin{equation*} 
\|U^I_\alpha\|_{0,\T}\le C \|U_\alpha\|_{1,\T}\ \ \forall\ U_\alpha \in H^1(\T).
\end{equation*}
For any $p\in P^2(\T)$, we have
\begin{multline*}
\|U_\alpha-U^I_\alpha\|_{3, \T}\le \|U_\alpha-p\|_{3, \T}+\|(U_\alpha-p)^I\|_{3, \T}\\
\le \|U_\alpha-p\|_{3, \T}+C\|(U_\alpha-p)^I\|_{0, \T}
\le \|U_\alpha-p\|_{3, \T}+C\|U_\alpha-p\|_{1, \T}
\le C\|U_\alpha-p\|_{3, \T}.
\end{multline*}
Using the interpolation operator of \cite{Verfurth} again, we get
\begin{equation*}
\|U_\alpha-U^I_\alpha\|_{3, \T}\le C|U_\alpha|_{3,\T}.
\end{equation*}
Here $C$ only depends on the shape regularity of $\T$.
The second inequality in \eqref{scaled-estimate} then follows the scaling from $\T$ to $\tau$.
The third one is the same as the second one with  $\alpha=1$ or $2$.

Finally, we need to show that there exist interpolations $\M^{I\alpha\beta}$ and $\xi^{I\alpha}$ from continuous piecewise linear functions for $\M^{\eps\alpha\beta}$
and $\xi^{\eps\alpha}$, respectively,
such that
\begin{equation*}
\begin{gathered}
\sum_{\tau\in\T_h}\left(|\M^{\eps\alpha\beta}-\M^{I\alpha\beta}|^2_{0, \tau}+h^2_{\tau}|\M^{\eps\alpha\beta}-\M^{I\alpha\beta}|^2_{1, \tau}\right)
\le C
\sum_{\tau\in\T_h}h^4_{\tau}\|\M^{\eps\alpha\beta}\|^2_{2,\tau},\\
\sum_{\tau\in\T_h}\left(|\xi^{\eps\alpha}-\xi^{I\alpha}|^2_{0, \tau}+h^2_{\tau}|\xi^{\eps\alpha}-\xi^{I\alpha}|^2_{1, \tau}\right)
\le C
\sum_{\tau\in\T_h}h^4_{\tau}\|\xi^{\eps\alpha}\|^2_{2,\tau}.
\end{gathered}
\end{equation*}
This requirement can be 
met by choosing $\M^{I\alpha\beta}$ and $\xi^{I\alpha}$ as the nodal point interpolations  of $\M^{\eps\alpha\beta}$
and $\xi^{\eps\alpha}$, respectively.
In view of Theorem~\ref{N-fem-lem}, the proof is completed.
\end{proof}

\section{Higher order finite elements}

The finite element model \eqref{N-fem} can be defined on finite element spaces of higher order polynomials.
For integer $k>2$, we use
discontinuous piecewise degree $k$ polynomials to approximate the rotation components $\theta_\alpha$, use 
discontinuous piecewise degree $k$ polynomials with some modifications on elements that have edges on the free boundary $\E^F_h$
to approximate the displacement components $u_\alpha$ and $w$, 
and use 
continuous piecewise degree $k-1$ polynomials for components of the scaled membrane stress tensor $\M^{\alpha\beta}$ and 
components of the transverse shear stress vector $\xi^{\alpha}$.
In view of the interpolation requirements \eqref{uwI-edge1} and \eqref{uwI-edge2}, on an element $\tau$ that has one edge 
on the free boundary, we need to add $k$ polynomials to $P^k(\tau)$ for the variables $u_\alpha$ and $w$.
If an element $\tau$ has two edges on the free boundary, then we need to add $2k$ polynomials to $P^k(\tau)$ for the displacement 
components.

For such higher order finite element methods, we have the following theory.
If the Naghdi model solution has the regularity that $\theta\e_\alpha$,  $u\e_\alpha$,  and $w\e$ have finite norms in $H^{k+1}$, then
there is a constant $C$ that could be dependent on the shell mid surface, the Lam\'e coefficients of the elastic material, the polynomial degree $k$, 
and the shape regularity $\K$ of $\T_h$, but otherwise independent of the triangulation and the shell thickness $\eps$, such that
\begin{multline*}
\|(\!theta\e-\!theta^h, \!u\e-\!u^h, w\e-w^h)\|_{\H_h}\\
\le C
\left[1+\eps^{-1}
\max_{\tau\in\T_h}h^{k+1}_\tau\left(\sum_{\alpha,\beta,\lambda=1,2}|\Gamma^{\lambda}_{\alpha\beta}|_{k,\infty,\tau}+
\sum_{\alpha,\beta=1,2}|b_{\alpha\beta}|_{k,\infty,\tau}+\sum_{\alpha,\beta=1,2}|b^\beta_\alpha|_{k,\infty,\tau}\right)
\right]
\\
\left\{
\sum_{\tau\in\T_h}h^{2k}_{\tau}\left[\sum_{\alpha=1,2}\left(\|\theta\e_\alpha\|^2_{k+1,\tau}+\|u\e_\alpha\|^2_{k+1,\tau}\right)+\|w\e\|^2_{k+1,\tau}\right.\right.\\
\left.\left.+
\sum_{\alpha, \beta=1,2}\|\M^{\eps\alpha\beta}\|^2_{k,\tau}+\sum_{\alpha=1,2}\|\xi^{\eps\alpha}\|^2_{k,\tau}
\right]\right\}^{1/2}.
\end{multline*}
With $k$ being raised,  the locking effect is further reduced, and accuracy of the finite element approximation is enhanced.

\bibliographystyle{plain}

\appendix
\section*{Appendix}
\renewcommand{\theequation}{A.\arabic{equation}}
\renewcommand{\thesubsection}{A.\arabic{subsection}}
\setcounter{equation}{0}
\setcounter{subsection}{0}
\subsection{Consistency of the finite element model}
We verify that the solution $\!theta\e$, $\!u\e$, $w\e$, $\M\e$, $\!xi\e$ of the Naghdi model in mixed form \eqref{N-P-model} satisfies
the equation of the finite element model \eqref{N-fem} in which the test function $\!phi, \!v, z, \N, \!eta$ can be any piecewise
functions of sufficient regularity, not necessarily polynomials.
Under the assumption that  the shell material has constant
Lam\'e coefficients, we have $a^{\alpha\beta\gamma\delta}|_{\tau}=a_{\alpha\beta\gamma\delta}|_{\tau}=0$.
This is due to the fact that $a^{\alpha\beta}|_{\gamma}=a_{\alpha\beta}|_{\gamma}=0$.
On a region  $\t\tau\subset\t\Omega$, for any vectors $\theta_\alpha$, $\phi_\alpha$, $u_\alpha$, $v_\alpha$, and $\xi_\alpha$, scalars $w$ and $z$, 
and symmetric tensor $\M^{\alpha\beta}$,
the following identities follow from the Green's theorem \eqref{Green} directly.
\begin{multline}\label{id-1}
\int_{\t\tau}
a^{\alpha\beta\sigma\tau}\rho_{\sigma\tau}(\!theta, \!u, w)
\phi_{\alpha|\beta}=
-\int_{\t\tau}
a^{\gamma\beta\sigma\tau}\rho_{\sigma\tau|\beta}(\!theta, \!u, w)
\phi_{\alpha}+
\int_{\partial\t\tau}a^{\alpha\beta\sigma\tau}\rho_{\sigma\tau}(\!theta, \!u, w)
\phi_{\alpha}n_{\beta},\hfill
\end{multline}
\begin{multline*}
\int_{\t\tau}
a^{\alpha\beta\sigma\tau}\rho_{\sigma\tau}(\!theta, \!u, w)
b^{\gamma}_{\alpha}v_{\gamma|\beta}
=
-\int_{\t\tau}
a^{\alpha\beta\sigma\tau}[\rho_{\sigma\tau}(\!theta, \!u, w)
b^{\gamma}_{\alpha}]|_{\beta}v_{\gamma}
+\int_{\partial\t\tau}a^{\alpha\beta\sigma\tau}\rho_{\sigma\tau}(\!theta, \!u, w){n_{\beta}}
b^{\gamma}_{\alpha}v_{\gamma},\hfill
\end{multline*}
\begin{multline*}
\int_{\t\tau}a^{\alpha\beta\gamma\delta}\gamma_{\gamma\delta}(\!u, w)\frac12(v_{\alpha|\beta}+v_{\beta|\alpha})=
-\int_{\t\tau}a^{\alpha\beta\gamma\delta}\gamma_{\gamma\delta|\beta}(\!u, w)v_{\alpha}
+\int_{\partial\t\tau}a^{\alpha\beta\gamma\delta}\gamma_{\gamma\delta}(\!u, w){n_{\beta}}v_{\alpha},\hfill
\end{multline*}
\begin{multline*}
\int_{\t\tau}a^{\alpha\beta}\tau_\beta(\!theta, \!u, w)\partial_\alpha z=
-\int_{\t\tau}a^{\alpha\beta}\tau_{\beta|\alpha}(\!theta, \!u, w)z
+\int_{\partial\t\tau}a^{\alpha\beta}\tau_\beta(\!theta, \!u, w){n_{\alpha}} z,\hfill
\end{multline*}
\begin{multline*}
\int_{\t\tau}\M^{\alpha\beta}\frac12(v_{\alpha|\beta}+v_{\beta|\alpha})=
-\int_{\t\tau}\M^{\alpha\beta}|_{\beta}v_{\alpha}
+\int_{\partial\t\tau}\M^{\alpha\beta}{n_{\beta}}v_{\alpha},\hfill
\end{multline*}
\begin{multline*}
\int_{\t\tau}\xi^{\alpha}\partial_\alpha z=
-\int_{\t\tau}\xi^{\alpha}|_{\alpha}z
+\int_{\partial\t\tau}\xi^{\alpha}{n_{\alpha}} z.\hfill
\end{multline*}

Using these identities on $\t\Omega$ for several times,
we  write the Naghdi model \eqref{N-P-model} in the following mixed strong form.
\begin{multline}\label{N-P-strong}
\frac13\left[-a^{\alpha\beta\lambda\gamma}\rho_{\lambda\gamma|\beta}(\!theta, \!u, w)
+\kappa\mu a^{\alpha\beta}\tau_\beta(\!theta, \!u, w)\right]+\xi^\alpha=0 \text{ in }\Omega,\hfill
\end{multline}
\begin{multline*}
\frac13\left[a^{\delta\beta\lambda\gamma}[\rho_{\lambda\gamma}(\!theta, \!u, w)b^\alpha_\delta]|_\beta
-a^{\alpha\beta\lambda\gamma}\gamma_{\lambda\gamma|\beta}(\!u, w)
+\kappa\mu a^{\gamma\beta}\tau_\beta(\!theta, \!u, w)b^\alpha_\gamma\right]\\
-\M^{\alpha\beta}|_\beta+\xi^\gamma b^\alpha_\gamma=p^\alpha \text{ in }\Omega,
\end{multline*}
\begin{multline*}
\frac13\left[a^{\alpha\beta\lambda\gamma}\rho_{\lambda\gamma}(\!theta, \!u, w)c_{\alpha\beta}
-a^{\alpha\beta\lambda\gamma}\gamma_{\lambda\gamma}(\!u, w)b_{\alpha\beta}
-\kappa\mu a^{\alpha\beta}\tau_{\alpha|\beta}(\!theta, \!u, w)\right]\\
-\M^{\alpha\beta}b_{\alpha\beta}-\xi^\alpha|_\alpha=p^3 \text{ in }\Omega,
\end{multline*}
\begin{multline*}
\gamma_{\alpha\beta}(\!u, w)
-\eps^2a_{\alpha\beta\lambda\gamma}\M^{\lambda\gamma}=0,\quad
\tau_\alpha(\!theta, \!u, w)-
\eps^2\frac{1}{\kappa\mu}a_{\alpha\beta}\xi^{\eps\beta}=0 \text{ in }\Omega,\hfill
\end{multline*}
\begin{multline*}
\frac13a^{\alpha\beta\lambda\gamma}\rho_{\lambda\gamma}(\!theta, \!u, w)n_\beta=r^\alpha \text{ on }\partial^{S\cup F}\Omega,\hfill
\end{multline*}
\begin{multline*}
\frac13\left[-a^{\delta\beta\lambda\gamma}\rho_{\lambda\gamma}(\!theta, \!u, w)b^\alpha_\delta n_\beta
+a^{\alpha\beta\lambda\gamma}\gamma_{\lambda\gamma}(\!u, w)n_\beta\right]+\M^{\alpha\beta}n_\beta=q^\alpha \text{ on }\partial^{F}\Omega,\hfill
\end{multline*}
\begin{multline*}
\frac13\kappa\mu a^{\alpha\beta}\tau_{\beta}(\!theta, \!u, w)n_\alpha
+\xi^\alpha n_\alpha=q^3 \text{ on }\partial^{F}\Omega,\hfill
\end{multline*}
\begin{multline*}
u_\alpha=0,\ w=0\text{ on }\partial^{D\cup S}\Omega,\quad \theta_\alpha=0\text{ on }\partial^{D}\Omega.\hfill
\end{multline*}

For any piecewise vectors $\theta_\alpha$, $\phi_\alpha$, $u_\alpha$, $v_\alpha$, $\xi_\alpha$, and $\eta_\alpha$, scalars $w$ and $z$, and symmetric tensors
$\M^{\alpha\beta}$ and $\N^{\alpha\beta}$, on $\Omega_h$, summing up the bilinear forms
defined by \eqref{form_ub_a}, \eqref{form_b}, and \eqref{form_c}, we have

\begin{multline*}
\ub a(\!theta, \!u, w; \!phi, \!v, z)+b(\M, \!xi; \!phi, \!v, z)-b(\N, \!eta; \!theta, \!u, w)+\eps^2c(\M, \!xi; \N,\!eta)\hfill \\
=
\frac13\left\{\int_{\t\Omega_h}
\left[a^{\alpha\beta\lambda\gamma}\rho_{\lambda\gamma}(\!theta, \!u, w)
\rho_{\alpha\beta}(\!phi, \!v, z)
+
a^{\alpha\beta\lambda\gamma}\gamma_{\lambda\gamma}(\!u, w)
\gamma_{\alpha\beta}(\!v,z)\right.\right.\\
\left.+\kappa\mu a^{\alpha\beta}\tau_\beta(\!theta, \!u, w)\tau_\alpha(\!phi, \!v, z)\right]
\end{multline*}
\begin{multline*}
-\int_{\t\E^0_h}a^{\alpha\beta\lambda\gamma}\lbrac\rho_{\lambda\gamma}(\!phi, \!v, z)\rbrac\lbra\theta_\alpha\rbra_{n_\beta}
-\int_{\t\E^0_h}a^{\alpha\beta\lambda\gamma}\lbrac\rho_{\lambda\gamma}(\!theta, \!u, w)\rbrac\lbra\phi_\alpha\rbra_{n_\beta}\\
-\int_{\t\E^0_h}\kappa\mu a^{\alpha\beta}\lbrac\tau_\beta(\!theta, \!v, z)\rbrac\lbra w\rbra_{n_\alpha}
-\int_{\t\E^0_h}\kappa\mu a^{\alpha\beta}\lbrac\tau_\beta(\!theta, \!u, w)\rbrac\lbra z\rbra_{n_\alpha}
\end{multline*}
\begin{multline*}
+\int_{\t\E^0_h}\left[a^{\alpha\beta\lambda\gamma}\lbrac\rho_{\lambda\gamma}(\!phi, \!v, z)\rbrac b^\delta_\alpha-a^{\delta\beta\alpha\gamma}
\lbrac\gamma_{\alpha\gamma}(\!v, z)\rbrac
\right]
\lbra u_\delta\rbra_{n_\beta}\\
+\int_{\t\E^0_h}\left[a^{\alpha\beta\lambda\gamma}\lbrac\rho_{\lambda\gamma}(\!theta, \!u, w)\rbrac b^\delta_\alpha-a^{\delta\beta\alpha\gamma}
\lbrac\gamma_{\alpha\gamma}(\!u, w)\rbrac
\right]
\lbra v_\delta\rbra_{n_\beta}
\end{multline*}
\begin{multline*}
+\int_{\t\E^{D\cup S}_h}\left[a^{\alpha\beta\lambda\gamma}\rho_{\lambda\gamma}(\!phi, \!v, z)b^\delta_\alpha-a^{\delta\beta\alpha\gamma}
\gamma_{\alpha\gamma}(\!v, z)
\right]
u_\delta{n_\beta}\\
+\int_{\t\E^{D\cup S}_h}\left[a^{\alpha\beta\lambda\gamma}\rho_{\lambda\gamma}(\!theta, \!u, w)b^\delta_\alpha-a^{\delta\beta\alpha\gamma}
\gamma_{\alpha\gamma}(\!u, w)
\right]
v_\delta{n_\beta}\\
-\int_{\t\E^{D\cup S}_h}\kappa\mu a^{\alpha\beta}\tau_\beta(\!phi, \!v, z)w{n_\alpha}
-\int_{\t\E^{D\cup S}_h}\kappa\mu a^{\alpha\beta}\tau_\beta(\!theta, \!u, w)z{n_\alpha}\\
\left.
-\int_{\t\E^D_h}a^{\alpha\beta\lambda\gamma}\rho_{\lambda\gamma}(\!phi, \!v, z)\theta_\alpha{n_\beta}
-\int_{\t\E^D_h}a^{\alpha\beta\lambda\gamma}\rho_{\lambda\gamma}(\!theta, \!u, w)\phi_\alpha{n_\beta}\right\}
\end{multline*}
\begin{multline*}
+\int_{\t\Omega_h}\left[\M^{\alpha\beta}\gamma_{\alpha\beta}(\!v, z)+\xi^\alpha\tau_\alpha(\!phi, \!v, z)\right]\\
-\int_{\t\E^0_h}\left(\lbrac\M^{\alpha\beta}\rbrac\lbra v_{\alpha}\rbra_{n_{\beta}}+\lbrac\xi^\alpha\rbrac\lbra z\rbra_{n_\alpha}\right)
 -\int_{\t\E^{D\cup S}_h}\left(
\M^{\alpha\beta}{n_{\beta}}v_{\alpha}+\xi^\alpha n_\alpha z\right)
\end{multline*}
\begin{multline*}
-\int_{\t\Omega_h}\left[\N^{\alpha\beta}\gamma_{\alpha\beta}(\!u, w)+\eta^\alpha\tau_\alpha(\!theta, \!u, w)\right]\\
+\int_{\t\E^0_h}\left(\lbrac\N^{\alpha\beta}\rbrac\lbra u_{\alpha}\rbra_{n_{\beta}}+\lbrac\eta^\alpha\rbrac\lbra w\rbra_{n_\alpha}\right)
 +\int_{\t\E^{D\cup S}_h}\left(
\N^{\alpha\beta}{n_{\beta}}u_{\alpha}+\eta^\alpha n_\alpha w\right)\\
+\eps^2\int_{\t\Omega_h}\left(a_{\alpha\beta\gamma\delta}\M^{\gamma\delta}\N^{\alpha\beta}
+\frac{1}{\kappa\mu}a_{\alpha\beta}\xi^\alpha\eta^\beta\right).
\end{multline*}

Using the identities \eqref{id-1} on every element $\t\tau\in\t\T_h$, we rewrite the above form as
\begin{multline}\label{N-ub-a-weak}
\ub a(\!theta, \!u, w; \!phi, \!v, z)+b(\M, \!xi; \!phi, \!v, z)-b(\N, \!eta; \!theta, \!u, w)+\eps^2c(\M, \!xi; \N,\!eta)\\
=
\frac13\int_{\t\Omega_h}\left[-a^{\alpha\beta\lambda\gamma}\rho_{\lambda\gamma|\beta}(\!theta, \!u, w)
+\kappa\mu a^{\alpha\beta}\tau_\beta(\!theta, \!u, w)+3\xi^\alpha\right]\phi_\alpha\hfill
\end{multline}
\begin{multline*}
+\frac13\int_{\t\Omega_h}
\left\{a^{\gamma\beta\lambda\gamma}[\rho_{\lambda\gamma}(\!theta, \!u, w)b^\alpha_\gamma]|_\beta
-a^{\alpha\beta\lambda\gamma}\gamma_{\lambda\gamma|\beta}(\!u, w)
+\kappa\mu a^{\gamma\beta}\tau_\beta(\!theta, \!u, w)b^\alpha_\gamma\right.\\
\left.
-3\M^{\alpha\beta}|_\beta+3\xi^\gamma b^\alpha_\gamma\right\}v_\alpha
\end{multline*}
\begin{multline*}
+\frac13\int_{\t\Omega_h}
\left[a^{\alpha\beta\lambda\gamma}\rho_{\lambda\gamma}(\!theta, \!u, w)c_{\alpha\beta}
-a^{\alpha\beta\lambda\gamma}\gamma_{\lambda\gamma}(\!u, w)b_{\alpha\beta}
-\kappa\mu a^{\alpha\beta}\tau_{\alpha|\beta}(\!theta, \!u, w)\right.\\
\left.-3\M^{\alpha\beta}b_{\alpha\beta}-3\xi^\alpha|_\alpha\right]z
\end{multline*}
\begin{multline*}
+
\int_{\t\Omega_h}
\left[\gamma_{\alpha\beta}(\!u, w)
-\eps^2a_{\alpha\beta\lambda\gamma}\M^{\lambda\gamma}\right]\N^{\alpha\beta}
+\int_{\t\Omega_h}
\left[\tau_\alpha(\!theta, \!u, w)-
\eps^2\frac{1}{\kappa\mu}a_{\alpha\beta}\xi^{\beta}\right]\eta^\alpha\hfill
\end{multline*}
\begin{multline*}
-\frac13\int_{\t\E^0_h}a^{\alpha\beta\lambda\gamma}\lbrac\rho_{\lambda\gamma}(\!phi, \!v, z)\rbrac\lbra\theta_\alpha\rbra_{n_\beta}
+\frac13\int_{\t\E^0_h}a^{\alpha\beta\lambda\gamma}\lbra\rho_{\lambda\gamma}(\!theta, \!u, w)\rbra_{n_\beta}\lbrac\phi_\alpha\rbrac\\
-\frac13\int_{\t\E^0_h}\kappa\mu a^{\alpha\beta}\lbrac\tau_\beta(\!theta, \!v, z)\rbrac\lbra w\rbra_{n_\alpha}
+\frac13\int_{\t\E^0_h}\kappa\mu a^{\alpha\beta}\lbra\tau_\beta(\!theta, \!u, w)\rbra_{n_\alpha}\lbrac z\rbrac
\end{multline*}
\begin{multline*}
+\frac13\int_{\t\E^0_h}\left[a^{\alpha\beta\lambda\gamma}\lbrac\rho_{\lambda\gamma}(\!phi, \!v, z)\rbrac b^\delta_\alpha-a^{\delta\beta\alpha\gamma}
\lbrac\gamma_{\alpha\gamma}(\!v, z)\rbrac
\right]
\lbra u_\delta\rbra_{n_\beta}\\
-\frac13\int_{\t\E^0_h}\left[a^{\alpha\beta\lambda\gamma}\lbra\rho_{\lambda\gamma}(\!theta, \!u, w)\rbra_{n_\beta} b^\delta_\alpha-a^{\delta\beta\alpha\gamma}
\lbra\gamma_{\alpha\gamma}(\!u, w)\rbra_{n_\beta}
\right]
\lbrac v_\delta\rbrac
\end{multline*}
\begin{multline*}
+\frac13\int_{\t\E^{D\cup S}_h}\left[a^{\alpha\beta\lambda\gamma}\rho_{\lambda\gamma}(\!phi, \!v, z)b^\delta_\alpha-a^{\delta\beta\alpha\gamma}
\gamma_{\alpha\gamma}(\!v, z)
\right]
u_\delta{n_\beta}\\
-\frac13\int_{\t\E^{F}_h}\left[a^{\alpha\beta\lambda\gamma}\rho_{\lambda\gamma}(\!theta, \!u, w)b^\delta_\alpha-a^{\delta\beta\alpha\gamma}
\gamma_{\alpha\gamma}(\!u, w)
\right]
v_\delta{n_\beta}\\
-\frac13\int_{\t\E^{D\cup S}_h}\kappa\mu a^{\alpha\beta}\tau_\beta(\!phi, \!v, z)w{n_\alpha}
+\frac13\int_{\t\E^{F}_h}\kappa\mu a^{\alpha\beta}\tau_\beta(\!theta, \!u, w)z{n_\alpha}\\
-\frac13\int_{\t\E^D_h}a^{\alpha\beta\lambda\gamma}\rho_{\lambda\gamma}(\!phi, \!v, z)\theta_\alpha{n_\beta}
+\frac13\int_{\t\E^{S\cup F}_h}a^{\alpha\beta\lambda\gamma}\rho_{\lambda\gamma}(\!theta, \!u, w)\phi_\alpha{n_\beta}
\end{multline*}
\begin{multline*}
+\int_{\t\E^0_h}\left(\lbra\M^{\alpha\beta}\rbra_{n_\beta}\lbrac v_{\alpha}\rbrac+\lbra\xi^\alpha\rbra_{n_\alpha}\lbrac z\rbrac\right)
 +\int_{\t\E^{F}_h}\left(
\M^{\alpha\beta}{n_{\beta}}v_{\alpha}+\xi^\alpha n_\alpha z\right)\\
+\int_{\t\E^0_h}\left(\lbrac\N^{\alpha\beta}\rbrac\lbra u_{\alpha}\rbra_{n_{\beta}}+\lbrac\eta^\alpha\rbrac\lbra w\rbra_{n_\alpha}\right)
 +\int_{\t\E^{D\cup S}_h}\left(
\N^{\alpha\beta}{n_{\beta}}u_{\alpha}+\eta^\alpha n_\alpha w\right).
\end{multline*}
Since there is no jump in the Naghdi model solution $\!theta\e,\!u\e, w\e, \M\e, \!xi\e$, in view of the definitions \eqref{form_a} and \eqref{form_ub_a}
we have $a(\!theta\e, \!u\e, w\e; \!phi, \!v, z)=\ub a(\!theta\e, \!u\e, w\e; \!phi, \!v, z)$ for any piecewise $(\!phi, \!v, z)$.
We therefore have that for any piecewise regular test function $(\!phi, \!v, z)$
\begin{multline*}
a(\!theta\e, \!u\e, w\e; \!phi, \!v, z)+b(\M\e, \!xi\e; \!phi, \!v, z)-b(\N, \!eta; \!theta\e, \!u\e, w\e)+\eps^2c(\M\e, \!xi\e;  \N, \!eta)\\
=
\ub a(\!theta\e, \!u\e, w\e; \!phi, \!v, z)+b(\M\e, \!xi\e; \!phi, \!v, z)-b(\N, \!eta; \!theta\e, \!u\e, w\e)+\eps^2c(\M\e, \!xi\e;  \N, \!eta)
\\
=\langle\!f; \!phi, \!v, z\rangle.
\end{multline*}
The linear form $\langle\!f; \!phi, \!v, z\rangle$ is defined by \eqref{form_f}. The last equation follows from comparing
\eqref{N-ub-a-weak} with the strong form of the Naghdi model \eqref{N-P-strong}.
This verifies the consistency of the finite element model
\eqref{N-fem} with the Naghdi shell model \eqref{N-P-model}.

\subsection{Proofs of Theorems~\ref{N-primary-limit} and Theorem~\ref{isomorphism-thm}}
For completeness, we include the proofs for results on the asymptotic analysis in the in the abstract setting.
\begin{proof}[Proof of Theorems~\ref{N-primary-limit}]
In view of equation \eqref{limitas}, we have $\langle f,v\rangle-(Au^0, Av)_U=0$ $\forall v\in\ker B$. Thus there is a unique $\zeta\in W^*$ such that
$\langle f,v\rangle-(Au^0, Av)_U=\langle\zeta, Bv\rangle$.
Subtracting $(Au^0, Av)$ from both sides of the equation \eqref{prob1as} and using the fact that $Bu^0=0$, we have
\begin{equation*}
\eps^2(A(u\e-u^0),Av)_U+(B(u\e-u^0),Bv)_{\overline W}=\eps^2\langle\zeta, Bv\rangle
\quad \forall\ v\in H.
\end{equation*}
This problem is in the form that was analyzed in  \cite{Caillerie} and \cite{CR1}. By
Theorem 2.1 of \cite{CR1},  we have
\begin{equation*}
\|A(u\e-u^0)\|_U+\eps^{-1}\|Bu\e\|_{V}
+\|\eps^{-2}\pi_{\overline W}Bu\e-\zeta\|_{W^*}
\simeq\|\zeta\|_{W^*+\eps(\overline W)^*},\\
\end{equation*}
The conclusion of the theorem them follows from the fact that
$\lim_{\eps\to 0}\|\zeta\|_{W^*+\eps(\overline W)^*}=0$
\cite{BL}, and
$\|\eps^{-2}\pi_{\overline W}Bu\e-\zeta\|_{W^*}
=\|\eps^{-2}Bu\e-\M\|_{\overline{\overline W}}$. Here $\M=j_{[W^*\to\overline{\overline W}]}\zeta$.
\end{proof}

\begin{proof}[Proof of Theorem~\ref{isomorphism-thm}]
The second inequality is obvious.
To prove the first inequality, we first assume that $W$ is dense in $V$. 
Then  $V^*$ is dense in $W^*$, and  we have $(\eps V^*\cap W^*)^*=\eps^{-1}V+W$ \cite{BL}.
We also see from the definition \eqref{isomorphism-weak} that $|q|_{\overline V}=\|\pi_Vq\|_{W^*}$. Here $\pi_V:V\to V^*$ is the inverse of Riesz representation.
For any $(u, p)\in H\x V$, we let $(f, g)\in H^*\x V^*$ be the corresponding right hand side functional in
the equation \eqref{isomorphism-abs}. We write $u=u_1+u_2$ and $p=p_1+p_2$,
with $(u_1, p_1)$ solving
\begin{equation*} 
\begin{gathered}
a(u_1, v)+b(v, p_1)=\langle f, v\rangle\ \ \forall\ v\in H,\\
b(u_1, q)-\eps^2c(p_1, q)=0\ \ \forall\ q\in V,
\end{gathered}
\end{equation*}
while $(u_2, p_2)$ solves
\begin{equation*} 
\begin{gathered}
a(u_2, v)+b(v, p_2)=0\ \ \forall\ v\in H,\\
b(u_2, q)-\eps^2c(p_2, q)=\langle g, q\rangle\ \ \forall\ q\in V.
\end{gathered}
\end{equation*}
It can be shown that  $\|u_1\|_H+|p_1|_{\overline V}+\eps\|p_1\|_V\le C\|f\|_{H^*}$ \cite{ABrezzi2}. This is to say
\begin{equation}\label{iso-est1}
\|u_1\|_H+|p_1|_{\overline V}+\eps\|p_1\|_V\le C
\sup_{v\in H}\frac{|a(u, v)+b(v, p)|}{\|v\|_{H}}
\end{equation}
From the first equation of the above second system, we see that $|p_2|_{\overline V}\le \|u_2\|_H$.
Let $u_0\in H$ be an element such that $\|u_0\|_H=\|Bu_0\|_W$. We write
\begin{equation*}
\langle g, q\rangle=(i_V g, q)=(Bu_0, q)+(i_V g_1, q)=b(u_0, q)+\langle g_1, q\rangle.
\end{equation*}
We thus have
\begin{equation*} 
\begin{gathered}
a(u_2-u_0, v)+b(v, p_2)=-a(u_0, v)\ \ \forall\ v\in U,\\
b(u_2-u_0, q)-\eps^2c(p_2, q)=\langle g_1, q\rangle\ \ \forall\ q\in V.
\end{gathered}
\end{equation*}
Taking $v=u_2-u_0$ and $q=p_2$ in this equation, and sum, we have
\begin{equation*}
\|u_2-u_0\|_H^2+\eps^2\|p_2\|^2_V=-a(u_0, u_2-u_0)-(\eps^{-1}i_V g_1, \eps p_2).
\end{equation*}
Using Cauchy--Schwarz inequality, we get
\begin{equation*}
\|u_2-u_0\|_H+\eps\|p_2\|_V\le C \|u_0\|_H+\eps^{-1}\|i_V g_1\|_V.
\end{equation*}
Thus
\begin{equation*}
\|u_2\|_H+|p_2|_{\overline V}+\eps\|p_2\|_V\le C(\|u_0\|_H+\eps^{-1}\|i_V g_1\|_V)\le C
(\|Bu_0\|_W+\eps^{-1}\|i_V g_1\|_V).
\end{equation*}
Since this is valid for any decomposition of $g$, we get
\begin{multline}\label{iso-est2}
\|u_2\|_H+|p_2|_{\overline V}+\eps\|p_2\|_V\le C\|i_Vg\|_{W+\eps^{-1}V}=\|i_V g\|_{(W^*\cap\eps V^*)^*}\\=
\sup_{q\in V}\frac{\langle i _Vg, \pi_Vq\rangle}{\|\pi_Vq\|_{W^*}+\eps\|\pi_Vq\|_{V^*}}=
\sup_{q\in V}\frac{\langle g, q\rangle}{|q|_{\overline V}+\eps\|q\|_{V}}.
\end{multline}
It follows from \eqref{iso-est1} and \eqref{iso-est2} that when $W$ is dense in $V$ for any $(u, p)\in H\x V$ we have
\begin{equation}\label{iso-est3}
\|u\|_H+|p|_{\overline V}+\eps\|p\|_V\le C
\sup_{(v, q)\in H\x V}\frac{a(u, v)+b(v, p)
-b(u, q)+\eps^2c(p, q)}
{\|v\|_H+|q|_{\overline V}+\eps\|q\|_{V}}.
\end{equation}

If $W$ is not dense in $V$, we let $\overline W$ be the closure of $W$ in $V$, and
decompose $V$ orthogonally as $V=\overline W\oplus\overline W^\perp$. Then any $q\in V$ can be written as
$q=q_W+q_\perp$ such that $q_W\in\overline W$ and $q_\perp\in\overline W^\perp$. We have $b(v, q)=b(v, q_W)$ and $|q|_{\overline V}=|q_W|_{\overline V}$
and $\|q\|^2_V=\|q_W\|^2_V+\|q_\perp\|^2_V$.
For any $u\in H$ and $p\in V$, it follows from \eqref{iso-est3} that
\begin{equation*}
\|u\|_H+|p_W|_{\overline V}+\eps\|p_W\|_V
\le C
\sup_{v\in H, q_W\in \overline W}\frac{a(u, v)+b(v, p_W)-
b(u, q_W)+\eps^2c(p_W, q_W)}{\|v\|_H+|q_W|_{\overline V}+\eps\|q_W\|_V}.
\end{equation*}
Thus
\begin{multline*}
\|u\|_H+|p|_{\overline V}+\eps\|p\|_V\le C
(\|u\|_H+|p_W|_{\overline V}+\eps\|p_W\|_V+\eps\|p_\perp\|_V)\\
\le C
\sup_{v\in H, q_W\in \overline W}\frac{a(u, v)+b(v, p_W)-
b(u, q_W)+\eps^2c(p_W, q_W)}{\|v\|_H+|q_W|_{\overline V}+\eps\|q_W\|_V}+C\eps\|p_\perp\|_V\\
\le C
\sup_{v\in H, q_W\in\overline W, q_\perp\in \overline W^\perp}\frac{a(u, v)+b(v, p_W)-
b(u, q_W)+\eps^2c(p_W, q_W)+\eps^2c(p_\perp, q_\perp)}{\|v\|_H+|q_W|_{\overline V}+\eps\|q_W\|_V+\eps\|q_\perp\|_V}\\
\le C\sup_{v\in H, q\in V}\frac{a(u, v)+b(v, p)-
b(u, q)+\eps^2c(p, q)}{\|v\|_H+|q|_{\overline V}+\eps\|q\|_V}.
\end{multline*}
\end{proof}

\end{document}